\newtheorem{theo}{Theorem}
\newtheorem{pro}{Proposition}[section]
\newtheorem{lem}[pro]{Lemma}
\newtheorem{coro}[pro]{Corollary}
\newtheorem{remark}[pro]{Remark}
\newtheorem{defi}[pro]{Definition}
\def\Xint#1{\mathchoice
   {\XXint\displaystyle\textstyle{#1}}%
   {\XXint\textstyle\scriptstyle{#1}}%
   {\XXint\scriptstyle\scriptscriptstyle{#1}}%
   {\XXint\scriptscriptstyle\scriptscriptstyle{#1}}%
   \!\int}
\def\XXint#1#2#3{{\setbox0=\hbox{$#1{#2#3}{\int}$}
     \vcenter{\hbox{$#2#3$}}\kern-.5\wd0}}
\def\dashint{\Xint-}
\DeclareMathOperator{\supp}{Supp}
\def\({\left(}
\def\){\right)}
\def\1{\mathbf{1}}
\def\a{\alpha}
\def\curl{{\rm curl\,}}
\def\dr{{\delta_\mr}}
\def\div{\mathrm{div} \ }
\def\dt0{{{\frac{d}{dt}}_{|t=0}}}
\def\D{\displaystyle}
\def\E{\mathbb{E}}
\def\ep{\varepsilon}
\def\hal{\frac{1}{2}}
\def\indic{\mathbf{1}}
\def\loc{{\text{\rm loc}}}
\def\l|{\left|}
\def\mc{\mathbb{C}}
\def\mr{\mathbb{R}}
\def\mz{\mathbb{Z}}
\def\mo{{\mu_0}}
\def\nab{\nabla}
\def\np{\nab^{\perp}}
\def\p{\partial}
\def\ro{\rho}
\def\r|{\right|}
\def\sm{\setminus}
\def\supp{\text{Supp}}
\def\T{\mathbb{T}}
\def\X{\mathcal{X}}
\def\Im{Im}
\def\Re{Re}
\def\vp{\varphi}
\def\V{\mathrm{Var}}
\def\z{\zeta}
\def\Wc{\mathcal{W}_N}
 \numberwithin{equation}{section}
\title{Renormalized energy concentration in random matrices }
\author{Alexei Borodin and Sylvia Serfaty}
\begin{document}

\maketitle

\begin{abstract}
We define a ``renormalized energy" as an explicit functional on
arbitrary point configurations of constant average density in the plane and on the real line. The definition is inspired by ideas of 
\cite{ss1,ma1d}.
Roughly speaking, it is obtained by subtracting two leading terms from the Coulomb potential on a 
growing number of charges. The functional is expected to be a good measure of disorder of a 
configuration of points. We give certain formulas for its expectation for general stationary 
random point processes. For the random matrix $\beta$-sine processes on the real line ($\beta=1,2,4$), 
and Ginibre point process and zeros of Gaussian analytic functions process in the plane, 
we compute the expectation explicitly. Moreover, we prove that for these processes the variance of the 
renormalized energy vanishes, which shows concentration near the expected value.
We also prove that the $\beta=2$ sine process minimizes the renormalized energy in the
class of determinantal point processes with translation invariant correlation kernels.
\end{abstract}

\noindent
{\bf keywords:} renormalized energy, random point processes, random matrices, sine processes, Ginibre ensemble, zeros of Gaussian analytic functions.\\
{\bf MSC classification:}  60G55, 60G10, 15B52 

\tableofcontents

\section{Introduction}

The aim of this paper is to  introduce and compute a function, called the  ``renormalized energy",  for some specific random point processes that arise in random matrix models, and in this way to associate to each of these processes a unique number, which is expected to measure its ``disorder".

Our ``renormalized energy", that we denote $\mathcal{W}$, is defined over configurations of points lying either on the real line or on the plane,  as the limit  as $N \to \infty$ of
\begin{align*}
\Wc(\{a_i\})  &  = - \frac{1}{N} \sum_{i\neq j, a_i, a_j \in  [0, N]} \log \left|2\sin \frac{\pi(a_i-a_j)} N\right|  + \log N\qquad \text{in dimension 1},\\
\Wc(\{a_i\}) &  = \frac{1}{2\pi N^2 }  \sum_{i\neq j, a_i, a_j \in [0,N]^2} E_N(a_i-a_j) + \log \frac{N}{2\pi \eta(i)^2}\qquad \text{in dimension 2},\end{align*} where $E_N$ is an explicit Eisenstein series, and $\eta$ is the Dedekind Eta function.

This definition is inspired by that of the ``renormalized energy", denoted $W$, 
  introduced by Sandier and the second author  in \cite{ss1} in the case of points in the plane and in \cite{ma1d} in the case of points on the real line. The definitions for $W$ and $\mathcal{W}$ coincide when the point configuration has some periodicity (this is where our new definition originates), and in that case they amount to computing a sum of pairwise interactions
$$\sum_{i\neq j} G(a_i-a_j)$$ where $a_i$ are the points and $G$ is a suitable logarithmic kernel (the Green's function on the underlying torus);  however they are not identical in general. We will give more details on the connection in Section \ref{secdefW}.

 %  For example, the definition of $W$ in the general case was not given in \cite{ss1,ma1d} as a function of the points themselves, but rather of the ``current"  $j=-\np H$  that they generate
%via  a solution $H$ to
%$$ - \Delta H = 2\pi \left( \sum_i \delta_{a_i}-1\right)$$ for points in the plane (see below an analogous formula for points on the real line)
%when assuming that the configuration of points has an asymptotic average density, which we can take without loss of generality to be equal to $1$. 

%It is a way of computing a total logarithmic interaction of an infinite set of points in the plane, or in the line, respectively. In some instances (such as when the configuration of points admits some periodicity), computing  $W$  amounts to

%As said above, we wish here to compute explicit values of $W$ for random point processes. For that we need a definition of $W$ which only depends on the points, and this will be addressed in Section \ref{secdefW}.
%We are led to a slightly different definition;

In \cite{ma1d} it is  shown that in dimension 1,  $W$ is bounded below and its minimum is achieved at the perfect lattice $\mz$. In dimension 2, the situation is more complex; it is also shown in \cite{ss1} that the minimum of $W$ is achieved, but it is only conjectured that this minimum value is achieved at the perfect triangular lattice or ``Abrikosov lattice" according to the terminology of the physics of superconductors (which was the first main motivation for  $W$ in \cite{ss1} where it was introduced). This conjecture is supported by the result that, among configurations of points which form a perfect lattice (of fixed volume), the renormalized energy is minimal if and only if the lattice is the perfect triangular lattice, i.e. with 60$^\circ$ angles (that result is shown in \cite{ss1} based on the use of  modular functions and results in number theory).

It is thus natural to think of $W$ or $\mathcal{W}$ as a way to measure the disorder of a configuration of points. With this outlook, in dimension 1 the lattice $\mz$ is the most ordered configuration of points with prescribed density 1, while in dimension 2, it is expected to be the triangular lattice (which is better ordered than any other lattice, say the square lattice).
In addition, due to its logarithmic nature, $\mathcal{W}$ has some nice scaling and additive properties, which we believe make it a very good object.
A further motivation for choosing to study  $\mathcal{W}$ as opposed to any other total pairwise interaction function is 
that, as seen in \cite{ma2d} and \cite{ma1d}, $W$ arises very naturally from the statistical mechanics of Coulomb or log gases, which contain as particular cases the Ginibre and GOE/GUE/GSE ensembles of random matrices.
In \cite{ss1}, $W$ was introduced and derived in the context of the minimization of the  Ginzburg-Landau model of superconductivity. In \cite{ma2d} it was derived as a sort of limiting interaction energy for two dimensional Coulomb gases, and similarly in \cite{ma1d} with log gases.  These works are  based on analysis and energy estimates (upper and lower bounds). Both the questions we pursue here and the methods we use are quite different: they aim at obtaining explicit formulas for specific random matrix models. In particular, this is a way to compute some interesting statistics over random matrix eigenvalues, as initiated by Dyson-Mehta \cite{dysonmehta}. We will comment more on this just below.

\medskip

Let us now briefly introduce the notion of a  random point process.

A (simple) random point process is a probability measure on the set of all locally finite collections of (mutually distinct) points in the space, 
cf. e.g. \cite{dvj}.
It can also  be  viewed as a random  measure of the form $\xi(\omega)=\sum_{p \in \Lambda} \delta_p$, with the points $p$ distinct and $\Lambda $ discrete. 

Random point processes are essentially characterized by their ``$k$-point correlation functions" $\ro_k(x_1, \dots, x_k)$, which give the probability densities of finding $k$ points at the  locations $x_1, \dots, x_k$. 
We will normalize our processes so that the average number of points per unit volume is always $1$, which is equivalent to $\ro_1(x)\equiv 1$.

Perhaps the most famous random point process is the Poisson process, characterized by the facts that the number of points in disjoint subsets are independent, and the number of points in any finite volume subset of the space follows a Poisson distribution with parameter equal  to the volume of the set with respect
to a reference measure.
An important class of point processes is that of determinantal point processes, see \cite{Sos00}, \cite{Lyo03}, \cite{Joh05}, \cite{Kon05},
\cite{Hou06}, \cite{Sos06}, \cite{Bor11} and references therein. That class is characterized by the fact that  the $k$-point correlation functions are given by symmetric minors of a (correlation) kernel, cf. Section \ref{correl}.

It is easy to see that $\lim_{N\to\infty}\Wc=+\infty$ for the translation invariant Poisson process, which
means that it is ``too chaotic'' from the point of view of the renormalized energy. Let us list the (stationary) 
processes for which we show that $\mathcal W$ provides more meaningful information.

We will be interested in one dimension in the $\beta$-sine processes ($\beta=1,2,4$) which arise as the local limit of the law of eigenvalues in random matrix ensembles 
with orthogonal, unitary, and symplectic symmetry groups (they are determinantal for $\beta=2$ and Pfaffian otherwise). In two dimensions we will examine the ``Ginibre" point process, which is also a determinantal process arising as the local limit of the law of eigenvalues of matrices from the complex Ginibre ensemble (i.e. square matrices with complex Gaussian iid entries), for further reference, see \cite{forrester,agz,mehta}; as well as  the random process of zeros of random Gaussian analytic functions, often denoted GAF, whose description can be found in \cite{Hou06}.

As our processes are always translation invariant, the  $2$-point correlation function can always be written in the form $\ro_2(x,y)=1-T_2(x-y)$ for the
2-point \emph{cluster function} $T_2$ (we will come back to this notation in Sections \ref{correl} and   \ref{sec4}).

The main results we obtain are the following:
\begin{itemize}
\item For general stationary processes we identify sufficient conditions on the process and its $2$-point correlation function $\ro_2$  for the existence of $\lim_{N\to \infty} \E \Wc$, 
 and give an explicit  formula in terms of $\ro_2$ which is  (up to constants)
$$\lim_{N\to \infty}\E\Wc= \int_{\mr^d} \log |x|T_2(x)\, dx,$$ with $d=1 $ or $2$ according to the dimension.
\footnote{Interestingly enough, the sufficient conditions involve the equality $\int_{\mr^d} \log |x|T_2(x)\, dx=1$
that we check for the above mentioned processes. We expect
it to hold for general $\beta$-sine processes but we do not see an \emph{a priori} reason for that. 
It is also not clear to us whether this condition has a physical meaning.}

\item We apply this formula to the specific point processes mentioned above.
\item For the specific point processes above, we explicitly compute  the limit of the  variance of $\Wc$ and obtain that it is $0$. This implies that for such processes, $\Wc$ concentrates around its expected value, and converges in probability to $\lim\E\Wc$ as $N \to \infty$.
\item We prove that in the class of determinantal point processes with translation invariant kernels in dimensions
1 and 2, $\lim_{N\to \infty}\E\Wc$ is minimized by the processes whose correlation kernel is the Fourier transform of 
the characteristic function of the ball.  A complete physical interpretation of this fact seems to be missing. 
 In dimension 1 the optimization gives the $\beta=2$ sine process, which can be seen as a heuristic
explanation of the fact that this process is the universal local limit of $\beta=2$ random matrix ensembles.
Indeed, such a local limit has to be translation invariant and determinantal, and it is natural to expect that it also minimizes an appropriate energy functional.  In other dimensions,  point processes with such  specific kernels called ``Fermi-sphere point processes" appear in \cite{t} as higher-dimensional analogues of the sine process.

\end{itemize}
For our set  of specific processes, we thus show that we can attribute to the process a unique number, which we compute explicitly. Whenever these numbers are distinct this implies that  the processes are mutually singular (as measures
on the space of point configurations).
 Moreover, we check that the processes that are expected to have the highest level of ``order" or rigidity indeed have a lower value of $\lim_{N\to \infty} \E\Wc$.
For example for the $\beta$-sine processes we find 
$$\begin{array}{ll}
 \lim_{N\to \infty} \mathbb{E}\Wc= 2-\gamma - \log 2  &\quad  \text{for } \ \beta=1\\
 \lim_{N\to \infty} \mathbb{E}\Wc= 1-\gamma & \quad \text{for } \ \beta=2\\
\lim_{N\to \infty} \mathbb{E}\Wc=  \frac{3}{2}- \log 2 - \gamma & \quad \text{for } \ \beta=4,\end{array}$$
 where $\gamma$ is the Euler constant. These three numbers  form a decreasing sequence, as can be expected. 

 In two dimensions, we obtain that the Ginibre process has a higher $\lim_{N\to \infty}  \Wc$, hence less rigidity, than that of zeros of Gaussian 
analytic functions, in agreement with the results and philosophy of \cite{gnps}.
 
The values of $\lim \mathbb{E}\Wc$ for the $\beta=1,2,4$ sine processes above equal twice the thermodynamic
``energy per particle'' or ``mean internal energy'' for the log-gas with infinitely many particles, as obtained by Dyson in 1962 \cite[I.IX, III.VI]{dyson}. 
This is not surprizing as our definition of $\Wc$ essentially coincides with that of Dyson \cite[I.VI]{dyson} once the points on the interval $[0, N]$ 
are identified with points on a circle of same length. Furthermore, one can use
this fact to infer finer asymptotic properties of $\Wc$ as follows (we are very grateful to Peter 
Forrester for the idea).
If, instead of considering growing windows, one approximates the $\beta$-sine 
processes by the \emph{circular ensembles} of particles on the unit circle with joint probability density 
$\prod_{i<j}|z_i-z_j|^\beta$, one observes that in rescaled variables, $\Wc$ is simply $-2N^{-1}\sum_{i\ne j}\log|z_i-z_j|+\log N$,
and its characteristic function can be immediately obtained from Selberg's formula for the partition function:
$$
\mathbb{E} \exp(it\,\Wc^\mathrm{circular})=N^{it}\,\frac{Z(\beta-\frac{2it}N)}{Z(\beta)}\,, \qquad Z(\beta)=\frac{\Gamma(1+\frac{\beta N}2)}
{(\Gamma(1+\frac\beta 2))^N}\,,
$$
see e.g.
\cite[Section 4.7.2]{forrester} for the formula for $Z(\beta)$. 
Using Stirling's formula it is not hard to see that for any $\beta>0$ and $t\in\mathbb{R}$
$$
\lim_{N\to\infty}\mathbb{E} \exp\left(itN^{1/2}\left(\Wc^\mathrm{circular}-u(\beta)\right)\right)=\exp\left(-\frac{v(\beta)t^2}2\right)
$$
with 
$$
u(\beta)=\Psi\left(1+\frac\beta2\right)-\log\left(\frac\beta 
2\right),\qquad v(\beta)=\frac 2\beta -\Psi'\left(1+\frac\beta 2\right),\qquad \Psi(z)=\frac{\Gamma'(z)}{\Gamma(z)}\,,
$$
and the expression of $u(\beta)/2$ through the partition function $Z(\beta)$ coincides with that of the thermodynamic energy per particle. 
By L\'evy's continuity theorem this implies the central limit theorem
$$
\lim_{N\to\infty}\mathrm{Prob} \left\{\frac{\Wc^{\mathrm{circular}}-u(\beta)}{\sqrt{v(\beta)/N}}\le s\right\} =\frac 1{\sqrt{2\pi}}
\int_{-\infty}^s e^{-x^2/2}dx, \qquad s\in\mathbb{R}.
$$

It is natural to conjecture that the same central limit theorem holds for $\Wc$ on general $\beta$-sine processes
(constructed in \cite{VV}) as well, in particular 
providing asymptotic values $\lim\E\Wc=u(\beta)$ and $\lim N\cdot\mathrm{Var(\Wc)}=v(\beta)$ 
for any $\beta>0$. At the moment we do not know how to prove it,  although
in the $\beta=1,2,4$ cases one may hope to obtain a proof via controlling the asymptotics
of moments of $\Wc$ through explicit formulas for the correlation functions.

In the follow-up paper \cite{dysonmehta}, prompted by the wish to analyze experimental data, Dyson and Mehta 
looked for a way of approximating the energy per particle, in the case $\beta=1$, by averaging a statistic 
of the form $\sum F(a_i,a_j)$ over windows of increasing length in the random matrix spectrum. The concentration
requirement of the asymptotically vanishing variance led them to the conclusion that with their choice
of $F$, the statistic had to be corrected with further interaction terms \cite[Eq. (109)]{dysonmehta}. 
Our renormalized energy $\Wc$ seems to be a better solution to Dyson-Mehta's problem, although one
should note that Dyson-Mehta's statistic is purely local, while $\Wc$ involves long range
interaction (between leftmost and rightmost particles in $[0,N]$).

\medskip
   
 The paper is organized as follows:
 In Section \ref{secdefW} we give the precise definitions of $W$, 
 the context about it  and its connection to Coulomb energy, from \cite{ss1,ma1d}. This leads us to the  definition of $\Wc$. In Section \ref{sec3} we compute  limits of expectations of $\Wc$: First on the real line and for general processes, then 
for the $\beta=1,2,4$ sine processes, then in the plane and for general processes, finally for the explicit Ginibre and zeros of GAF processes. In Section \ref{optim} we find minimizers of $\lim \Wc$  
among determinantal processes with translation-invariant correlation kernels. In Section~\ref{sec4} we compute the limit variance of $\Wc$  for our specific processes (and show it is $0$).
In Section~\ref{sc:misc} we gather some miscellaneous computations: the effect on $\lim \E \Wc$   of superposition and decimation of processes and the computation of expectation of $\lim \E \Wc$ for the $\beta=2$ discrete sine processes.

\vskip .5cm
{\bf Acknowledgements:} We are very grateful to Peter Forrester for his idea of using circular ensembles for the
heuristics above, and also for drawing our attention to \cite{jancovici}.  We also thank the anonymous referee for a careful reading and very interesting comments.
A. B.  thanks  Alan Edelman for help with numerics  and S. S. thanks  C. Sinan G\"unt\"urk for helpful discussions related to Section \ref{optim}.
A. B. was  partially supported by NSF grant DMS-1056390, S. S. was supported by a EURYI award. We also thank the MSRI for support to attend a workshop where this work was initiated.

\section{Definitions of $W$ and $\Wc$} \label{secdefW} The aim of this section is to present a definition of  $W$   for a configuration of points  either on the line or in the plane, which is directly deduced from that of  \cite{ss1,ma1d}, but depends only on the data of the points. We do not attempt here to fully and  rigorously connect the next definition with that of \cite{ss1,ma1d} (since we believe it presents some serious technical difficulties) but the link will be readily apparent.

We start by recalling the definitions from \cite{ss1}, but rather in the form presented in \cite{ma2d}.
\subsection{Definition of $W(j)$ in the plane}

In \cite{ss1}, $W$ was
 associated to a discrete set of points (with asymptotic density) $m$ in
the plane, via a vector field $j$:  if  $j$  is a vector field in $\mr^2$ satisfying
\begin{equation}\label{eqj}
\curl j = 2\pi (\nu - m), \qquad \div j=0,\end{equation} where $m$ is a positive number (corresponding to the average point density) and  $\nu$
has the form
\begin{equation}\label{eqnnu}
\nu= \sum_{p \in\Lambda} \delta_{p}\quad \text{ for
some discrete set} \  \Lambda\subset\mr^2,\end{equation} then
 for any function $\chi$ we define
\begin{equation}\label{WR}W(j, \chi): = \lim_{\eta\to 0} \(
\hal\int_{\mr^2 \backslash \cup_{p\in\Lambda} B(p,\eta) }\chi |j|^2
+  \pi \log \eta \sum_{p\in\Lambda} \chi (p) \).
\end{equation}

\begin{defi}\label{defA} Let $m$ be a nonnegative number.
Let  $j$
be a vector field in $\mr^2$. We say $j$
belongs to the admissible class
$\mathcal{A}_m $ 
 if \eqref{eqj}, \eqref{eqnnu}  hold  %with $m(x)=m$
and
\begin{equation}\label{densbornee}
\frac{    \nu (B_R )  } {|B_R|}\quad \text{ is bounded by a constant independent of $R>1$}.
\end{equation}
 \end{defi}

\begin{defi}
The ``renormalized energy" $W(j)$ relative to the family of balls (centered at the origin) $\{B_R\}_{R>0}$  (and the number  $m$) in $\mr^2$ is  defined for $j \in \mathcal{A}_m$ by
\begin{equation}\label{WU} W(j):= \limsup_{R \to \infty}
\frac{W(j, \chi_{R})}{|B_R|} ,
\end{equation}
where
 $\chi_{R}$ denotes  positive  cutoff functions satisfying, for some
constant $C$ independent of $R$,
\begin{equation}
\label{defchi} |\nab \chi_{R}|\le C, \quad \supp(\chi_{R})
\subset B_R, \quad \chi_{R}(x)=1 \ \text{if } d(x, (B_R)^c) \ge
1.\end{equation}
\end{defi}

 Note that by scaling, the density of points can be changed to any, and the rule for that change of scales is
  \begin{equation}\label{chscale}
  W(j)= m \(W(j') - \frac{\pi}{2} \log m\),\end{equation}
where  $\nu$ has density $m$ and $j'= \frac{1}{\sqrt{m}}j (\frac{\cdot}{\sqrt{m}})$ (hence the set $\nu'$ has density $1$).

This function was first  introduced in \cite{ss1} and derived as a limiting interaction
energy for vortices of Ginzburg-Landau configurations.
Independently of Ginzburg-Landau it can be viewed
as a Coulombian interaction energy for an infinite number of points in the plane, computed via a renormalization. Many of its properties are stated in \cite{ss1}. In \cite{ma2d} it was directly connected to 2D log gases. We will give more details and properties of $W$ in Section~\ref{sec:background}.

\subsection{Definition of $W(j)$ on the line}
In \cite{ma1d} a one-dimensional analogue to the above definition is introduced for the study of  1D log gases, we now present it.
The renormalized energy between points  in 1D is obtained by ``embedding" the real line in the plane and computing the renormalized energy in the plane, as defined in \cite{ss1}.
More specifically, we introduce the following definitions:

$\mr$ denotes the set of real numbers, but also the real line of the plane $\mr^2$ i.e. points of the form $(x,0)\in \mr^2$.   For the sake of clarity, we denote points in $\mr$ by the letter $x$ and points in the plane by $z=(x,y)$. For a function $\chi $  on $\mr$, we define its natural extension $\bar{\chi}$ to  a function on $\mr^2$ by $\bar{\chi}(x,y)=\chi(x)$.
$I_R$  denotes the interval $[-R/2, R/2]$ in $\mr$.

$\dr$ denotes the measure of length on  $\mr$ seen as embedded in $\mr^2$, that is
$$\int_{\mr^2} \varphi  \dr  = \int_\mr \varphi(x, 0)\, dx$$
for any smooth compactly supported test function $\vp $ in $\mr^2$.
This measure can be multiplied by bounded functions on the real-line.

\begin{defi}For any function $\chi $ on $\mr$, and any function $h$ in $\mr^2$  such that
\begin{equation}\label{eqj1}
-\Delta h = 2\pi (\nu - m \dr) \qquad \text{in } \ \mr^2\end{equation} where
 $m$ is a nonnegative number  and $\nu$ has the form
\begin{equation} \label{eqnnu1}\nu=  \sum_{p \in\Lambda} \delta_{p}\quad \text{ for
some discrete set of points of  $\mr$} ,\end{equation}
 we denote $j=-\np h :=(\p_2 h, - \p_1 h)$ and
\begin{equation}\label{WR1}W(j, \chi): = \lim_{\eta\to 0} \(
\hal\int_{\mr^2 \backslash \cup_{p\in\Lambda} B(p,\eta) }\bar{\chi}
|j|^2 +  \pi \log \eta \sum_{p\in\Lambda} \bar{\chi} (p) \),
\end{equation}
where $\bar{\chi}$ is the natural extension of $\chi$.
\end{defi}

\begin{defi}\label{defA} Let $m $ be a nonnegative number.
 We say $j=-\np h$
belongs to the admissible class
$\mathcal{A}_m $ if \eqref{eqj1}, \eqref{eqnnu1} hold
and
\begin{equation}\label{densbornee}
\frac{    \nu (I_R )  } {R}\quad \text{ is bounded by a constant independent of $R$}.
\end{equation}
 \end{defi}
We  use the
notation $\chi_{R}$ for positive  cutoff functions over  $\mr$ satisfying, for some
constant $C$ independent of $R$,
\begin{equation}
\label{defchi1} |\nab \chi_{R}|\le C, \quad \supp(\chi_{R})
\subset I_R, \quad \chi_{R}(x)=1 \ \text{if } |x|<\frac{R}{2}-1 .\end{equation}

\begin{defi}The renormalized energy $W$ is defined,
for $j \in \mathcal{A}_m$,  by
\begin{equation} \label{Wroi1} W(j):= \limsup_{R \to \infty}
\frac{W(j, \chi_{R})}{R} .
\end{equation}
\end{defi}

Note that while $W$ in 2D can be viewed as a renormalized way of computing $\|h\|_{H^1(\mr^2)}$, in 1D it amounts rather to a renormalized computation of $\|h\|_{H^{1/2}(\mr)}$.

In one dimension, the formula for change of scales is
\begin{equation}\label{minalai1}
W(j)= m \(W(j') - \pi \log m\).\end{equation}
where $j$ corresponds to density $m$ and $j'$ to  $m=1$.

\subsection{Background}\label{sec:background}
We recall here some properties and background from \cite{ss1,ma1d}. 

\begin{itemize}

\item[-] Since in the neighborhood of $p\in\Lambda$ we have $\curl j = 2\pi\delta_p - m(x)$, $\div j = 0$, we have near $p$ the decomposition $j(x) = \nab^\perp \log|x-p| + f(x)$ where $f$ is locally bounded, and it easily follows that the limits \eqref{WR}, \eqref{WR1} exists.  It also follows that $j$ belongs to $L^p_\loc$ for any $p<2$.
%We recall the value of $W$ for all lattices
%(see \cite{ss1}, Lemma 3.3):
%we parametrize the lattices by the fundamental cell of their dual lattice and denote its sides by $\frac{1}{\sqrt{b}}(1,0)$ and $\frac{1}{\sqrt{b}}(a,b)$, where $a,b$ are arbitrary real numbers. We denote by ${\cal L}(a,b)$ the lattice in $\mr^2$ whose dual lattice has this fundamental cell.
%Then
%$$W({\cal L}(a,b)) = -\pi \log (\sqrt{2\pi b}\, |\eta(a+ib)|^2)- \frac{\pi}{2}\log 2\pi.$$
%(To get to this formula from Lemma 3.3 \cite{ss1} we have to use the formula \eqref{chscale} because of the slightly different normalization we use here)

\item[-] Because the number of points is infinite,
 the interaction over large balls needs to be normalized
 by the volume as in a thermodynamic limit, and thus  $W$ does not feel compact perturbations of the configuration of points. Even though the interactions are long-range, this is not difficult to justify rigorously.
\item[-] The  cut-off function $\chi_R$ cannot simply be replaced by the characteristic function of  $B_R$ because for every $p\in\Lambda$
$$\lim_{\substack{R\to|p|\\R<|p|}} W(j,\indic_{B_R}) = +\infty,\quad \lim_{\substack{R\to|p|\\R>|p|}} W(j,\indic_{B_R}) = -\infty.$$
\item[-]In dimension 1, there is a unique $h$ satisfying \eqref{eqj1} and for which $W(- \np h)$ is finite. Thus $W$ amounts to a function of $\nu$ only.
\item[-] In dimension 2, there is not a unique $j$ for which $W(j)<\infty$  and it would be tempting to define $W$ as a function of $\nu$ only and not $j$ by minimizing over $j$, however we do not know how to prove that the resulting object has good properties such as measurability.
\end{itemize}

An important question is that of  characterizing the minimum and minimizers of $W$. 
 For the case of dimension 1, it is proven in \cite{ma1d} that the value 
of $W$ does not depend on the choice of $\chi_R$ satisfying \eqref{defchi1}, that $W$ is Borel-measurable over  $L^p_{loc}(\mr^2, \mr^2)$ for $p<2$, and that 
\begin{equation}\label{minen1d}
\min_{\mathcal{A}_1} W = - \pi \log (2\pi)\end{equation} is achieved for $\nu=\sum_{p\in \mz} \delta_p$.

 For the case of dimension 2,
it is proven in Theorem 1 of \cite{ss1}  that the value  of $W$ does not depend on $\{\chi_{B_R}\}_R$ as long as it satisfies \eqref{defchi}, that $W$ is Borel-measurable over  $L^p_{loc}(\mr^2, \mr^2)$, for $p<2$ and that $\min_{\mathcal{A}_m} W$ 
is achieved and finite. Moreover, there exists a minimizing sequence consisting of periodic vector fields.
The value of  the minimum of $W$  is not known, it is conjectured (see \cite{ss1}) that it is asymptotically equal to the value at the perfect triangular lattice.  This conjecture is supported by the fact that the triangular lattice can be proved to be the minimizer among lattice configurations of fixed volume.

In addition to arising from the study of Ginzburg-Landau \cite{ss1}, 
$W$ is naturally connected, as shown in \cite{ma2d,ma1d}, to Vandermonde factors  such as  $e^{-\frac{\beta}{2} w_n}$ with
\begin{equation}
\label{wn}
w_n (x_1, \dots, x_n)= -  \sum_{i \neq j} \log |x_i-x_j| +n  \sum_{i=1}^n V(x_i),\end{equation} where $x_i, \dots , x_n \in \mr^d$ with $d=1 $ or $2$,
 for any potential $V$ (for example $V$ quadratic).
To explain this, let us introduce some more notation. We set
\begin{equation}\label{Ib}
I (\mu)= \int_{\mr^d\times \mr^d} - \log |x-y|\, d\mu(x) \, d\mu(y) + \int_{\mr^d} V(x)\, d\mu(x). \end{equation}  It is well known in potential theory (see \cite{safftotik}) that, provided $V(x)- \log |x| \to +\infty $  as $|x|\to \infty$,  $I$ has a unique minimizer among probability measures, called the equilibrium measure -- let us  denote it $\mo$. It  is characterized by the fact that
there exists a constant $c$ such that
\begin{equation}\label{optcondmo}
U^{\mo} + \frac{V}{2}= c\  \text{ in the support of $\mo$, and} \  U^\mo + \frac{V}{2}\ge c \text{ everywhere}\end{equation}
 where for any $\mu$, $U^\mu$ is the potential generated by $\mu$, defined by
\begin{equation}
U^\mu(x)= - \int_{\mr^d} \log |x-y|\, d\mu(y).\end{equation}We may then set
 $\z= U^\mo + \frac{V}{2} - c$ where $c$ is the constant in \eqref{optcondmo}. It satisfies
\begin{equation}\label{eqz}
\left\{\begin{array}{rlll}
 \z &=& 0 & \text{in $\supp(\mu_0)$}\\
\z&>& 0 & \text{in $\mr^d\sm \supp(\mu_0)$}\end{array}\right.\end{equation}
It is easy to check that $\z$ grows like $V$ at infinity.
The connection between $w_n$ and $W$ is given by the following exact ``splitting formula", valid in both 1 and 2 dimensions.
\begin{lem}[\cite{ma1d,ma2d}]\label{lem26}
For any $x_1, \dots, x_n\in \mr^d$,  $d=1$ or $d=2$,  the following holds
\begin{equation}\label{idwn}
w_n(x_1, \dots, x_n)= n^2 I (\mo)- \frac{n}{d} \log n + \frac{1}{\pi} W(-\np  H', \indic_{\mr^d} )  +2 n \sum_{i=1}^n  \z (x_i)\end{equation}
where  $W$ is defined in \eqref{WR} or \eqref{WR1} respectively,  and  where
\begin{equation}\label{2.2}
 H= - 2\pi\Delta^{-1} \(\sum_{i=1}^n \delta_{x_i} - n \mu_0 \) \ \text{in } \ \mr^2,\quad H'(n^{1/d}  x) = H(x).\end{equation} where the equation is solved in $\mr^2$ and
  $\mu_0$ is naturally extended  into a measure on $\mr^2$ if $d=1$. (Note that $\Delta^{-1}$ is the convolution with $1/(2\pi)\log |\cdot|$.)
%$\int_{ren}|\nab H|^2  $ denotes
%\begin{equation}
%\label{ren}
% \int_{ren}|\nab H|^2= \lim_{\eta\to 0}\int_{\mr^2\sm \cup_{i=1}^n B(a_i, \eta)} |\nab H|^2 + 2\pi n \log \eta.\end{equation}
\end{lem}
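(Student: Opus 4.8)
The plan is to prove the splitting formula \eqref{idwn} by a direct computation that expands the logarithmic interaction $w_n$ around the equilibrium measure $\mu_0$, then rescales to density $1$, and finally recognizes the remainder as the renormalized energy $W$ evaluated with the characteristic function cutoff. Throughout I keep $d=1$ or $2$ and write $U^{\mu_0}$, $\zeta$ as in the excerpt.

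\medskip

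\textbf{Step 1: Expansion around the equilibrium measure.} Write $\delta_{x_i}= (\delta_{x_i}-\mu_0)+\mu_0$ and expand
\[
-\sum_{i\neq j}\log|x_i-x_j| = -\ioo \log|x-y|\,d\Big(\sum_i \delta_{x_i}\Big)(x)\,d\Big(\sum_j\delta_{x_j}\Big)(y) + (\text{diagonal terms}),
\]
where the diagonal, i.e.\ $i=j$, contribution $-\sum_i \log 0$ is of course formally infinite and is exactly what the renormalization in \eqref{WR}, \eqref{WR1} is designed to absorb; this is why one must work with $H$ and the truncated integral rather than the bare sum. The clean way is: set $H=-2\pi\Delta^{-1}(\sum_i\delta_{x_i}-n\mu_0)$ in $\mr^2$ (with $\mu_0$ extended to $\mr^2$ by $\dr$ when $d=1$), so $-\Delta H = 2\pi(\sum_i\delta_{x_i}-n\mu_0)$. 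Using $\curl(-\np H)=-\Delta H$ (in $d=2$) resp.\ the $1$D analogue, and $U^{\mu_0}(x)=-\int\log|x-y|d\mu_0(y)$, one gets the algebraic identity
\[
-\sum_{i\neq j}\log|x_i-x_j| = n^2 I(\mu_0) - 2n\sum_i U^{\mu_0}(x_i) + (\text{renormalized self-interaction of } \textstyle\sum_i\delta_{x_i}-n\mu_0),
\]
where the renormalized self-interaction is precisely $\frac1\pi W(-\np H,\indic)$ after the limit $\eta\to0$ in \eqref{WR}/\eqref{WR1} is taken, the $\pi\log\eta$ counterterms cancelling the $n$ copies of the divergent diagonal. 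I would verify this by integrating by parts on $\mr^2\setminus\bigcup B(x_i,\eta)$: $\frac12\int|\,\np H|^2 = \frac12\int|\nabla H|^2$ picks up boundary terms $\sim \pi\log\eta$ per point plus the cross terms that reconstitute $\sum_{i\neq j}\log|x_i-x_j|$ and $\sum_i U^{\mu_0}(x_i)$ and $\iint\log|x-y|d\mu_0 d\mu_0$.

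\medskip

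\textbf{Step 2: Absorb the potential and form $\zeta$.} Add $n\sum_i V(x_i)$ to both sides. Since $\zeta = U^{\mu_0}+\tfrac V2 - c$ with $c$ the constant from \eqref{optcondmo}, we have $-2nU^{\mu_0}(x_i)+nV(x_i) = 2n\zeta(x_i) + 2nc$, so $-2n\sum_i U^{\mu_0}(x_i)+n\sum_i V(x_i) = 2n\sum_i\zeta(x_i) + 2n^2 c$. Now use the identity $I(\mu_0)= \int(U^{\mu_0}+\tfrac V2)d\mu_0 = \int(\zeta+c)d\mu_0 = c$ (because $\zeta=0$ on $\supp\mu_0$ and $\mu_0$ is a probability measure); hence the constant $n^2 I(\mu_0) + 2n^2 c$ reorganizes correctly, and collecting terms gives
\[
w_n(x_1,\dots,x_n) = n^2 I(\mu_0) + \frac1\pi W(-\np H,\indic) + 2n\sum_i\zeta(x_i),
\]
still \emph{before} rescaling.

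\medskip

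\textbf{Step 3: Rescaling to density $1$ and the $-\tfrac nd\log n$ term.} Set $H'(n^{1/d}x)=H(x)$ as in \eqref{2.2}; then $-\np H'$ is the vector field associated to the blown-up points $x_i' = n^{1/d}x_i$, which have asymptotic density $1$ (since $n\mu_0$ has total mass $n$ spread over a region of $d$-volume $\sim 1$, rescaling by $n^{1/d}$ in each coordinate gives density $1$). Here the scaling relations \eqref{chscale} in $d=2$ and \eqref{minalai1} in $d=1$ (equivalently, a direct change of variables in \eqref{WR}/\eqref{WR1}) convert $W(-\np H,\indic)$ into $W(-\np H',\indic_{\mr^d})$ up to the explicit factor, and the Jacobian/log-scaling produces exactly the $-\tfrac nd\log n$ correction: in $d=2$, $\frac12\int|\nabla H|^2 = \frac12\int|\nabla H'|^2$ is scale invariant but the $\eta$-renormalization shifts by $\pi\log(n^{-1/2})$ per point $\times\, n$ points $= -\tfrac n2\log n$, matching $-\tfrac nd\log n$ with $d=2$; the $d=1$ case is analogous with the $H^{1/2}$-type scaling giving $-n\log n = -\tfrac nd\log n$. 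Plugging this in yields exactly \eqref{idwn}.

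\medskip

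\textbf{Main obstacle.} The delicate point is Step 1: making the formal cancellation of the infinite diagonal rigorous, i.e.\ showing that the $\eta\to0$ limit in \eqref{WR}/\eqref{WR1} exists and that the boundary integrals around the small balls $B(x_i,\eta)$ contribute precisely $\pi\log\eta$ (to be killed by the counterterm) plus a term converging to the finite cross-interactions. This uses the local structure $-\np H = \np\log|x-x_i| + (\text{bounded})$ near each $x_i$ noted in the Background, but one must be careful that the ``bounded'' remainder is continuous enough at $x_i$ for the boundary term $\oint_{\partial B(x_i,\eta)} H\,\partial_\nu H$ to have the claimed limit, and that in $d=1$ the singular measure $\dr$ does not spoil the $\mr^2$ computation (it does not, since $\mu_0$ stays bounded). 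I would handle this exactly as in \cite{ma2d,ma1d}, citing those references for the analytic justification and presenting here only the algebraic bookkeeping of constants.
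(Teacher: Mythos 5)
Your strategy---expand the double sum around $n\mu_0$, identify the off-diagonal self-interaction of $\nu-n\mu_0$ with $\frac1\pi W(-\np H,\indic)$ via Green's formula on $\mr^2\setminus\bigcup B(x_i,\eta)$ with the $\pi\log\eta$ counterterm, then rescale by $n^{1/d}$ to produce the $-\frac nd\log n$---is exactly the route the paper takes, and your Steps 1 and 3 are sound in substance (the paper makes Step 1 precise by setting $H_i=H+\log|\cdot-x_i|$ and showing that both the off-diagonal double integral and the renormalized Dirichlet energy equal $\sum_i H_i(x_i)-n\int H\,d\mu_0$; your scaling count in Step 3 is correct).

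There is, however, a concrete error in the constant bookkeeping of Step 2: the identity $I(\mu_0)=\int(U^{\mu_0}+\tfrac V2)\,d\mu_0=c$ is false. One has $I(\mu_0)=\int U^{\mu_0}\,d\mu_0+\int V\,d\mu_0=c+\tfrac12\int V\,d\mu_0$, so $I(\mu_0)\neq c$ unless $\int V\,d\mu_0=0$. There are also sign slips: the expansion of the double integral produces $+2n\sum_i U^{\mu_0}(x_i)-2n^2\int U^{\mu_0}\,d\mu_0$ as the cross term (not $-2n\sum_i U^{\mu_0}(x_i)$), and the constant coming out of Step 1 is $-n^2\int U^{\mu_0}\,d\mu_0$, not $n^2 I(\mu_0)$. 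With the correct signs the constants do recombine into $n^2 I(\mu_0)$, because $2n^2c-n^2\int U^{\mu_0}\,d\mu_0=n^2 I(\mu_0)$ follows from $2c=2\int U^{\mu_0}\,d\mu_0+\int V\,d\mu_0$; but as written your identities do not close, and since the whole content of the splitting formula is the exact constant, this matters. The paper avoids evaluating $c$ altogether: it groups the linear terms as $2n\int(U^{\mu_0}+\tfrac V2)\,d(\nu-n\mu_0)$, substitutes $U^{\mu_0}+\tfrac V2=c+\zeta$, and uses that $\nu-n\mu_0$ has total mass zero (so the $c$-term vanishes) and that $\zeta=0$ on $\supp\mu_0$ (so the term reduces to $2n\sum_i\zeta(x_i)$). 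Adopting that grouping repairs Step 2 with no further computation.
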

\begin{proof}[Sketch of the proof]
We start by writing
$$w_n(x_1, \dots, x_n) =\int_{\triangle^c}\,- \log |x-y| \, d\nu(x)\, d\nu(y) + n\int V(x)\, d\nu(x)$$ where $\triangle $ denotes the diagonal of $(\mr^d)^2$ and $\nu = \sum_{i=1}^n \delta_{x_i}$. The idea is to compute the right-hand side by  splitting $\nu$ as $n \mo + \nu - n\mo$. This way,  using the fact that $\mo(\triangle) = 0$, we obtain
\begin{multline*}
w_n(x_1, \dots, x_n) = n^2 I(\mo)+  2n \int U^{\mo}(x)\, d(\nu - n \mo)(x)+ n\int V(x)\, d(\nu- n \mo)(x)\\+
\int_{\triangle^c}\,- \log |x-y| \, d(\nu - n \mo)(x)\, d(\nu- n \mo)(y) .
\end{multline*}
Since $U^{\mo} + \frac{V}{2}= c + \z$ and since $\nu$ and $n \mo$ have same mass $n$, we have
$$2n \int U^{\mo}(x)\, d(\nu - n \mo)(x)+ n\int V(x)\, d(\nu- n \mo)(x)=  2 n \int \z \,d(\nu - \mo)= 2n \int \z \,d\nu,$$
where we used  that  $\z=0$ on  the support  of $\mo$. Thus 
\begin{equation}\label{avantchvar}w(x_1, \dots, x_n) = n^2 I(\mo)+  2n \int \z \,d\nu 
+
\int_{\triangle^c}\,- \log |x-y| \, d(\nu - n \mo)(x)\, d(\nu- n \mo)(y) .
\end{equation}
We then claim that the last term in the right-hand side is equal to 
$\frac{1}{\pi} W(-\np H ,\indic_{\mr^d})$.
We define
$H_i(x) := H(x)+\log|x-x_i|$. We have $H_i = -\log\ast (\nu_i - n\mo)$, with $\nu_i = \nu-\delta_{x_i}$, and near $x_i$, $H_i$ is $C^1$. It is then not difficult to deduce 
\begin{equation}\label{rhsss}\int_{\triangle^c}\,- \log |x-y| \, d(\nu - n \mo)(x)\, d(\nu- n \mo)(y)= \sum_i H_i(x_i) - n\int H(x)\,d\mo(x).\end{equation}
On the other hand, by definition \eqref{WR}, \eqref{WR1} and using Green's formula, we have 
$$\frac{1}{2\pi} \int_{\mr^2 \sm\cup_i B(x_i,\eta)} |\nab H|^2 =\frac{1}{2\pi} \sum_i \int_{\p B(x_i,\eta)} H\frac{\p H}{\p n} + \hal\int_{\mr^2 \sm \cup_i B(x_i,\eta)}H\,d(\nu - n\mo).$$ Using the decomposition $H= H_i - \log |x-x_i|$ near each $x_i$, adding $n \log \eta$ and then letting $\eta \to 0$, we arrive at the same result as the right-hand side of \eqref{rhsss}. This establishes the claim, and  the final result then follows from the change of scales $x' = n^{1/d} x$ in  $W$.

\end{proof}

\subsection{Calculation for points on a torus in dimension two}

The reason why we need another definition here is that, as we already mentioned, we wish to define $W$ based on the knowledge of $\nu$, i.e. a set of points, alone. For a given $\nu$, there is no uniqueness of the $j$ satisfying \eqref{eqj}  or \eqref{eqj1} (in fact the indetermination is exactly a constant, see  \cite[Lemma 1.4]{ma2d}), and this makes it problematic to define $W$ for points only in a measurable manner.
However, if the configuration of points is periodic, then $W$ can be computed nonambiguously from $\nu$ only.

The formula for this in dimension 2 is given in \cite{ss1}.
 If the periodicity  is
 with respect to a  torus $\T$, and the number of points in that torus is  $n$ (denote them $a_1, \dots, a_n$) then there exists a unique (up to a constant) solution to
 \begin{equation}\label{Ha}
 -\Delta H_{\{a_i\}}= 2\pi \(\sum_{i=1}^n \delta_{a_i
 } - \frac{n}{|\T|} \) \quad \text{on} \ \T \end{equation}
 (i.e. a periodic solution) and we let \begin{equation}\label{ja}
 j_{\{a_i\}}= - \np H_{\{a_i\}}.\end{equation}  It was proved in \cite{ss1} that $W(j_{\{a_i\}})$ is the smallest of the $W(j)$ over all $j$ satisfying \eqref{eqj} which are  $\T$-periodic, and it was established
 (see formula (1.10)) that
  $$W(j_{\{a_i\}})= \hal \sum_{i \neq j} G(a_i-  a_j) + n c$$
  where $c$ is a  constant, and $G$ is the Green's function associated to the torus in the following way:
  \begin{equation}\label{green2d}-\Delta G= 2\pi\( \delta_0  -\frac{1}{|\T|} \) \qquad \text{in} \ \T.\end{equation}
   In \cite{ss1}, (1.11), an explicit formula is given for this:
\begin{equation}\label{formexpl}
W(j_{\{a_i\}} )= \hal \sum_{i \neq j} \sum_{p \in (\mz^2)^* \sm \{0\}} \frac{e^{2i\pi p \cdot (a_i- a_j)}  }{4\pi^2 |p|^2 } + \frac{n}{2}
 \lim_{x\to 0} \( \sum_{p \in (\mz^2)^*\backslash
\{0\} } \frac{ e^{2i\pi p \cdot x}}{ 4\pi^2 |p|^2}  + \log
|x|\). \end{equation}
which can in turn be expressed using Eisenstein series.

For simplicity we prefer to work with density $1$ in a square torus, which is then necessarily 
 $\T_N:=\mr^2 /(N\mz)^2$ where  $n=N^2$. Compared to \cite{ss1}, this will change the constants in the formulae.
So for the sake of clarity we  will  include below a complete proof of the following:

\begin{lem}\label{Wnper}
Let $a_1, \dots , a_n$ be $n=N^2$ points on $\T_N$. Let $j_{\{a_i\}}$ be the $\T_N$-periodic vector field associated through \eqref{ja} to the configuration of
 points $\{a_i\}$, extended by periodicity to the whole plane. Then $W(j_{\{a_i\}})$ as defined in \eqref{WU} is a function of $a_1, \dots, a_n$ only, which is equal to
\begin{equation}\label{Wtor}
W_N(a_1, \dots, a_n)= \frac{\pi}{N^2} \sum_{i \neq j} G(a_i-a_j) + \pi R \end{equation}
where $G$ is the unique solution of
\begin{equation}\label{G2d}
 -\Delta G=  2\pi\( \delta_0  -\frac{1}{N^2}\) \qquad \text{in} \ \T_N,\end{equation}
 with $\dashint_{\T_N} G(x)\, dx=0$; and $R$ is a constant, equal to $\lim_{x\to 0} \( G(x)+ \log |x|\)$.
\end{lem}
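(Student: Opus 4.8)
The plan is to reduce everything to the computation \eqref{WR} for the specific $\mathcal A_m$-element $j_{\{a_i\}}$, exploiting the $\T_N$-periodicity to replace integrals over $\mr^2$ by integrals over one fundamental domain. First I would observe that $j_{\{a_i\}}=-\np H_{\{a_i\}}$ with $H_{\{a_i\}}$ the $\T_N$-periodic solution of \eqref{Ha}, and that periodicity immediately gives the admissibility condition \eqref{densbornee} with $m=1$, so $W(j_{\{a_i\}})$ is well-defined by \eqref{WU}. The key structural fact, which I would justify as in \cite{ss1}, is that on $\T_N$ one has the representation $H_{\{a_i\}}(x)=\sum_{j}G(x-a_j)$ up to an additive constant (with $G$ as in \eqref{G2d}, normalized to mean zero), because the right-hand side of \eqref{Ha} is exactly $2\pi$ times $\sum_j\delta_{a_j}-n/|\T_N|$ and $G$ is the fundamental solution; uniqueness up to a constant is then what makes $W(j_{\{a_i\}})$ depend on the $a_i$ only.

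Next I would carry out the renormalized energy computation. Fix the cutoff $\chi_R$ and write, using Green's formula on $\T_N\setminus\cup_i B(a_i,\eta)$ (and then summing over the periodic copies inside $B_R$),
\begin{equation}
\frac{1}{2}\int_{\T_N\setminus\cup_i B(a_i,\eta)}|\nab H_{\{a_i\}}|^2
=\frac12\sum_i\int_{\p B(a_i,\eta)}H_{\{a_i\}}\frac{\p H_{\{a_i\}}}{\p n}
+\frac12\int_{\T_N\setminus\cup_i B(a_i,\eta)}H_{\{a_i\}}\,d\Big(2\pi\sum_j\delta_{a_j}-\frac{2\pi n}{|\T_N|}\Big).
\end{equation}
Near each $a_i$ one splits $H_{\{a_i\}}=H_i-\log|x-a_i|$ with $H_i$ of class $C^1$ at $a_i$, where $H_i(x):=H_{\{a_i\}}(x)+\log|x-a_i|$; then $H_i(a_i)=\sum_{j\ne i}G(a_i-a_j)+R$ with $R=\lim_{x\to0}(G(x)+\log|x|)$. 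Adding $\pi n\log\eta$ and letting $\eta\to0$, the divergent pieces cancel exactly (this is the usual renormalization), and the boundary integrals over $\p B(a_i,\eta)$ contribute the $H_i(a_i)$ terms; the bulk term $\int H_{\{a_i\}}\,d(n/|\T_N|)$ vanishes by the mean-zero normalization of $G$. One is left, per torus cell, with $W(j_{\{a_i\}},\indic_{\T_N})=\pi\big(\sum_{i\ne j}G(a_i-a_j)+nR\big)$. Finally, dividing by $|B_R|$ and letting $R\to\infty$: since the configuration is $\T_N$-periodic with $n=N^2$ points in a cell of area $N^2$, the limit in \eqref{WU} is exactly $\frac{1}{N^2}$ times the per-cell value, giving $W_N(a_1,\dots,a_n)=\frac{\pi}{N^2}\sum_{i\ne j}G(a_i-a_j)+\pi R$, which is \eqref{Wtor}.

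The step I expect to be the main obstacle is the rigorous passage from the cutoff $\chi_R$ version \eqref{WU} to the clean per-cell formula: one must show that replacing $\chi_R$ by $\indic_{B_R}$ and then by a union of full torus cells introduces only a boundary error of order $R$ (hence negligible after dividing by $|B_R|\sim R^2$), using the gradient bound \eqref{defchi}, the local $L^p_{\loc}$ control of $j$ away from the points, and the fact that $H_{\{a_i\}}$ and $|j_{\{a_i\}}|^2$ are bounded on compact sets away from $\cup\Lambda$ and periodic. This is exactly the type of estimate established in \cite{ss1} for general periodic $j$, and the same argument applies verbatim here; the rest is bookkeeping of constants coming from the choice of density $1$ and square torus $\T_N$.
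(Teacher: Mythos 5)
Your proposal is correct and follows essentially the same route as the paper: reduce \eqref{WU} to a single-cell computation using periodicity (as in \cite{ss1}, Proposition 3.1), represent $H_{\{a_i\}}(x)=\sum_j G(x-a_j)$, and perform the renormalized Green's-formula computation with the splitting $G(x)=-\log|x|+R(x)$, the mean-zero normalization of $G$ killing the bulk term. The only difference is one of emphasis: you flag the passage from $\chi_R$ to the per-cell value as the delicate step and outsource it to \cite{ss1}, exactly as the paper does.
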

\begin{proof} First, arguing as in \cite{ss1} Proposition  3.1, we have
\begin{equation}\label{f1} W(j_{\{a_i\}})= \frac{1}{|\T_N|}  \(\lim_{\eta \to 0} \hal \int_{\T_N \backslash \cup_{i=1}^n B(a_i, \eta)} |j_{\{a_i\}} |^2 +  \pi n \log \eta\),\end{equation} which means we reduce the computation in the plane to a computation on the torus.
 The next step is a renormalized computation \`a la \cite{bbh}, as skeched in the proof of Lemma \ref{lem26}.

 Solving for \eqref{Ha} via the Green function  \eqref{G2d}, in view of the translation invariance of the equation, we can choose
\begin{equation}\label{f2} H_{\{a_i\}}(x) = \sum_{i=1}^n G(x- a_i).\end{equation} Let us denote \begin{equation}\label{RG}
R(x)= G(x)+ \log |x|,\end{equation} it is well-known that   $R(x)$ is a continuous function.  We denote $R:=R(0)$.

Inserting \eqref{ja} into \eqref{f1}, we have
\begin{equation}\label{f3}
W(j_{\{a_i\}})= \frac{1}{|\T_N|}  \(\lim_{\eta \to 0} \hal \int_{\T_N \backslash \cup_{i=1}^n B(a_i, \eta)} |\nab H_{\{a_i\}} |^2 +  \pi n \log \eta\).\end{equation}
Using  Green's formula for integration by parts,  we have
$$\int_{\T_N \backslash \cup_{i=1}^n B(a_i, \eta)} |\nab H_{\{a_i\}} |^2= \int_{\T_N \backslash \cup_{i=1}^n B(a_i, \eta)} (- \Delta  H_{\{a_i\}} )  H_{\{a_i\}}  + \sum_{i=1}^n \int_{\p B(a_i, \eta)}   H_{\{a_i\}}  \frac{\p H_{\{a_i\}}  }{\p  n} .$$
Inserting  \eqref{Ha} and \eqref{f2}, we have
\begin{multline*}\int_{\T_N \backslash \cup_{i=1}^n B(a_i, \eta)} |\nab H_{\{a_i\}} |^2 = - \sum_{j=1}^n \int_{\T_N \backslash \cup_{i=1}^n B(a_i, \eta)} 2\pi G(x-a_j) \, dx \\+ \sum_{i =1}^n \sum_{j=1}^n \int_{\p B(a_i, \eta)} G(x-a_j) \frac{\p H_{\{a_i\}}  }{\p n} , \end{multline*} with $n$ the inner unit normal to the circles.
Using for the first term the fact that $G$ is chosen to have average zero, and   for the second term splitting $G(x- a_j)$ as $ -\log |x-a_j|+ R(x-a_j)$, using the continuity of $R(x)$ and $\lim_{\eta \to 0} \int_{\p B(a_i, \eta)} \frac{\p H_{\{a_i\}}  }{\p \nu}=2\pi $ (from \eqref{Ha}),  and letting $\eta \to 0$ we find
$$\lim_{\eta\to 0} \hal \int_{\T_N \backslash \cup_{i=1}^n B(a_i, \eta)} |\nab H_{\{a_i\}} |^2 + \pi n \log \eta=   \pi \sum_{i =1}^n  R(0) + \pi \sum_{i \neq j}  (- \log |a_i-a_j| + R(a_i-a_j)).$$
Inserting into \eqref{f3} yields   \eqref{Wtor}.
\end{proof}
It turns out that $G$ can be expressed through  an Eisenstein series.  The Eisenstein series with parameters  $\tau \in \mc$ and $u,v \in \mr$  (cf.  \cite{lang}) is defined by
\begin{equation}
\label{eis}
E_{u,v}(\tau):= \sum_{(m,n)\in \mz^2 \sm \{0\}} e^{2i\pi (mu+nv)}\frac{  Im (\tau) }{|m\tau +n|^2}.
\end{equation}
Let us now define, for $x \in \mc$,
\begin{equation}\label{defE}
E_N(x):= E_{\Re(x/N), \Im (x/N)} (i).\end{equation}

As in \cite{ss1} we will also need another classical modular function: the Dedekind $\eta $ function. It is defined,  for $\tau$ in the upper-half of the complex plane  by
\begin{equation}\label{defeta}
\eta(\tau)= q^{1/24} \prod_{k=1}^\infty (1-q^k)\qquad \text{where} \ q=e^{2i\pi \tau}.\end{equation}

The following holds

\begin{pro}\label{form2d}
Let $a_1, \dots , a_n$ be $n=N^2$ points on $\T_N$, and let $W_N$ be defined as in
\eqref{Wtor}. $E_N$ being defined in \eqref{defE}, we have
\begin{equation}\label{WNE}
W_N (a_1, \dots , a_n)=\frac{1}{2N^2} \sum_{i\neq j} E_N(a_i-a_j) + \pi \log \frac{N}{2\pi} - 2\pi \log \eta(i),\end{equation} where points in the plane are identified with complex numbers.
\end{pro}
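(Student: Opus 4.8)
The plan is to start from the expression for $W_N$ in Lemma \ref{Wnper}, namely
\begin{equation*}
W_N(a_1,\dots,a_n)= \frac{\pi}{N^2}\sum_{i\neq j} G(a_i-a_j) + \pi R,
\end{equation*}
and to identify the Green's function $G$ on $\T_N$ explicitly with (a multiple of) the Eisenstein series $E_N$. The key observation is that $G$, being the mean-zero periodic solution of $-\Delta G = 2\pi(\delta_0 - N^{-2})$ on $\T_N = \mr^2/(N\mz)^2$, has an explicit Fourier expansion on the dual lattice $\frac{2\pi}{N}\mz^2$: one gets
\begin{equation*}
G(x) = \sum_{p \in \frac{2\pi}{N}\mz^2\sm\{0\}} \frac{2\pi}{|p|^2}\, e^{i p\cdot x} = \frac{1}{2\pi}\sum_{(m,n)\in\mz^2\sm\{0\}} \frac{N^2}{m^2+n^2}\, e^{\frac{2i\pi}{N}(mx_1+nx_2)}.
\end{equation*}
Comparing this with the definition \eqref{eis}--\eqref{defE} at $\tau = i$ (so $\mathrm{Im}\,\tau = 1$ and $|m\tau+n|^2 = m^2+n^2$), I would check that $E_N(x) = \frac{4\pi^2}{N^2}\, G(x)$, i.e. $\frac{\pi}{N^2} G(x) = \frac{1}{4\pi} E_N(x)$. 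Plugging this into the sum over $i\neq j$ already produces the term $\frac{1}{2N^2}\sum_{i\neq j} E_N(a_i-a_j)$ after matching constants (the $\frac12$ coming from $\frac{1}{4\pi}\cdot 2\pi$ in the comparison, being careful about the normalization of $E_N$). This is essentially the same computation as formula \eqref{formexpl} of \cite{ss1}, just rescaled to the square torus of sidelength $N$ and density $1$.

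The remaining work is to evaluate the constant $\pi R = \pi\lim_{x\to 0}(G(x)+\log|x|)$ in terms of $\eta(i)$ and $N$. Here I would use the Kronecker limit formula, which computes exactly the finite part of the (regularized) Eisenstein/Epstein zeta series at $s=1$ and expresses it through $\log|\eta(\tau)|$ and $\log(\mathrm{Im}\,\tau)$; at $\tau=i$ this gives a clean closed form. Concretely, one knows (cf. \cite{ss1}, or the standard second Kronecker limit formula) that for the square torus of sidelength $1$,
\begin{equation*}
\lim_{x\to 0}\Big( \sum_{p\in(\mz^2)^*\sm\{0\}} \frac{e^{2i\pi p\cdot x}}{4\pi^2|p|^2} + \log|x|\Big) = -\log(2\pi) - 2\log|\eta(i)| + (\text{known constant}),
\end{equation*}
and I would transport this to sidelength $N$ by the scaling $G_{\T_N}(x) = G_{\T_1}(x/N) + \log N$ (which follows from uniqueness of the mean-zero solution and the scaling of $-\Delta$), producing the extra $+\log N$ that yields the announced $\pi\log\frac{N}{2\pi} - 2\pi\log\eta(i)$. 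Throughout I would track the mean-zero normalization of $G$ carefully, since that is exactly what fixes the additive constant.

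The main obstacle is the constant-term computation: getting the Kronecker limit formula normalized correctly at $\tau=i$ and reconciling the three different normalizations in play (the mean-zero Green's function $G$, the Eisenstein series $E_N$ with its $\mathrm{Im}(\tau)$ weight, and the $4\pi^2|p|^2$ in \eqref{formexpl}), so that the multiplicative factors $\frac{1}{2N^2}$, $\pi$, and $-2\pi$ come out precisely as stated. The Fourier-expansion and scaling steps are routine; the bookkeeping of constants, and invoking the Kronecker limit formula in the exact shape needed, is where care is required. I would likely organize the proof as: (i) Fourier expansion of $G$ and identification $\frac{\pi}{N^2}G = \frac{1}{4\pi}E_N$ up to normalization; (ii) scaling reduction to $N=1$; (iii) Kronecker limit formula to evaluate $R$ at $N=1$; (iv) assemble, restoring the $\log N$ from scaling.
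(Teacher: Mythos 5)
Your strategy is exactly the paper's: Fourier-expand the mean-zero Green's function $G$ on $\T_N$, recognize the resulting series as a multiple of $E_N$, and evaluate $R=\lim_{x\to 0}(G(x)+\log|x|)$ via the second Kronecker limit formula. The problem is that several of the explicit constants you commit to are wrong, and since the entire content of this proposition is the constants, the plan as written does not assemble into \eqref{WNE}. First, your Fourier expansion of $G$ carries a spurious factor of $N^2$: the inversion formula on $\T_N$ divides by the volume $N^2$, and the correct expansion is
\begin{equation*}
G(x)=\sum_{p\in\mz^2\sm\{0\}}\frac{e^{\frac{2i\pi}{N}p\cdot x}}{2\pi|p|^2},
\end{equation*}
with no $N^2$ in the numerator. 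Comparing with \eqref{eis}--\eqref{defE} at $\tau=i$ (where $\Im(\tau)=1$ and $|m\tau+n|^2=m^2+n^2$) gives $G(x)=\frac{1}{2\pi}E_N(x)$: the relation is $N$-independent, because the rescaling by $N$ is already built into the argument $x/N$ in the definition of $E_N$. Your claimed identity $E_N=\frac{4\pi^2}{N^2}G$ is therefore false, and with it the bookkeeping by which you produce the prefactor $\frac{1}{2N^2}$ (as written, your coefficient of $\sum_{i\ne j}E_N(a_i-a_j)$ would be $\frac{1}{4\pi}$). The $\frac{1}{N^2}$ does not come from the $G$-to-$E_N$ conversion at all; it is already present in $\frac{\pi}{N^2}\sum_{i\ne j}G$ from Lemma \ref{Wnper}, and $\frac{\pi}{N^2}\cdot\frac{1}{2\pi}=\frac{1}{2N^2}$ directly.

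Second, your scaling relation $G_{\T_N}(x)=G_{\T_1}(x/N)+\log N$ contradicts the mean-zero normalization you say you are tracking: the left-hand side has zero average over $\T_N$ while the right-hand side has average $\log N$. The correct statement is $G_{\T_N}(x)=G_{\T_1}(x/N)$ with no additive constant (both sides solve the right equation with zero mean, since the $\delta$-function scaling exactly compensates the $1/N^2$ from the Laplacian); the $\log N$ then enters only through $R$, via $\log|x|=\log|x/N|+\log N$, so $R_N=R_1+\log N$. The paper sidesteps the scaling reduction entirely by applying the second Kronecker limit formula directly with parameters $u=\Re(x/N)$, $v=\Im(x/N)$, which yields $E_N(x)\sim-2\pi\log\frac{2\pi|x|}{N}-4\pi\log\eta(i)$ as $x\to 0$ and hence $R=-\log\frac{2\pi}{N}-2\log\eta(i)$ in one step; inserting this and $G=\frac{1}{2\pi}E_N$ into \eqref{Wtor} gives \eqref{WNE}. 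With these corrections your outline does go through, but as stated the constant-matching in steps (i) and (ii) would fail.
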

\begin{proof}
As in \cite{ss1}, $G$, introduced in Lemma \ref{Wnper},  can be computed from \eqref{G2d} using Fourier series and this yields:
\begin{equation}\label{Gser}
G(x)= \sum_{p\in \mz^2 \sm \{0\}} \frac{e^{\frac{2i\pi}{N} p \cdot x}}{2\pi |p|^2}.\end{equation}
We recognize here an Eisenstein series:
\begin{equation}\label{Geis}
G(x)=\frac{1}{2\pi} E_{\Re(\frac{x}{N}), \Im (\frac{x}{N}) } (i) =\frac{1}{2\pi} E_N( x) .\end{equation}
Note that the fact that $G$ has zero average implies that
\begin{equation}\label{zeroavE}
\int_{\T_N} E_N(x)\, dx=0.\end{equation}
There remains to compute the value of the constant $R= \lim_{x\to 0} \( G(x)+ \log |x|\)$. For that we use the ``second Kronecker limit formula" (see \cite{lang}) that asserts that
$$E_{u,v}(\tau)=-2\pi \log |f(u-v\tau, \tau)q^{v^2/2}|, $$
where $q=e^{2i\pi \tau}$, $p=e^{2i\pi z}$, $z=u-v\tau$, and
\begin{equation}\label{f}
f(z,\tau)=q^{1/12} (p^{1/2}-p^{-1/2}) \prod_{k\ge 1}(1-q^k p)(1-q^k/p).\end{equation}

We use this formula with $\tau=i$, $u= \Re(\frac{x}{N})$, $v=\Im (\frac{x}{N})$. In that case $q=e^{-2\pi}$, $z=\frac{\overline{x}}{N}$ and $p=e^{2i\pi \frac{\overline{x}}{N} }
$ (where the bar denotes complex conjugation), and the formula yields
\begin{equation}\label{klf}
E_N(x)= -2\pi \log \left|f\(\frac{\overline{x}}{N}   , i\) e^{- \pi (\Im \frac{x}{N})^2  }\right| .
\end{equation}
As $x\to 0$ we have $p\to 1$ and
$$p^{1/2}-p^{-1/2} \sim 2i \pi \frac{\overline{x}}{N} $$
and also (see \eqref{defeta})
$$ q^{1/12}\prod_{k\ge 1} (1-q^k p)(1-q^k/p) \sim_{x \to 0} q^{1/12} \prod_{k\ge 1}
(1-q^k)^2 = \eta(i)^2  . $$
So as $x \to 0$, we have
\begin{equation*}
E_N(x) \sim -2\pi \log \left|\eta(i)^2 2i\pi \frac{\overline{x}}{N}\right|
= -2\pi \log \( \frac{2\pi |x|}{N}\) - 4\pi \log \eta(i).\end{equation*}
Combining with \eqref{Geis}, it follows that $R= - \log \frac{2\pi}{N} - 2\log \eta(i).$ Inserting this and \eqref{Geis} into \eqref{Wtor} we get the result.
 \end{proof}

We emphasize here that this formula is the exact value for $W$ as defined in \eqref{WU} provided we assume full periodicity with respect to $\T_N$.

\subsection{Calculation for points on a torus in dimension  one}
We do here the analogue in dimension 1, i.e.  we compute $W$ given by \eqref{Wroi1} assuming that the point configuration is periodic with respect to $\T_N:=\mr/(N\mz)$. We assume that there are $n=N$ points  $a_1, \dots, a_N$ in $\T_N$, hence $W$ is computed with respect to  the average density 1, i.e. $m= 2\pi$. There exists a unique (up to a constant)  $H_{\{a_i\}}$ satisfying
$$-\Delta H_{\{a_i\}}= 2\pi \(\sum_{i=1}^n \delta_{(a_i,0)}-\dr\)$$ in $\mr^2$ and  which is $N$-periodic  with respect to the first variable, i.e $H_{\{a_i\}}(x+ N, y)=H_{\{a_i\}} (x, y)$ for every $(x, y) \in \mr^2$. We then set $j_{\{a_i\}}= - \np H_{\{a_i\}}$.
Then, as in \cite{ma1d} we have
\begin{lem}
Let $a_1, \dots , a_N$ be $N$ points on $\T_N=\mr/(N \mz)$. Let $j_{\{a_i\}}$ be as above.
Then $W(j_{\{a_i\}})$ as defined in \eqref{Wroi1} is a function of $a_1, \dots, a_N$ only, which is equal to
\begin{equation}\label{Wtor1d}
W_N(a_1, \dots, a_N)= \frac{\pi}{N} \sum_{i \neq j} G(a_i-a_j, 0) +\pi R \end{equation}
where
$G(z)$ is a restriction of the  Green function of $\T_N \times \mr $, more precisely the   solution to
\begin{equation}\label{G3}
- \Delta_z G(z)=2\pi \(\delta_{(0,0)} - \frac{1}{N} \delta_{\T_N}\)  \qquad z\in \T_N \times \mr \end{equation} with $\dashint_{\T_N} G(x,0)\, dx=0$, and $R$ is a constant, equal to $$\lim_{x\to 0}  \( G(x,0) + \log |x|\).$$ Here $\delta_{\T_N} $ is the distribution defined  over $\T_N\times \mr$ by $\int \delta_{\T_N} \phi= \int_{\T_N} \phi(x,0)\, dx.$

\end{lem}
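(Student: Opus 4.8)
The plan is to mimic the two-dimensional calculation of Lemma~\ref{Wnper} almost verbatim, substituting the one-dimensional embedding machinery for the planar one. Concretely, I would first argue, as in \cite{ma1d} or by the same reduction as in the proof of Lemma~\ref{Wnper} (Proposition~3.1 of \cite{ss1}), that periodicity lets us replace the renormalized integral over $\mr^2$ against a cutoff $\chi_R$ by the renormalized integral over a single period $\T_N\times\mr$ against $\indic$, i.e.
\begin{equation*}
W(j_{\{a_i\}})= \frac{1}{N}\(\lim_{\eta\to 0}\hal \int_{(\T_N\times\mr)\sm\cup_{i=1}^N B((a_i,0),\eta)} |\nab H_{\{a_i\}}|^2 + \pi N \log\eta\).
\end{equation*}
Here $|\T_N|=N$ is the relevant normalizing length (the $R$ in \eqref{Wroi1} corresponds to $N$ per period). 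One subtlety compared to the 2D case is that the domain of integration $\T_N\times\mr$ is noncompact in the $y$-direction, so I would need to check that $\nab H_{\{a_i\}}$ decays fast enough in $|y|$ (exponentially, in fact, since the $y$-Fourier modes of $\delta_{(a_i,0)}-\frac1N\delta_{\T_N}$ produce $e^{-c|y|}$ tails) for all the integrations by parts to be legitimate and for boundary terms at $y=\pm\infty$ to vanish.

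Next I would solve \eqref{G3} and, by translation invariance, write $H_{\{a_i\}}(x,y)=\sum_{i=1}^N G(x-a_i,y)$, exactly paralleling \eqref{f2}. Then I apply Green's formula on $(\T_N\times\mr)\sm\cup_i B((a_i,0),\eta)$:
\begin{equation*}
\int |\nab H_{\{a_i\}}|^2 = \int (-\Delta H_{\{a_i\}}) H_{\{a_i\}} + \sum_{i=1}^N \int_{\p B((a_i,0),\eta)} H_{\{a_i\}}\,\frac{\p H_{\{a_i\}}}{\p n}.
\end{equation*}
The volume term, using $-\Delta H_{\{a_i\}}=2\pi(\sum_j\delta_{(a_j,0)}-\dr)$ away from the excised balls, contributes $-2\pi\sum_j\int_{\T_N\times\mr} G(x-a_j,y)\,\dr$, which vanishes because $G(\cdot,0)$ has zero average over $\T_N$ by construction (this is the one-dimensional analogue of the zero-average cancellation in the 2D proof; note the measure $\dr$ concentrates on $y=0$, so only $G(x-a_j,0)$ appears). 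For the boundary term I split $G(x-a_j,y)= -\log|(x,y)-(a_j,0)| + R(x-a_j,y)$ near each $(a_i,0)$, use that $R$ is continuous, that $\lim_{\eta\to 0}\int_{\p B((a_i,0),\eta)}\frac{\p H_{\{a_i\}}}{\p n}=2\pi$, and that $\lim_{\eta\to 0}\int_{\p B((a_i,0),\eta)} (-\log\eta)\frac{\p H_{\{a_i\}}}{\p n}$ combines with the added $\pi N\log\eta$ to leave a finite limit. Collecting terms I would obtain
\begin{equation*}
\lim_{\eta\to 0}\hal\int |\nab H_{\{a_i\}}|^2 + \pi N\log\eta = \pi\sum_{i=1}^N R(0,0) + \pi\sum_{i\neq j}\(-\log|a_i-a_j| + R(a_i-a_j,0)\),
\end{equation*}
and then $-\log|a_i-a_j|+R(a_i-a_j,0)=G(a_i-a_j,0)$, so dividing by $N$ yields \eqref{Wtor1d} with $R=R(0,0)=\lim_{x\to 0}(G(x,0)+\log|x|)$.

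The main obstacle, I expect, is not the algebra — which is a faithful transcription of Lemma~\ref{Wnper} — but justifying the manipulations on the noncompact slab $\T_N\times\mr$: namely that $H_{\{a_i\}}$ is well-defined up to a constant (uniqueness requires controlling growth at $y=\pm\infty$, and here one wants the \emph{bounded} solution, which exists precisely because the source has zero total mass on each vertical line only in an averaged sense — one must check the logarithmic-in-$|y|$ growth cancels), that $\nab H_{\{a_i\}}\in L^2$ away from the singularities with the claimed exponential decay, and that no boundary contribution arises at infinity in Green's formula. A clean way to handle all of this at once is to pass to Fourier series in the $x$-variable: writing $G(x,y)=\frac1N\sum_{k\in\mz}g_k(y)e^{2i\pi kx/N}$, the equation \eqref{G3} decouples into ODEs $-g_k''+\frac{4\pi^2k^2}{N^2}g_k=2\pi\delta_0$ for $k\neq 0$ (giving $g_k(y)=\frac{N}{2|k|}e^{-2\pi|k||y|/N}$) and $-g_0''=2\pi\delta_0-\tfrac{2\pi}{N}$... — here one sees the zero mode needs care since $\delta_0-\frac1N$ has zero average on a line of length $N$ only if we also subtract the uniform background, which is exactly what $\frac1N\delta_{\T_N}$ does; so $g_0$ is the periodized solution on $\mr$ and is bounded. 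This explicit representation then makes the decay estimates and the zero-average normalization transparent, and also matches the form that will be needed to express $G$ through the logarithm of a theta/eta-type function in the subsequent one-dimensional analogue of Proposition~\ref{form2d}.
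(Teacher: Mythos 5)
Your proposal is correct and follows essentially the same route as the paper: reduce to one period cell, write $H_{\{a_i\}}=\sum_i G(\cdot-(a_i,0))$, apply Green's formula so that the volume term vanishes by the zero-average normalization $\dashint_{\T_N}G(x,0)\,dx=0$, and extract the renormalized boundary contributions via the splitting $G=-\log|z|+R(z)$. Your additional care about decay in $|y|$ on the noncompact slab and the Fourier-mode representation of $G$ is a sensible supplement (the paper defers the latter to the explicit computation of $G(x,0)$ in the next proposition) but does not change the argument.
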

\begin{proof}The proof is analogous to that of Lemma \ref{Wnper}.
First we have
\begin{equation}\label{f5}
 W(j_{\{a_i\}})= \frac{1}{|\T_N|}  \(\lim_{\eta \to 0} \hal \int_{(\T_N\times \mr ) \backslash \cup_{i=1}^N B((a_i,0), \eta)} |\nab H_{\{a_i\}} |^2 +  \pi N \log \eta\).\end{equation}
 We again write
\begin{equation}\label{f6} H_{\{a_i\}}(z) = \sum_{i=1}^n G(z- (a_i,0) ), \end{equation} with  $G(z)= -\log |z|+R(z)$, $R$ a continuous function.
Using Green's formula   we have
\begin{multline*}
\int_{(\T_N\times \mr )\backslash \cup_{i=1}^N B((a_i,0) , \eta)} |\nab H_{\{a_i\}} |^2= -2  \int_{(\T_N\times \mr ) \backslash \cup_{i=1}^N  B((a_i,0), \eta)}    H_{\{a_i\}} \delta_{\T_N}  \\+ \sum_{i=1}^N  \int_{\p B((a_i,0) , \eta)}   H_{\{a_i\}}  \frac{\p H_{\{a_i\}}  }{\p n} .\end{multline*}
From
 \eqref{f6} we have $$\lim_{\eta\to 0}  \int_{(\T_N\times \mr ) \backslash \cup_{i=1}^N  B((a_i,0), \eta)}    H_{\{a_i\}} \delta_{\T_N} = \sum_{j=1}^N \int_{\T_N} G(x-a_j,0) \, dx  =0$$ by choice of $G$.
The rest of the proof is exactly as in Lemma \ref{Wnper}, inserting the splitting of $G$  and \eqref{G3}.
\end{proof}
On the other hand we shall see below that we have the following explicit formula for $G$ restricted to the real axis:
\begin{equation}
\label{Glogsin}G(x,0)= - \log \left|2\sin \frac{\pi x}{N}\right|.
\end{equation}
Note that a consequence of this and the zero average condition on $G$ is that
\begin{equation}\label{log0}
\int_{\T_N}  \log \left|2\sin \frac{\pi v}{N}\right|\, dv=0.\end{equation}
The previous lemma and \eqref{Glogsin}  lead us to the following
\begin{pro}
Let $a_1, \dots, a_N$ be $N$ points on $\T_N= \mr/(N\mz)$ and let $W_N$ be defined as in \eqref{Wtor1d}. We have
\begin{equation}\label{WNlog}
W_N(a_1, \dots, a_N)= -\frac{\pi}{N} \sum_{i \neq j} \log \left|2\sin \frac{\pi(a_i - a_j)}{N} \right|- \pi \log \frac{2\pi}{N}.\end{equation}
\end{pro}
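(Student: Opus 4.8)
The plan is to combine the previous lemma, which gives
$$W_N(a_1,\dots,a_N)= \frac{\pi}{N}\sum_{i\neq j} G(a_i-a_j,0)+\pi R,$$
with the explicit formula \eqref{Glogsin} for the restriction of $G$ to the real axis. Substituting $G(a_i-a_j,0)=-\log\left|2\sin\frac{\pi(a_i-a_j)}{N}\right|$ into the sum gives immediately the first term $-\frac{\pi}{N}\sum_{i\neq j}\log\left|2\sin\frac{\pi(a_i-a_j)}{N}\right|$. It then remains only to identify the constant $\pi R$ with $-\pi\log\frac{2\pi}{N}$, i.e.\ to show that $R=\lim_{x\to 0}\left(G(x,0)+\log|x|\right)=-\log\frac{2\pi}{N}$.

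The first real task is to justify \eqref{Glogsin}. I would verify that $g(x,y):=-\log\left|2\sin\frac{\pi(x+iy)}{N}\right|$, viewed as a function on $\T_N\times\mr$, solves \eqref{G3}. Since $\sin(\pi z/N)$ is holomorphic and $-\log|F|$ is harmonic away from the zeros of a holomorphic $F$, the function $g$ is harmonic on $(\T_N\times\mr)\setminus\{(0,0)\}$ (the zeros of $\sin(\pi z/N)$ on the strip are exactly at $z\in N\mz$, i.e.\ the single point $(0,0)$ on the torus). Near $(0,0)$, $2\sin(\pi z/N)\sim \frac{2\pi z}{N}$, so $g(z)\sim -\log|z|+\log\frac{N}{2\pi}$, which produces the right $2\pi\delta_{(0,0)}$ singularity and simultaneously reads off the constant $R=\log\frac{N}{2\pi}=-\log\frac{2\pi}{N}$. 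I then need the behavior as $|y|\to\infty$: writing $|2\sin(\pi z/N)|^2 = 2(\cosh(2\pi y/N)-\cos(2\pi x/N))$ one sees $g(x,y)\sim -\frac{\pi|y|}{N}$, so $\partial_y g \to \mp\frac{\pi}{N}$ as $y\to\pm\infty$; the jump of the normal derivative across $y=0$ away from $x=0$ must then account for the $-\frac{2\pi}{N}\delta_{\T_N}$ term. Concretely, $\partial_y\left[-\log|2\sin(\pi z/N)|\right]$ at $y=0^\pm$ is an odd-in-$y$ quantity, and computing it shows the distributional Laplacian picks up exactly $-\frac{2\pi}{N}$ times the length measure on $\T_N$. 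Finally the normalization $\dashint_{\T_N}G(x,0)\,dx=0$ is precisely \eqref{log0}, which I would note follows from Jensen's formula (or direct Fourier computation) for $\int_0^N \log|2\sin(\pi x/N)|\,dx=0$; since any two solutions of \eqref{G3} differ by a harmonic function on the strip that is bounded hence (being periodic) constant, and the constant is fixed by the average condition, $G(x,0)$ is indeed given by \eqref{Glogsin}.

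Once \eqref{Glogsin} and the value $R=-\log\frac{2\pi}{N}$ are in hand, the proposition follows by direct substitution into \eqref{Wtor1d}:
$$W_N(a_1,\dots,a_N)=-\frac{\pi}{N}\sum_{i\neq j}\log\left|2\sin\frac{\pi(a_i-a_j)}{N}\right|-\pi\log\frac{2\pi}{N}.$$

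I expect the main obstacle to be the careful distributional check that $g(x,y)=-\log|2\sin(\pi z/N)|$ satisfies \eqref{G3}, and in particular that the ``defect'' in its Laplacian along the real axis is exactly the measure $-\frac{2\pi}{N}\delta_{\T_N}$ with the correct constant — this requires tracking the normal-derivative jump across $\{y=0\}$ and matching it against the prescribed right-hand side, rather than just checking harmonicity off the axis. Everything else (the point singularity, the asymptotics at infinity, identifying $R$, and the final substitution) is routine.
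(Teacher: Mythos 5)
Your overall structure (substitute \eqref{Glogsin} into \eqref{Wtor1d} and read off $R$ from the expansion of $-\log|2\sin(\pi z/N)|$ at the origin) is the right one, and the paper does exactly the same final substitution. But your justification of \eqref{Glogsin} contains a genuine error: the function $g(x,y)=-\log\left|2\sin\frac{\pi(x+iy)}{N}\right|$ does \emph{not} solve \eqref{G3}. Since $2\sin(\pi z/N)$ is holomorphic on the cylinder with a single simple zero at the origin, $-\Delta g=2\pi\delta_{(0,0)}$ and nothing else: $g$ is harmonic (indeed real-analytic) across $\{y=0,\ x\neq 0\}$, so there is no normal-derivative jump there to produce the term $-\frac{2\pi}{N}\delta_{\T_N}$. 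You can see this explicitly from your own identity $|2\sin(\pi z/N)|^2=2\bigl(\cosh\frac{2\pi y}{N}-\cos\frac{2\pi x}{N}\bigr)$: one gets $\partial_y g=-\frac{\pi}{N}\,\frac{\sinh(2\pi y/N)}{\cosh(2\pi y/N)-\cos(2\pi x/N)}$, which is continuous and vanishes at $y=0$ for $x\neq 0$, so the "odd-in-$y$ jump" you invoke is zero. Your asymptotics at infinity already signal the problem: $g\sim-\pi|y|/N$ gives total outgoing flux $-2\pi$, exactly matching the point charge alone, leaving nothing for the background charge on $\T_N$; the true solution of \eqref{G3} has zero total charge and must decay.

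The fix is small but necessary: the correct solution is $G(x,y)=-\log\left|2\sin\frac{\pi(x+iy)}{N}\right|+\frac{\pi|y|}{N}$ (up to the additive constant fixed by the average condition). The added term is harmonic off $\{y=0\}$ and $-\Delta\bigl(\frac{\pi|y|}{N}\bigr)=-\frac{2\pi}{N}\delta_{\T_N}$, which supplies exactly the missing background term; it also restores decay at infinity, which is what makes the uniqueness argument work (a harmonic function on the cylinder need not be bounded --- $y$ itself is harmonic and periodic --- so you must impose decay before invoking "bounded periodic harmonic $\Rightarrow$ constant"). Since the correction vanishes on $\{y=0\}$, the restriction $G(x,0)=-\log|2\sin(\pi x/N)|$, the value $R=-\log\frac{2\pi}{N}$, and hence the proposition itself are all unaffected. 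For comparison, the paper avoids guess-and-verify entirely: it solves \eqref{G3} by Fourier series/transform, obtaining $G(x,y)=\sum_{m\ge1}\frac{\cos(2\pi mx/N)}{m}e^{-2\pi|y|m/N}$, and then evaluates at $y=0$ via Clausen's formula; summing that series in closed form reproduces precisely $-\log|2\sin(\pi z/N)|+\frac{\pi|y|}{N}$, confirming the corrected ansatz.
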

\begin{proof} The first step is to prove \eqref{Glogsin}.
This is done by solving  \eqref{G3} in Fourier series/transform. We choose the following normalization for Fourier transforms and series:
$$\widehat{f}(\xi)= \int f(x) e^{-2i\pi x \cdot \xi }\, dx$$
$$c_k(f)= \int_{\T_N} f(x) e^{-\frac{2i\pi kx}{N}} \, dx.$$
Then  the Fourier inversion formula is $f(x) =\int \widehat{f}(\xi) e^{2i\pi x\cdot \xi} \, d\xi$ and respectively
$$f(x)= \frac{1}{N} \sum_{k \in \mz} c_k(f) e^{\frac{2i\pi k}{N} x} .$$
 Since $G$  is $\T_N$-periodic   in its first variable, we may define its Fourier transform as a Fourier series in the first variable and  a  Fourier  transform in the second, i.e., for $m \in \mz$ and $\xi \in \mr$, 
$$\widehat{G}(m , \xi)= \int_{x\in \T_N}  \int_{y \in \mr} G(x,y) e^{-\frac{2i \pi mx}{N}} e^{-2i \pi y \xi} \, dx\, dy.
$$
If $G$ solves \eqref{G3} then by Fourier transform, $\widehat{G}$ has to satisfy
$$4\pi^2 \(\frac{m^2}{N^2}+\xi^2\)\widehat{G}(m , \xi)= 2\pi \widehat{\delta_{(0,0)}} - \frac{2\pi}{N} \widehat{\delta_{\T_N}}. $$
It is direct to establish that
 $\widehat{\delta_{\T_N}} =N \delta_m^0$ with $\delta_m^0$ by definition equal to $1$ if $m =0$ and $0$ otherwise.
Combining these facts,
we obtain
$$ \widehat{G}(m,\xi) = \frac{1 - \delta_m^0}{2\pi \( \frac{m^2}{N^2}+\xi^2\)}
\qquad for\ (m, \xi) \neq (0,0).$$
The undetermination of $\widehat{G}$ at $(0,0)$ corresponds to the fact that $G$ is only determined by \eqref{G3} up to a constant.
By Fourier inversion, it follows that
\begin{multline*}
G(x,y)= \frac{1}{N} \sum_{m \in \mz} \int_{\xi \in \mr}
 \frac{1 - \delta_m^0   } {2\pi ( \frac{m^2}{N^2}+\xi^2   ) }   e^{2i\pi \frac{m}{N} x  + 2i \pi y \xi}  \, d\xi+c
= \frac{1}{N} \sum_{m \in  \mz^*   }   \int_{ \mr}
\frac{e^{2i\pi \frac{m}{N} x + 2i \pi y\xi }} {2\pi ( \frac{m^2}{N^2}+\xi^2)}  \, d\xi+c.
\end{multline*}
Using the formula $\int_0^\infty \frac{\cos (bx)}{x^2+z^2} \, dx= \frac{\pi  e^{-|b|z}  }{2z}$  (cf. \cite{pbm1}) with $b=2\pi y$ and $z=|m|/N$,  we arrive at
\begin{equation}
\label{Gcos}
G(x,y)= \sum_{m=1}^\infty \frac{\cos (2\pi\frac{ m}{N}x) }{m} e^{-2\pi |y|\frac{|m|}{N} }+c.\end{equation}
We next particularize to the case $y=0$, and use   Clausen's formula (cf. \cite[Chap. 4]{lewin})
\begin{equation}\label{clausen}
\sum_{k=1}^\infty\frac{\cos (kx)}{k} = - \log \left|2\sin \frac{x}{2}\right|  \quad \text{for} \  0<x<2\pi,
\end{equation} and thus 
we find $$G(x,0)= - \log \left|2\sin \frac{\pi x}{N}\right|+ c.$$
The constant $c$ should  be chosen so that $\dashint_{\T_N} G(x,0)\, dx=0$, which imposes  
$c=0$, in view of  the relation
 $\int_{\T_N} \log \left|2\sin \frac{\pi x}{N} \right|\, dx =0$, a direct consequence of \eqref{clausen}.
This establishes \eqref{Glogsin}. In addition the value of $R$ follows: $R= \lim_{x\to 0} G(x,0)+ \log |x|= - \log \frac{2\pi}{N}$ and inserting into \eqref{Wtor1d}, the result follows.
\end{proof}

\subsection{New definition   for a general point configuration in the plane/line}
In order to define $W$ for a general configuration  of points, without referring to the current they generate, we choose to base ourselves on the formulas \eqref{WNE} and \eqref{WNlog} of the previous subsection.

To get rid of some constants, we use  however  a different normalization\footnote{the choice of  normalization in dimension 1 is based on \eqref{minen1d}} and, for a given family of points $\{a_i\}$, we  define
\begin{equation}
\label{wc1d2}\Wc = - \frac{1}{N} \sum_{i\neq j, a_i, a_j \in  [0, N]} \log \left|2\sin \frac{\pi(a_i-a_j)} N\right|  + \log N\qquad \text{in dimension 1}\end{equation} respectively
\begin{equation}\label{wc2d2}
\Wc = \frac{1}{2\pi N^2 }  \sum_{i\neq j, a_i, a_j \in [0,N]^2} E_N(a_i-a_j) + \log \frac{N}{2\pi \eta(i)^2}\qquad \text{in dimension 2}.\end{equation}

The results of the previous sections suggest trying to define $\mathcal{W}$  as the limit as $N\to \infty$ of these quantities.
More precisely, for a given random point process, $\Wc$ becomes a random variable, and we try to see whether it has a limit as $N \to \infty$.
Again, we do not claim a complete rigorous connection between such a quantity (which only depends on the point configuration) and the original $W$, which we recall, was defined via a vector field $j$. We also emphasize that in \eqref{wc1d2} and \eqref{wc2d2} the number of points in $[0,N]$ resp. $[0,N]^2$ is no longer necessarily equal to $N$ resp. $N^2$.

Let us comment a little on the minimization of $\Wc$. Since our definition has been relaxed, the question of whether $\Wc$ achieves a minimum becomes unclear. However, in dimension 1, 
observing that still $\Wc(\mz)=0$,  we have  the following statement
(in  dimension 2 no such result is available):
\begin{lem}
Assume $a_1, \dots, a_k $ are $k$ points in $[0,N]$. Then
\begin{equation}\label{eq:new1}
  - \frac{1}{N} \sum_{i\neq j} \log \left|2\sin \frac{\pi(a_i-a_j)} N\right|  + \log N \ge   \( 1- \frac{k}{N}\) \log N + \frac{k}{N} \log \frac{N}{k}.
\end{equation} 
Thus, for any point configuration $\Wc\ge \left( 1- \frac{k}{N}\right) \log N + \frac{k}{N} \log \frac{N}{k} $, 
where $k$ is the number of points in $[0,N]$.
\end{lem}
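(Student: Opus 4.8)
The plan is to recognize the left-hand side of \eqref{eq:new1} as a rescaled logarithmic energy of $k$ points on the unit circle, and then invoke the classical extremal property of roots of unity. First I would rescale by $1/N$ and identify the circle $\mr/(N\mz)$ with the unit circle of $\mc$ through $a_j \mapsto z_j := e^{2i\pi a_j/N}$; since $\left|2\sin\frac{\pi(a_i-a_j)}{N}\right| = |z_i - z_j|$, this gives
\begin{equation*}
\sum_{i\neq j}\log\left|2\sin\frac{\pi(a_i-a_j)}{N}\right| \;=\; \sum_{i\neq j}\log|z_i-z_j| \;=\; \log\prod_{i<j}|z_i-z_j|^2 .
\end{equation*}
If two of the $a_j$ are congruent modulo $N$ the left-hand side of \eqref{eq:new1} is $+\infty$ and there is nothing to prove, so we may assume the $z_j$ distinct. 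The whole statement then reduces to the inequality
\begin{equation*}
\prod_{i<j}|z_i-z_j|^2 \le k^k \qquad\text{for all } z_1,\dots,z_k \text{ on the unit circle,}
\end{equation*}
because substituting it into the identity above yields
\begin{equation*}
-\frac1N\sum_{i\neq j}\log\left|2\sin\frac{\pi(a_i-a_j)}{N}\right| + \log N \;\ge\; \log N - \frac{k\log k}{N} \;=\; \Big(1-\frac kN\Big)\log N + \frac kN\log\frac Nk ,
\end{equation*}
which is exactly \eqref{eq:new1}.

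For the inequality itself I would argue via the Vandermonde determinant. Writing $V := \big(z_i^{\,j-1}\big)_{1\le i,j\le k}$, one has $|\det V|^2 = \prod_{i<j}|z_i-z_j|^2 = \det(V^*V)$. Since $|z_i|=1$, the entries of the Hermitian positive semidefinite matrix $V^*V$ are $(V^*V)_{jl} = \sum_{i=1}^k \overline{z_i}^{\,j-1} z_i^{\,l-1} = \sum_{i=1}^k z_i^{\,l-j}$; in particular every diagonal entry equals $k$. Hadamard's inequality for positive semidefinite matrices then gives $\det(V^*V) \le \prod_{j=1}^k (V^*V)_{jj} = k^k$. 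Equality holds for the $k$-th roots of unity, where $V^*V = k\,\mathrm{Id}$ (equivalently $\prod_{j\neq i}|z_i-z_j| = k$ for each $i$); for $k$ close to $N$ this is precisely the computation behind the normalization $\Wc(\mz)=0$.

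The last assertion is then immediate: if $[0,N]$ contains exactly $k$ points $a_1,\dots,a_k$, the quantity $\Wc$ in \eqref{wc1d2} is by definition the left-hand side of \eqref{eq:new1} for these points, so $\Wc \ge \big(1-\frac kN\big)\log N + \frac kN\log\frac Nk$. I do not expect a genuine obstacle here: the only idea needed is to see \eqref{eq:new1} as a statement about Fekete-type points on the circle, whose extremizer is the regular configuration of roots of unity; the Vandermonde/Hadamard route keeps the argument self-contained and, importantly, produces a bound valid for \emph{all} configurations, not merely the critical ones (which is where a naive Lagrange-multiplier computation would stop).
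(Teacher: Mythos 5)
Your argument is correct, and it takes a genuinely different route from the paper's. You reduce \eqref{eq:new1} to the classical extremal bound $\prod_{i<j}|z_i-z_j|^2\le k^k$ for $k$ points on the unit circle, proved via Hadamard's inequality applied to the Vandermonde matrix $V$ (each column has norm $\sqrt{k}$ because $|z_i|=1$, so $\det(V^*V)\le k^k$); the arithmetic then matches the right-hand side exactly, since $\bigl(1-\tfrac kN\bigr)\log N+\tfrac kN\log\tfrac Nk=\log N-\tfrac kN\log k$. The paper instead orders the points, groups the pairwise terms by the cyclic gap index $p$ (setting $u_{p,i}=a_{i+p}-a_i$ with $\sum_i u_{p,i}=pN$), applies Jensen's inequality to the concave function $\log|2\sin x|$ on $[0,\pi]$, and then invokes the identity $-2\sum_{p=1}^{[k/2]}\log\bigl|2\sin\tfrac{p\pi}{k}\bigr|+\log k=0$ coming from $\Wc(\mz)=0$ --- which is the same fact as your equality case $\prod_{j\neq i}|z_i-z_j|=k$ at the roots of unity. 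Both proofs ultimately rest on the extremality of the equispaced configuration, but your Hadamard route avoids the ordering of the points, the circular indexing conventions, and the bookkeeping with the integer part $[k/2]$, and it exhibits the equality case transparently; the paper's concavity argument has the advantage of staying entirely within the $\log|2\sin|$ framework and of reusing the normalization $\Wc(\mz)=0$ directly. Your explicit treatment of the degenerate case (two points congruent mod $N$, where the left-hand side is $+\infty$) is also correct.
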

\begin{proof}
The proof is a simple  adaptation of the proof of minimality of the perfect lattice in \cite{ma1d}. Let  $a_1,\dots, a_k\in [0,N]$, and assume $a_1<\dots <a_k$.
 Let us also denote $u_{1,i}= a_{i+1}-a_i $, with the convention $a_{ k+1}=a_1+ N $.  We have $\sum_{i=1}^k u_{1,i}=   N$.
 Similarly, let
$u_{p,i}= a_{i+p}-a_{i}$, with the convention $a_{k+l}=a_l+N$.  We have $\sum_{i=1}^k u_{p,i}= p N$.
By periodicity of $\sin$, we may view the points $a_i$ as living on the circle $\mr/(N\mz)$. When adding the terms in $a_i-a_j$ in the sum of \eqref{eq:new1}, we can split it according to the difference of $p=j-i$ but modulo $N$.  This way, there remains
\begin{equation}\label{3.12}
 - \frac{1}{N} \sum_{i\neq j} \log \left|2\sin \frac{\pi(a_i-a_j)} N\right|  + \log N=-
\frac{2}{N}\sum_{p=1}^{[k/2]} \sum_{i=1}^k   \log \left| 2\sin\frac{ \pi u_{p,i}}{N} \right|+ \log N,\end{equation} where $[\cdot ]$ denotes the integer part.
But the function $\log |2\sin x|$ is stricly concave on $[0,\pi]$. It follows that
$$\frac{1}{k}\sum_{i=1}^k \log \left| 2\sin\frac{ \pi u_{p,i}}{N} \right|\le \log \left|2\sin \( \frac{\pi }{N k} \sum_{i=1}^k  u_{p,i} \) \right|= \log \left|2\sin\frac{ p\pi}{k}\right|.$$
Inserting into \eqref{3.12} we obtain
\begin{equation}
\label{3.2}
- \frac{1}{N} \sum_{i\neq j} \log \left|2\sin \frac{\pi(a_i-a_j)} N\right|  + \log N\ge
   - \frac{2k}{N}          \sum_{p=1}^{[k/2]} \log \left|2\sin\frac{ p\pi}{k}\right|+ \log N.
\end{equation}
On the other hand, we know that $\Wc(\mz)=0$ which means that   $- \frac{2}{N} \sum_{p=1}^{[ N/2]}N \log \left|2\sin \frac{p \pi} N\right|  + \log N= 0$, but also, since this is true for arbitrary integers $N$,
\begin{equation}\label{wdez}
  - 2 \sum_{p=1}^{[k/2]} \log \left|2\sin \frac{p \pi} k\right|  + \log k=0 .\end{equation}
Inserting into \eqref{3.2} we are led to
\begin{equation}
\label{3.3}
- \frac{1}{N} \sum_{i\neq j} \log \left|2\sin \frac{\pi(a_i-a_j)} N\right|  + \log N\ge
 - \frac{k}{N}     \log k + \log N= \( 1- \frac{k}{N}\) \log N + \frac{k}{N} \log \frac{N}{k}.
 \end{equation}
\end{proof}

\section{Expectation of $\Wc$}\label{sec3}

\subsection{Expectation and 2-point correlation functions}\label{correl}
We now turn to evaluating $\Wc$ for  random point processes.
In view of its form \eqref{wc1d2}--\eqref{wc2d2},  the expectation
of the random variable $\Wc$ can  be computed from the sole knowledge of the second correlation function of the process.
  Indeed, recall that for any $k\ge 1$, the $k$-point correlation function $ \ro_k$ of a random point process in $\mr^d$ is 
characterized by the property that
\begin{equation}\label{corr}\E \sum_{i_1, \dots, i_k \ pairwise\  distinct} F(x_{i_1},
 \dots, x_{i_k})= \int_{(\mr^d)^k} F(x_{1}, \dots, x_{k}) \ro_k(x_{1}, \dots, x_{k})
\, dx_{1} \dots \, dx_{k} ,\end{equation}
where the expectation is with respect to our measure on locally finite subsets 
$X=\{x_j\}\subset \mathbb{R}^d$, and $F$ ranges over a suitable space of test functions, 
see e.g. \cite{dvj}.

We note here that determinantal processes are a particular class of processes characterized by the fact that  
the correlation functions can be expressed as
\begin{equation}\label{rodet}
 \ro_k(x_1,\dots, x_n)= \det \(K(x_i, x_j)\)_{i,j \in [1, k]}
\end{equation}
for some kernel $K(x,y)$, see \cite{Sos00}, \cite{Lyo03}, \cite{Joh05}, \cite{Kon05},
\cite{Hou06}, \cite{Sos06}, \cite{Bor11} and references therein.  This will be used later.

 Here we need specifically the two-point correlation function.
 In addition, for our function the formula simplifies when the process
is assumed to be stationary (i.e. translation invariant). 
From now on we make the basic 
assumption that we are dealing with a translation invariant multiplicity-free random 
point process in $\mr $ or $\mr^2$, of density $1$ (i.e $\ro_1\equiv 1$) with
absolutely continuous correlation functions (hence, \eqref{corr} holds).
If $\ro_2(x,y)$ is the two-point correlation function of such a process, it is of the 
form $r_2(x-y)$ since the process is stationary. It is more convenient to work  
with the ``second cluster function" $T_2= 1- r_2$ (we will give a general definition of
the cluster functions in Section \ref{sec4}).

Our basic assumptions thus imply
\begin{equation}\label{basicass}
\ro_1 \equiv 1 , \quad \ro_2(x,y)=1-T_2(x-y) \ \text{for some function } T_2.\end{equation}

By definition of $\ro_2$, the expectation of $\Wc$ is, in dimension 1, 
\begin{equation}\label{ew1d}\E \Wc= \frac{1}{N} \int_{[0,N]^2} \log \left|2 \sin \frac{\pi (x-y)}{N} \right|T_2(x-y)\, dx\, dy +\log N \end{equation} (where we have used \eqref{wc1d2} and \eqref{log0}) and
 respectively in dimension 2
 \begin{equation}\label{ew2d}\E \Wc =- \frac{1}{2\pi N^2} \int_{[0,N]^2 \times [0, N]^2} E_N(x-y) T_2(x-y) \, dx \, dy +  \log \frac{N}{2\pi \eta(i)^2} \end{equation} (where we have used \eqref{wc2d2} and \eqref{zeroavE}).
  The question is then whether these quantities have a limit as $N \to \infty$. As we show below, this will only be 
  true under additional assumptions which in particular ensure a sufficient decay of $T_2$.

Finally, it would be most interesting to find natural conditions on the random points behavior
(their spacing etc) that would guarantee the existence of a limit to $ \E\Wc$.

\subsection{Expectation for one-dimensional processes: theoretical formula}
\begin{theo}\label{th1}
  Consider a random point process  $\X$ on the real line with 
the one-point correlation function $\ro_1(x) $ and a two-point correlation 
function $\ro_2(x,y)$, satisfying \eqref{basicass}. Under the following assumptions
  \begin{enumerate}
   \item[1)] $\sup_{v\in \mr} |T_2(v)|<\infty$;
\item[2)] there exist a sequence $\{\a_N\}_{N\ge 1}$ such that $\log N \ll \a_N \ll N^{1/2 -\ep}$ 
as $N \to \infty$ (for some $\ep >0$) and uniformly in $ A\in [\a_N, N-\a_N]$ we have
 $$\int_{\a_N}^A T_2(v) \log \left|2\sin  \frac{\pi v}{N}\right|\, dv
 =o(1)\quad \text{as  }N\to\infty;$$ \end{enumerate}
 the following holds:
 \\
 - if $\D\int_{-\infty}^\infty T_2(v)\log |v| \, dv<\infty$ and $ \D \int_{-\infty}^\infty T_2(v)\, dx = c \neq 1 $, then   $\E \Wc \to \infty $ as $N\to \infty$;\footnote{We  believe that $\Wc$ should be bounded below or at least that the value $-\infty$ is in fact not taken.}\\
 - if $\D \int_{-\infty}^\infty T_2(v)\, dx =1$  and $1-\D\int_{-\a_N}^{\a_N} T_2(v)\, dv=o\bigl((\log N)^{-1}\bigr)$ for $\{\a_N\}_{N\ge 1}$ as above,
   then  $\lim_{N\to \infty} \E \Wc $ exists and is finite if and only if   $\int_{-\infty}^\infty T_2(v)\log |v| \, dv$ converges, and if so then
 \begin{equation}\label{resth1}
 \lim_{N\to \infty} \E \Wc =  \log 2\pi +  \int_{-\infty}^\infty \log | v| T_2(v) \, dv.\end{equation}
\end{theo}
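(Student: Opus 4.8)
The plan is to start from the exact formula \eqref{ew1d} for $\E\Wc$ and to understand the limit of the integral
$$J_N := \frac 1N \int_{[0,N]^2} \log\left|2\sin\frac{\pi(x-y)}{N}\right| T_2(x-y)\,dx\,dy.$$
First I would reduce the double integral to a single one. Writing $v=x-y$ and integrating out the other variable (exploiting that $T_2$ and $\log|2\sin(\pi\cdot/N)|$ depend on $v$ only, and that the $v$-interval has length $N-|v|$), I get
$$J_N = \int_{-N}^{N}\left(1-\frac{|v|}{N}\right)\log\left|2\sin\frac{\pi v}{N}\right| T_2(v)\,dv.$$
Then I would split this integral into the ``near-diagonal'' regime $|v|\le \alpha_N$, the ``bulk'' regime $\alpha_N\le |v|\le N-\alpha_N$, and the ``far'' regime $N-\alpha_N\le |v|\le N$, using the sequence $\alpha_N$ from assumption 2). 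By symmetry of $\sin$ about $N/2$, the far regime is handled like the near one.

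Next I would analyze each piece. In the near-diagonal regime, on $|v|\le\alpha_N$ one has $2\sin\frac{\pi v}{N}=\frac{2\pi v}{N}(1+O((\alpha_N/N)^2))$, so $\log|2\sin\frac{\pi v}{N}| = \log\frac{2\pi|v|}{N} + o(1)$ uniformly, and $1-|v|/N = 1+O(\alpha_N/N)$. Hence the near-diagonal contribution is, up to $o(1)$ errors controlled via assumption 1) ($T_2$ bounded) and the smallness of $\alpha_N/N$,
$$\int_{-\alpha_N}^{\alpha_N}\left(\log 2\pi + \log|v| - \log N\right) T_2(v)\,dv.$$
The term $\log|v|\,T_2(v)$ converges to $\int_{-\infty}^{\infty}\log|v|\,T_2(v)\,dv$ (when this converges; when it does not, this is exactly where divergence enters, giving the ``if and only if''). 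The term $(\log 2\pi)\int_{-\alpha_N}^{\alpha_N}T_2 \to \log 2\pi$ using the hypothesis $\int_{-\infty}^\infty T_2 = 1$ together with $1-\int_{-\alpha_N}^{\alpha_N}T_2 = o(1)$. The dangerous term is $-\log N\int_{-\alpha_N}^{\alpha_N}T_2(v)\,dv = -\log N\left(1-o((\log N)^{-1})\right) = -\log N + o(1)$, which \emph{exactly cancels} the $+\log N$ sitting in front of $J_N$ in \eqref{ew1d}. This cancellation is the crux of why the normalization is chosen as it is, and it is also where the first alternative ($c\neq 1$) produces $\E\Wc\to\infty$: if $\int T_2 = c\neq 1$ then $-\log N\int_{-\alpha_N}^{\alpha_N}T_2 \approx -c\log N$ leaves a residual $(1-c)\log N\to\pm\infty$ (and combined with the sign convention gives $+\infty$; the footnote's caveat about $-\infty$ is consistent here since $c<1$ is the natural case). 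In the bulk regime, assumption 2) is precisely the statement that $\int_{\alpha_N}^{A}T_2(v)\log|2\sin\frac{\pi v}{N}|\,dv = o(1)$ uniformly in $A$; combined with a summation-by-parts / Abel argument to absorb the slowly varying weight $1-|v|/N$, this forces the entire bulk contribution to be $o(1)$.

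\textbf{Assembling and the main obstacle.} Putting the three regimes together, $J_N \to \log 2\pi - \log N + \int_{-\infty}^\infty\log|v|\,T_2(v)\,dv + o(1)$ when $\int\log|v|\,T_2(v)\,dv$ converges, and adding the $+\log N$ from \eqref{ew1d} yields \eqref{resth1}; when it diverges, the near-diagonal piece diverges and one checks the other pieces cannot compensate, giving the ``if and only if''. I expect the main technical obstacle to be the \emph{bulk estimate}: assumption 2) gives uniform smallness of $\int_{\alpha_N}^A T_2 \log|2\sin(\pi v/N)|$ but not of the same integral against the extra factor $(1-v/N)$, so one must run an integration-by-parts (Abel summation) argument, controlling the total variation of $v\mapsto 1-v/N$ on $[\alpha_N,N-\alpha_N]$ (which is $O(1)$) times the uniform bound $o(1)$ — the care needed is in making sure boundary terms at $v=\alpha_N$ and $v=N-\alpha_N$ are themselves $o(1)$, which uses $\log N\ll\alpha_N$ to kill a $\log(\alpha_N/N)\cdot T_2$-type boundary term, and $\alpha_N\ll N^{1/2-\epsilon}$ nowhere yet — so a second look may be needed to see where that upper bound on $\alpha_N$ is actually consumed (likely in bounding $\int_{|v|\le\alpha_N}|v|\,|T_2(v)|\,dv$ against $\alpha_N\cdot N\cdot$ something, or in the $O((\alpha_N/N)^2)$ Taylor remainder times $\log N$). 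A careful bookkeeping of which hypothesis controls which error term is the real content.
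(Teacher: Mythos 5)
Your proposal is correct and follows essentially the same route as the paper's proof: the change of variables to $v=x-y$, a near-diagonal/bulk/edge splitting at scale $\alpha_N$, the Taylor expansion $\log|2\sin\tfrac{\pi v}{N}|=\log\tfrac{2\pi|v|}{N}+O(v^2/N^2)$ on $|v|\le\alpha_N$ producing the $\log N$ cancellation (and the $(1-c)\log N$ divergence when $c\neq 1$), and assumption 2) for the bulk --- where your Abel-summation step to absorb the weight $1-|v|/N$ using the uniformity in $A$ is in fact more careful than the paper's writeup. To answer the question you flagged: the upper bound $\alpha_N\ll N^{1/2-\varepsilon}$ is consumed in the edge regime $N-\alpha_N\le |v|\le N$ (the paper's corner domains), which is \emph{not} handled ``like the near one'': there one cannot extract a limit and instead bounds $|T_2|$ by its supremum and uses the smallness of the weight $(N-|v|)/N\le\alpha_N/N$ together with the integrability of the logarithmic singularity at $|v|=N$, giving a contribution $O(\alpha_N^2\log N/N)$, which is $o(1)$ precisely because $\alpha_N^2\log N\ll N$.
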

\begin{remark} \label{2.1}
Condition 2) is satisfied by the  stronger one:
\begin{equation}\label{rem2.1}\int_B^\infty |T_2(v)|\, dv= o\( \frac{1}{\log B}\) \quad \text{as} \ B \to +\infty.\end{equation} To see this  it suffices to observe that on $[\a_N, N-\a_N]$ we have $|\sin\frac{\pi v}{N}|\ge|\sin \frac{\pi \a_N}{N}|$.
\end{remark}
\begin{proof}[Proof of Theorem \ref{th1}]
In view of  \eqref{ew1d} we need to compute
\begin{equation}\label{tocomp}
\lim_{N\to \infty}\frac{1}{N} \int_{[0,N]^2} \log \left|2\sin \frac{\pi (x-y)}{N}\right| T_2(x-y)\, dx\, dy + \log N.\end{equation}
 For $(x,y) \in [0,N]^2$ we denote $u=x+y$ and $v=x-y$.
 We then split $[0,N]^2$ into the disjoint union of the following domains, see Figure \ref{fig1},  where $\a_N$ is  in condition 2):
 \begin{eqnarray*}
& \cdot  &  D_0= \{(x, y) \in [0,N]^2, |v| \le \a_N,    \a_N \le u \le 2N- \a_N \}\\
& \cdot & D_{1} = \{(x, y)\in [0,N]^2, v\ge  N-  \a_N \} \\
& \cdot  & D_{1'}=\{ (x, y)\in [0,N]^2, v\le  -N+ \a_N \} \\
& \cdot & D_{2} = \{(x,y) \in [0,N ]^2 , u \ge 2N - \a_N\}\\
&  \cdot & D_{2'} = \{(x,y) \in [0,N]^2, u \le \a_N\}\\
& \cdot & D_3=  \{(x,y) \in [0,N ]^2 ,   \a_N \le v \le N-\a_N\}\\
& \cdot  & D_{3'} = \{(x,y) \in [0,N ]^2 , - N + \a_N  \le  v \le -\a_N\}\end{eqnarray*}
\begin{figure}\label{fig1}
\begin{center}
\includegraphics{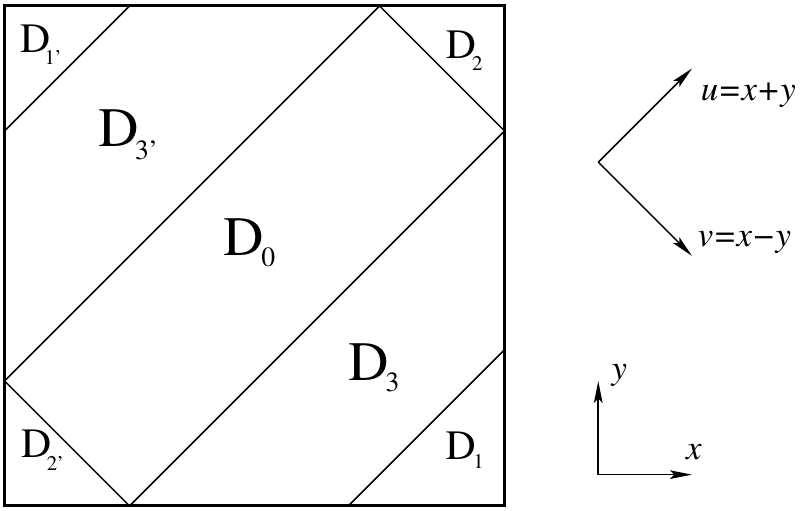}\caption{Splitting of the domain of integration}
\end{center}\end{figure}

We evaluate the integral in \eqref{tocomp} over each of these domains successively.
We start with the contribution of $D_1'$.
Making the change of variables $a=\frac{x}{N}, b=\frac{N-y}{N}$ we have
\begin{multline*}
 \int_{D_1'} \log \left|2\sin \frac{\pi (x-y)}{N}\right| T_2(x-y)\, dx\, dy
\\=  N^2 \int_{a\ge 0, b\ge 0, a+b \le\frac{\a_N}{N}} \log |2\sin \pi (a+b)|T_2(N(a+b-1))\, da\, db.\end{multline*}
Using assumption 1) and noting that in $D_1$, $\sin \pi (a+b)=\pi (a+b)+O((a+b)^3)$, and thus
$|\log |2\sin \pi (a+b)||= |\log |2\pi (a+b)||+ O(\frac{\a_N^2}{N^2})$,
we deduce
\begin{multline*}
\left|\int_{D_1'}  \log \left|2\sin \frac{\pi (x-y)}{N}\right| T_2(x-y)\, dx\, dy \right|\\
\le N^2 \sup |T_2| \int_{a\ge 0, b\ge 0, a+b \le\frac{ \a_N}{N}  } |\log |2\pi (a+b)||\, da \, db +O\(\frac{\a_N^4}{N^2}\).\end{multline*}
Using $\int_0^r \log  s \, ds= O( s |\log s|)$, it follows that
\begin{multline}\label{intd1}
\left|\int_{D_1'}  \log \left|2\sin \frac{\pi (x-y)}{N}\right| T_2(x-y)\, dx\, dy \right| \\ \le  C N^2  \sup |T_2|\frac{\a_N^2}{N^2} \left|\log \frac{\a_N}{N}\right|   +O\(\frac{\a_N^4}{N^2}\) =o(N)\end{multline} using $\alpha_N\ll N^{1/2-\ep}.$
The estimate for the domains  $D_{1}, D_{2}, D_{2'}$ is similar.
For the contribution over $D_3$, using the change of variables $(x, y) \to (u,v)$ we have
$$ \int_{D_3}  \log \left|2\sin \frac{\pi (x-y)}{N}\right| T_2(x-y)\, dx\, dy =
\hal (2N-2\a_N) \int_{\a_N}^{N-\a_N} T_2(v)\log \left|2\sin \frac{\pi v}{N}\right|\, dv.$$
This is $o(N)$ when assumption 2) holds. The estimate on $D_{3'}$ is completely analogous.
We have thus found that all contributions over $D_{1}, D_{1'}, D_2, D_{2'}, D_{3}, D_{3'}$ are negligible. The behavior of the integral will thus be determined by the contribution of $D_0$.

Changing  again the variables $(x,y) $ into $(u,v)$,
 we have
$$ \int_{D_0} \log \left|2\sin \frac{\pi (x-y)}{N}\right| T_2(x-y)\, dx\, dy =
\hal (2N-2\a_N)\int_{-\a_N}^{\a_N} T_2(v) \log \left|2\sin \frac{\pi v}{N}\right|\, dv.$$
But in $D_0$ we have $\sin \frac{\pi v}{N}= \frac{\pi v}{N}(1+ O(\frac{\a_N^2}{N^2}))$ as $N \to \infty$, hence $\log |\sin \frac{\pi v}{N}|= \log |\frac{ 2\pi v }{N}|+ O(\frac{\a_N^2}{N^2}).$
Therefore,
\begin{multline*}
\int_{D_0} \log \left|2\sin \frac{\pi (x-y)}{N}\right| T_2(x-y)\, dx\, dy \\ = (N -\a_N) \Big( - \log N \int_{-\a_N}^{\a_N} T_2(v)\, dv 
+ \int_{-\a_N}^{\a_N} \log |2\pi v| T_2(v) \, dv \Big) + \sup|T_2| O\(\frac{\a_N^3}{N}\).\end{multline*}
Using $\a_N \ll N^\hal$,  we easily deduce  that if $\int_{-\infty}^\infty T_2(v)\, dv \neq 1$ and $\int_{-\infty}^\infty \log |2\pi v|T_2(v)\, dv<\infty$, then
$$\frac{1}{N} \int_{D_0} \log \left|2\sin \frac{\pi (x-y)}{N}\right| T_2(x-y)\, dx\, dy + \log N \to \infty$$as $N \to \infty$, and we conclude as desired.
If $\int_{-\infty}^\infty T_2(v)\, dv =1$, then we may proceed and find
\begin{multline}\label{d0}
\frac{1}{N} \int_{D_0} \log \left|2\sin \frac{\pi (x-y)}{N}\right| T_2(x-y)\, dx\, dy + \log N=
 \int_{-\a_N}^{\a_N}  \log |2\pi v|T_2(v)\, dv \\
  + \log N \(1-\int_{-\a_N}^{\a_N} T_2(v)\, dv\)  +O\(\frac{\a_N}{N}  \log N \) + O\( \frac{\a_N^3}{N}\)   \end{multline}
 and we conclude (returning to \eqref{ew1d} and  \eqref{tocomp}, and using the assumptions)  that \eqref{resth1} holds.
\end{proof}

\subsection{Specific computations on the line}\label{sec:speci}
In this subsection, we use the result from the previous theorem to compute explicit asymptotic values for $\E \Wc$ for some well-known point processes, namely the homogeneous  Poisson process, and the $\beta$-sine processes with $\beta=1,2,4$. (Recall that $\Wc$ was defined in such a way that
for the lattice $\mz$ we have  $\Wc =  0$.)

The homogeneous Poisson process satisfies $\ro_1(x)=1$ and  $\ro_2(x,y)=1$, hence  $T_2=0$. We immediately deduce from \eqref{ew2d} that  $\E \Wc \to + \infty$. 
Hence, the Poisson process can be viewed as having the `value of $W$' equal to $+\infty$.

The $\beta$-sine processes for $\beta=1,2,4$ arise in random matrices as the local limits for random matrix ensembles with orthogonal, Hermitian, and symplectic symmetries, see \cite{mehta,agz,forrester} and references therein.
These are stationary processes whose correlations can be computed as follows: introduce the  kernels
\begin{align}\label{k2} &  K^{(2)}(x,y)= \frac{\sin \pi(x-y)}{\pi(x-y)} &\quad \text{for } \ \beta=2\\
\label{k1} & K^{(1)}(x,y)= \D\left(\begin{array}{lr}\D \frac{\sin\pi(x-y)}{\pi(x-y)}
&  \D\frac{\p }{\p x} \frac{\sin\pi(x-y)}{\pi(x-y)} \\[2mm]
\D\frac 1\pi\int_0^{\pi(x-y)} \frac{\sin t}{t}\, dt - \D\hal sgn(x-y) & 
\D\frac{\sin\pi(x-y)}{\pi(x-y)}\end{array}\right)   & \quad \text{for } \ \beta=1\\
\label{k4}& K^{(4)}(x,y)= \D\left(\begin{array}{lr}\D\frac{\sin 2\pi(x-y)}{2\pi(x-y)} & \D \frac{\p }{\p x} \frac{\sin 2\pi(x-y)}{2\pi(x-y)}\\[2mm]
\D\frac{1}{2\pi}\int_0^{2\pi(x-y)} \frac{\sin t}{t}\, dt &\D
\frac{\sin 2\pi(x-y)}{2\pi(x-y)}\end{array}\right)& \quad \text{for } \ \beta=4\end{align}
where all the indeterminacies $0/0$ at $x=y$ are resolved by continuity. \\
The $\beta=2$ sine process is a determinantal process with kernel $K^{(2)}(x,y)$, thus from \eqref{rodet}, its $2$-point correlation function is given by 
\begin{equation}\label{rodet2}
\ro_2(x_1,x_2)= \det \(K^{(2)}(x_i, x_j)\)_{i,j \in [1,2]}.
\end{equation}
The correlation functions for the $\beta=1,4$ sine processes have the form
\begin{equation}\label{qdet}
\ro_n(x_1,\dots, x_n)= \mathrm{qdet} \(K^{(\beta)}(x_i, x_j)\)_{i,j \in [1, n]},\qquad n=1,2,\dots
\end{equation}
where $\mathrm{qdet}$ denotes the quaternion determinant, see e.g. \cite[Section 6.1.1]{forrester}
for a definition. Alternatively, the right-hand side of \eqref{qdet} can be expressed
as the Pfaffian of a closely related matrix, cf. \cite[Proposition 6.1.5]{forrester}. 
Random point processes with correlation functions of such form are often called \emph{Pfaffian},
see \cite[Section 10]{Bor11} and references therein. 

The three processes above satisfy $\ro_1\equiv 1$, 
and their second cluster functions can be easily seen to be given by 
\begin{eqnarray}\label{t21}
& T_2^{(2)} (v)& =\(\frac{\sin \pi v}{\pi v}\)^2\qquad \text{for } \ \beta=2\\
\label{t22}
& T_2^{(1)}(v)& = \( \frac{\sin \pi v}{\pi v}\)^2 - \frac{1}{\pi} \frac{\p }{\p v} \frac{\sin \pi v}{\pi v} \(\int_0^{\pi v} \frac{\sin t}{t}\, dt- \frac{\pi}{2}sgn(v)\) \qquad \text{for } \ \beta=1\\
 \label{t24}& T_2^{(4)} (v) & =  \(\frac{\sin  2 \pi v}{2\pi v }\)^2- \frac{1}{2\pi} \frac{\p }{\p v} \frac{\sin 2\pi v}{2\pi v}\int_0^{2\pi v} \frac{\sin t}{t}\, dt \qquad \text{for } \ \beta=4
.\end{eqnarray}

\begin{pro} \label{pro23}
The $\beta$-sine processes for $\beta=1,2,4$ satisfy the assumptions of Theorem \ref{th1}, and
\begin{eqnarray*}
&\D  \lim_{N\to \infty} \E \Wc  & = 1-\gamma
 \qquad \text{for } \ \beta=2\\
 & \D  \lim_{N\to \infty} \E \Wc  & = 2-\gamma - \log 2
\qquad \text{for } \ \beta=1\\
 &   \D\lim_{N\to \infty} \E \Wc & = \frac{3}{2}-\log 2-\gamma\qquad \text{for } \ \beta=4,\end{eqnarray*}
where $\gamma$ is the Euler constant.
\end{pro}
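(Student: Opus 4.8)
The plan is to check that each of the cluster functions $T_2^{(\b)}$, $\b=1,2,4$, given in \eqref{t21}--\eqref{t24}, satisfies all the hypotheses needed for the second (finite-limit) alternative of Theorem \ref{th1}, and then to read off $\lim_{N\to\infty}\E\Wc$ by evaluating the single real integral $c_\b:=\int_\mr\log|v|\,T_2^{(\b)}(v)\,dv$, since \eqref{resth1} then gives $\lim_{N\to\infty}\E\Wc=\log 2\pi+c_\b$.

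\emph{Verifying the hypotheses of Theorem \ref{th1}.} Each $T_2^{(\b)}$ is $\bigl(\tfrac{\sin cv}{cv}\bigr)^2$, with $c=\pi$ for $\b=1,2$ and $c=2\pi$ for $\b=4$, plus a product of $\tfrac{\p}{\p v}\tfrac{\sin cv}{cv}$ with a bounded function; hence $T_2^{(\b)}$ is continuous and bounded — the $\mathrm{sgn}$-jump in $T_2^{(1)}$ being harmless since $\tfrac{\p}{\p v}\tfrac{\sin cv}{cv}$ vanishes at $v=0$ — which is assumption 1). From $\int_B^\infty\tfrac{\sin t}{t}\,dt=\tfrac{\cos B}{B}+O(B^{-2})$ and $\tfrac{\p}{\p v}\tfrac{\sin cv}{cv}=O(1/v)$ one gets $T_2^{(\b)}(v)=O(v^{-2})$ as $|v|\to\infty$; hence $\int_B^\infty|T_2^{(\b)}|=O(1/B)=o(1/\log B)$, so Remark \ref{2.1} gives assumption 2) (with, e.g., $\a_N=N^{1/4}$), and the same decay makes $c_\b$ convergent ($\log|v|$ is integrable at $0$ and against $v^{-2}$ at infinity). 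The normalization $\int_\mr T_2^{(\b)}=1$ holds in all three cases: for $\b=2$ by Parseval applied to the pair $\tfrac{\sin\pi v}{\pi v}\leftrightarrow\1_{[-1/2,1/2]}$, and for $\b=1,4$ by an integration by parts replacing $\int(\tfrac{\sin cv}{cv})'\,\Phi_\b\,dv$ by $-\int\tfrac{\sin cv}{cv}\,\Phi_\b'\,dv$ together with the elementary identity $\int_\mr\tfrac{\sin^2 cv}{v^2}\,dv=\pi c$. Finally $1-\int_{-\a_N}^{\a_N}T_2^{(\b)}=\int_{|v|>\a_N}T_2^{(\b)}=O(1/\a_N)=o(1/\log N)$ since $\a_N\gg\log N$. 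Thus Theorem \ref{th1} applies and $\lim_{N\to\infty}\E\Wc=\log 2\pi+c_\b$.

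\emph{Computing $c_\b$.} For $\b=2$ the substitution $x=\pi v$ together with the classical values $\int_0^\infty\tfrac{\sin^2 x}{x^2}\,dx=\tfrac\pi2$ and $\int_0^\infty\tfrac{\sin^2 x}{x^2}\log x\,dx=\tfrac\pi2(1-\gamma-\log 2)$ yields $c_2=1-\gamma-\log 2\pi$, hence $\lim\E\Wc=1-\gamma$. For $\b=1,4$ one splits $T_2^{(\b)}$ into the $\bigl(\tfrac{\sin cv}{cv}\bigr)^2$ part — handled as for $\b=2$ after $x=cv$ — and the cross term $-\tfrac1c\tfrac{\p}{\p v}\tfrac{\sin cv}{cv}\,\Phi_\b(v)$, where $\Phi_1(v)=\int_0^{\pi v}\tfrac{\sin t}{t}\,dt-\tfrac\pi2\mathrm{sgn}(v)$ and $\Phi_4(v)=\int_0^{2\pi v}\tfrac{\sin t}{t}\,dt$. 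The $\log|v|$-weighted cross-term integral is evaluated through integration by parts, using $(\tfrac{\sin cv}{cv})'\Phi_\b=\bigl((\tfrac{\sin cv}{cv})\Phi_\b\bigr)'-\tfrac{\sin cv}{cv}\Phi_\b'$ and handling the singularity at $v=0$ (and, for $\b=1$, the $\mathrm{sgn}$-discontinuity) with a cutoff $\delta\le|v|\le R$; this reduces $c_\b$ to tabulated integrals such as $\int_\mr\tfrac{\sin^2 cv}{v^2}\log|v|\,dv$ and $\int_\mr\tfrac{\sin cv}{v^2}\bigl(\int_0^{cv}\tfrac{\sin t}{t}\,dt\bigr)\,dv$. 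Collecting the pieces gives $c_1=2-\gamma-\log 2-\log 2\pi$ and $c_4=\tfrac32-\gamma-\log 2-\log 2\pi$, hence $\lim\E\Wc=2-\gamma-\log 2$ for $\b=1$ and $\tfrac32-\log 2-\gamma$ for $\b=4$. Alternatively, one may use Parseval with the explicit ``form factors'' $\widehat{T_2^{(\b)}}$ and the distributional formula for the Fourier transform of $\log|\cdot|$; this is quickest for $\b=2$, where $\widehat{T_2^{(2)}}(\xi)=(1-|\xi|)_+$. As a consistency check, all three limiting values equal $\Psi(1+\tfrac\b2)-\log\tfrac\b2$, Dyson's energy per particle, in agreement with the introduction.

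\emph{Main obstacle.} The delicate step is the evaluation of $c_1$ and $c_4$: the cross-term integrals are only conditionally convergent oscillatory integrals carrying simultaneously the weight $\log|v|$, the sine integral, and (for $\b=1$) a jump, so the integrations by parts must be arranged so that the several diverging $\log\delta$ and $\log R$ boundary contributions cancel before the limits $\delta\to0$, $R\to\infty$ are taken. It is this bookkeeping, rather than any individual tabulated integral, that requires care.
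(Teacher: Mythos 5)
Your overall strategy is the same as the paper's: verify the hypotheses of Theorem \ref{th1} for each $T_2^{(\b)}$ and then evaluate $\int_\mr\log|v|\,T_2^{(\b)}(v)\,dv$ by integration by parts and tabulated integrals. The final values are correct, and the $\b=1,2$ cases are handled essentially as in the paper. However, there is a genuine gap in your verification of the hypotheses for $\b=4$.

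The claim that $T_2^{(4)}(v)=O(v^{-2})$ is false. In \eqref{t24} the sine-integral factor is $\int_0^{2\pi v}\tfrac{\sin t}{t}\,dt$, which tends to $\pm\tfrac{\pi}{2}$ as $v\to\pm\infty$ — unlike the $\b=1$ case, where the subtraction of $\tfrac{\pi}{2}\,\mathrm{sgn}(v)$ makes the corresponding factor $O(1/v)$. Since $\tfrac{\p}{\p v}\tfrac{\sin 2\pi v}{2\pi v}=\tfrac{\cos 2\pi v}{v}+O(v^{-2})$, the cross term in $T_2^{(4)}$ behaves like $-\tfrac{\cos 2\pi v}{4v}$ at infinity, so $T_2^{(4)}$ is only $O(1/|v|)$ and $\int_B^\infty|T_2^{(4)}(v)|\,dv=+\infty$. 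Consequently Remark \ref{2.1} (i.e.\ the sufficient condition \eqref{rem2.1}) cannot be used to obtain assumption 2) for $\b=4$: one must verify
$$\int_{\a_N}^{A}\frac{\p}{\p v}\frac{\sin 2\pi v}{2\pi v}\,\log\left|2\sin\frac{\pi v}{N}\right|\,dv=o(1)\quad\text{uniformly in }A\in[\a_N,N-\a_N]$$
directly, by integrating by parts and exploiting the oscillation of $\sin 2\pi v$ together with the bound $\bigl|\tfrac{\p}{\p v}\log|2\sin\tfrac{\pi v}{N}|\bigr|\le \tfrac{C}{\a_N}$ on that range; this is exactly what the paper does. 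The same issue affects two other points you dispose of via the alleged $O(v^{-2})$ decay: the estimate $1-\int_{-\a_N}^{\a_N}T_2^{(4)}=O(1/\a_N)$ and the convergence of $\int_\mr\log|v|\,T_2^{(4)}(v)\,dv$ both hold only by virtue of oscillatory cancellation (integration by parts producing an $O(1/\a_N)$ boundary term), not by absolute integrability. Your closing paragraph does acknowledge that the cross-term integrals are conditionally convergent, but that acknowledgment contradicts the $O(v^{-2})$ claim on which your verification of the hypotheses rests; as written, the $\b=4$ case of the proof does not go through without supplying the direct oscillatory argument for assumption 2).
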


Before stating the proof, we recall some integrals of classical functions that we will need. We state them without a proof and refer to \cite{pbm1,pbm2}.
\begin{eqnarray}
\label{dirichlet}
   \int_0^{+\infty} \frac{\sin v }{v} \, dv & = & \frac{\pi}{2}
   \\
  \label{2632}
  \int_0^{+\infty} \frac{\sin v}{v} \log v \, dv   &  =  &\frac{\pi}{2}\gamma
  \\
 \label{2569} \int_0^{+\infty}\(\frac{\sin v}{v}\)^2 \, dv & = & \frac{\pi}{2}
 \\
\label{225} \int_0^{+\infty} \(\frac{\sin v}{v}\)^2 \log v \, dv & = & - \frac{\pi}{2} (\gamma+\log 2 - 1).\end{eqnarray}
 These formulas can be found in \cite{pbm1}, they are respectively (2.5.3.12), (2.6.32.3), (2.5.6.9), and (2.6.32.7).
 Finally we need a few more  integrals that are based on
 $$Si(x)=\int_0^x \frac{\sin t}{t}\, dt , \quad si(x)= Si(x)- \frac{\pi}{2} = -\int_{x}^{+\infty}  \frac{\sin t}{t} \, dt$$ and
 $$Ei(x)= \int_{-\infty}^x \frac{e^t}{t}\, dt.$$
 These are
 \begin{eqnarray}
 \label{26411}
 \int_0^{+\infty} \frac{ si(v) \sin v}{v^2+z^2} \, dv & = &\frac{\pi}{2z}\sinh(z)Ei(-z)\\
\label{26210}
  \int_0^{+\infty} \frac{v \, si(v)}{v^2+z^2} \, dv & = & \frac{\pi}{2}\,Ei(-z)\\
 \label{264}  \int_0^{+\infty}\frac{Si(v) \sin v  }{v^2+z^2 } \, dv &=& \frac{\pi}{4}e^{-z}\,\frac{Ei(z)-Ei(-z)}{z}\,,\end{eqnarray}
  cf. respectively (2.6.4.11), (2.6.2.10) and (2.6.4.16) in \cite{pbm2}.

\begin{proof}[Proof Proposition \ref{pro23}]
We start with the simplest case.
\\
{\it Case $\beta=2$:} First it is easy to check that assumptions \eqref{basicass} as well as assumption 1) of Theorem \ref{th1} are verified. Also, we have  $T_2(v)=O(|v|^{-2})$ as $v\to \infty$, 
and $1-\int_{-\a_N}^{\a_N} T_2=O((\a_N)^{-1})=o((\log N)^{-1})$, thus assumption 2) is  implied by \eqref{rem2.1}.
According to Theorem \ref{th1} we then have
$$  \lim_{N\to \infty} \E\Wc   = \int_{-\infty}^\infty \log |2\pi v|
\(\frac{\sin \pi v}{\pi v}\)^2
 \, dv. 
$$
{}From \eqref{225}, we obtain
 \begin{equation}\label{intsinlog}
 \int_{-\infty}^\infty \left(\frac{\sin\pi v}{\pi v}\right)^2 \log|2\pi v|\, dv=1-\gamma,\end{equation}
 hence the result.
 \\
 {\it Case  $\beta=1$:}
 Similarly to the case of $\beta=2$, one checks that $T_2(v)=O(|v|^{-2})$. Indeed, note that $\int_0^{\pi |v|} \frac{\sin t}{t} \, dt - \frac{\pi}{2} = \int_{\pi |v|}^{+\infty} \frac{\sin t}{t}\, dt= \frac{\cos (\pi |v|) }{\pi |v|} -\int_{\pi |v|}^{+\infty} \frac{\cos t}{t^2}= O({|v|^{-1}})$.
 Assumptions \eqref{basicass} and  1), 2) are then  verified as in the $\beta =2$ case.
  Next we check that $\int T_2^{(1)}=1$.
 By evenness and integration by parts
 \begin{multline*}\int_{-\infty}^\infty T_2^{(1)}(v)\, dv= 2 \int_{0}^\infty
 \left( \( \frac{\sin \pi v}{\pi v}\)^2 - \frac{1}{\pi} \frac{\p }{\p v} \frac{\sin \pi v}{\pi v} \(\int_0^{\pi v} \frac{\sin t}{t}\, dt- \frac{\pi}{2}\)\right)dv\\
 =4\int_0^\infty \( \frac{\sin \pi v}{\pi v}\)^2 \, dv -1 =1.\end{multline*}
  The convergence is also fast enough, since $T_2(v) = O(|v|^{-2})$.

 According to Theorem \ref{th1} we thus have
 \begin{multline*} \lim_{N\to \infty} \E \Wc   =  \int_{-\infty}^\infty \log | 2\pi v| \( \( \frac{\sin \pi v}{\pi v}\)^2 - \frac{1}{\pi} \frac{\p }{\p v}
 \frac{\sin \pi v}{\pi v} \(\int_0^{\pi v} \frac{\sin t}{t}\, dt- \frac{\pi}{2}sgn(v)\)\) \, dv\\ =
2  \int_{0}^\infty \log |2\pi  v| \( \( \frac{\sin \pi v}{\pi v}\)^2 - \frac{1}{\pi} \frac{\p }{\p v} \(\frac{\sin \pi v}{\pi v}-1\) \(\int_0^{\pi v} \frac{\sin t}{t}\, dt- \frac{\pi}{2}\)\) \, dv.
 \end{multline*}
 Integrating by parts, we are led to
 \begin{multline}\label{rfg1}
 \lim_{N\to \infty} \E \Wc= 4  \int_{0}^\infty \log (2 \pi v)\(\frac{\sin \pi v}{\pi v}\)^2\, dv+\frac{2}{\pi}   \int_0^\infty \(\frac{\sin \pi v}{\pi v}-1\) \( \int_0^{\pi v} \frac{\sin t}{t}-\frac{\pi}{2}\) \frac{dv }{v}\\
-\frac 2\pi\int_0^{+\infty} \log(2\pi v)\,\frac{\sin \pi v}{v}\,dv .\end{multline}
 In view of \eqref{intsinlog} the first term on the right-hand side is equal to $2-2\gamma$.
 The second is equal to
 \begin{multline*} \frac{2}{\pi} \int_0^{+\infty} \(\frac{\sin \pi v}{\pi v}-1\) si(\pi v) \frac{dv}{v} = \frac{2}{\pi} \int_0^{+\infty} \( \frac{\sin u}{u}-1\) si(u) \frac{du}{u}
\\  = \frac{2}{\pi}  \lim_{z\to 0} \int_0^\infty \( \frac{si(u) \sin u }{u^2+z^2} - \frac{si(u) u}{u^2+z^2}\) \, du.
\end{multline*}
With \eqref{26411} and \eqref{26210},
$$ \frac{2}{\pi} \int_0^{+\infty} \(\frac{\sin \pi v}{\pi v}-1\) si(\pi v) \frac{dv}{v} = \lim_{z\to 0}  Ei(-z) \( \frac{\sinh(z)}{z}-1\).$$
On the other hand, near $x=0$ one has \begin{equation}\label{expei}
Ei(x)=\gamma + \log |x|+ \sum_{n=1}^\infty \frac{x^n}{n n!}, \end{equation} hence the above right-hand side limit is $0$. The second term in \eqref{rfg1} thus vanishes.  From \eqref{2632} and \eqref{dirichlet}, the third one is equal to   $\gamma-\log 2.$ This concludes the case $\beta=1$.
 \\
 {\it Case $\beta=4$:}
 Assumptions \eqref{basicass}  and 1) are easily verified, we proceed to 2).
 We may write
 $$T_2(v)= \( \(\frac{\sin 2\pi v}{2\pi v}\)^2 - \frac{1}{2\pi} \frac{\p }{\p v} \frac{\sin 2\pi v}{2\pi v} \( \int_0^{2\pi v} \frac{\sin t}{t}\, dt - \frac{\pi}{2} sgn(v) \) \) - \frac{1}{4}\frac{\p}{\p v} \frac{\sin 2\pi v}{2\pi v}sgn(v)
.
$$
The first part is $O(|v|^{-2})$ just as in the case above, so it remains to check that
$$\int_{\a_N}^A \frac{\p }{\p v}\frac{\sin 2\pi v}{2\pi v} \log \left|2\sin \frac{\pi v}{N}\right|\, dv=o(1)$$ uniformly in $A\in [\a_N, N-\a_N].$
Integrating by parts, we have
\begin{multline*}
\int_{\a_N}^A \frac{\p }{\p v}\frac{\sin 2\pi v}{2\pi v} \log \left|2\sin \frac{\pi v}{N}\right|\, dv\\
= \frac{\sin 2\pi A}{2\pi A} \log \left|2\sin \frac{\pi A}{N}\right|- \frac{\sin 2\pi \a_N}{2\pi \a_N}\log \left|2\sin \frac{\pi \a_N}{N}\right| - \int_{\a_N}^A \frac{\sin 2\pi v}{2 v} \frac{\cos \frac{\pi v}{N}}{N\sin \frac{\pi v}{N}}\, dv.\end{multline*}
In $[\a_N, A]$ we may bound from below $\sin \frac{\pi v}{N} $ by $\sin \frac{\pi \a_N}{N}$, which is asymptotically
 equivalent to $\frac{\pi \a_N}{N}$ as $N \to \infty$. Hence the integral on the right hand side may by bounded by
$\frac{C}{\a_N}\int_{\a_N}^N\frac{dv}{v}\le \frac{C\log N}{\a_N} =o(1)$ in view of assumption 4).
The other terms are also easily found to be $o(1)$ by a similar argument.

We next check that $\int T_2=1$ with fast enough convergence.
 By evenness and integration by parts
 \begin{multline*}\int_{-\a_N}^{\a_N} T_2^{(4)}(v)\, dv= 2 \int_{0}^{\a_N}\left(
  \( \frac{\sin 2\pi v}{2\pi v}\)^2 - \frac{1}{2\pi} \frac{\p }{\p v} \frac{\sin 2 \pi v}{2 \pi v} \(\int_0^{2\pi v} \frac{\sin t}{t}\, dt\)\right) dv\\
 =4\int_0^{\a_N} \( \frac{\sin 2\pi v}{2\pi v}\)^2 \, dv - \frac{1}{2\pi} \frac{\sin 2\pi \a_N}{\a_N} \int_0^{2\pi \a_N} \frac{\sin t}{t}\, dt  =1+O((\a_N)^{-1}).\end{multline*}
 According to Theorem \ref{th1} we thus have
 \begin{equation*} \lim_{N\to \infty} \E \Wc =  \int_{-\infty}^\infty \log |2\pi v|
 \(
  \(\frac{\sin  2 \pi v}{2\pi v }\)^2- \frac{1}{2\pi} \frac{\p }{\p v} \frac{\sin 2\pi v}{2\pi v}\int_0^{2\pi v} \frac{\sin t}{t}\, dt\)
 \, dv.\end{equation*}
 Using evenness and integration by parts as above, we find
\begin{equation*}
 \lim_{N\to \infty} \E \Wc
 =4 \int_0^\infty  \(\frac{\sin  2 \pi v}{2\pi v }\)^2 \log | 2\pi v|\, dv
 +\frac{1}{\pi} \int_0^\infty \frac{\sin 2\pi v}{2\pi v^2}\( \int_0^{2\pi v} \frac{\sin t}{t}\, dt\) \, dv.\end{equation*}

By \eqref{225} the first term on the right-hand side is equal to $-\gamma - \log 2 +1$. By change of variables,  the second term is equal to
$$\frac{1}{\pi} \int_0^{+\infty} \frac{Si(u) \sin u}{u^2}\, du= \frac{1}{\pi} \lim_{z\to 0} \int_0^{+\infty}   \frac{Si(u) \sin u}{ u^2+z^2}\, du= \frac{1}{4} \lim_{z\to 0} e^{-z} \frac{Ei(z)-Ei(-z)}{z}  $$
by \eqref{264}.  Combining with \eqref{expei} we find that the second term is equal to $\hal$ and we conclude the proof.
 \end{proof}
 As expected $\lim_{N\to \infty}\E \Wc$ decreases as $\beta=1,2,4$ increases.

\subsection{Expectation for two-dimensional processes : theoretical formula}
In the plane, the computations are easier because we can take advantage of the fast (exponential) decay of the correlation kernels.
\begin{theo}\label{th2}
 Consider a random point process  $\X$ in   the plane, with a  one point correlation function $\ro_1(x) $ and a two-point correlation function $\ro_2(x,y)$, satisfying \eqref{basicass}.  We identify the plane with the complex plane $\mc$.  Under the assumption
   $$\int_{\mr^2} |v|^k |T_2(v)|\, dv <\infty \qquad \text{for } \ k=1,2,3;$$
 the following holds:
 \\
 - if $\D \int_{\mr^2} T_2(v)\, dv=c \neq 1 $, we have $\E\Wc \to  \infty $ as $N\to \infty$;\\
 - if $\D \int_{\mr^2} T_2(v)\, dv =1$ and $1-\int_{[-N,N]^2} T_2(v)\, dv=  o((\log N)^{-1})$, then  $\lim_{N\to \infty} \E \Wc $ exists and is finite if and only if   $\int_{\mr^2} T_2(v)\log |v| \, dv$ converges, and if so,  then
 \begin{equation}\label{resth2}
 \lim_{N\to \infty} \E \Wc =  \int_{\mr^2} \log | v|\, T_2(v) \, dv.\end{equation}
\end{theo}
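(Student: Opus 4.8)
The plan is to follow the scheme of the proof of Theorem~\ref{th1}, with the simplification that in the plane a single split of the domain into a central and a peripheral part replaces the two‑scale cutoff $\a_N$ used in dimension one. First I would integrate out the center of mass in \eqref{ew2d}: for $x,y\in[0,N]^2$ the fibre of $(x,y)\mapsto v:=x-y$ over $v\in[-N,N]^2$ has area $\omega_N(v):=(N-|v_1|)(N-|v_2|)$. Recall from Lemma~\ref{Wnper} and Proposition~\ref{form2d} that $E_N=2\pi G$ with $G$ the Green function of $\T_N$, that $G(v)=-\log|v|+R(v)$ with $R$ the regular part, and that the additive constant $\log\frac{N}{2\pi\eta(i)^2}$ in \eqref{wc2d2} is exactly the constant $R(0)=\lim_{v\to0}(G(v)+\log|v|)$ computed there. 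Then \eqref{ew2d} becomes
\[
\E\Wc=\frac1{N^2}\int_{[-N,N]^2}\omega_N(v)\bigl(\log|v|-R(v)\bigr)T_2(v)\,dv+R(0).
\]
By the scaling $G(v)=G_1(v/N)$, with $G_1$ the Green function of the unit torus, one has $R(v)=\log N+R_1(v/N)$, where $R_1$ is a fixed Lipschitz function on $[-1/2,1/2]^2$ (smooth, since it solves $\Delta R_1=2\pi$ there), so that $R(v)-R(0)=O(|v|/N)$ uniformly for $v$ in the central square $D_{\mathrm{in}}:=[-N/2,N/2]^2$. Moreover the only periodic copy of the singularity of $G$ lying in the interior of $[-N,N]^2$ is the origin, the eight remaining copies sitting on $\partial[-N,N]^2$ where $\omega_N$ vanishes (to first order at mid‑edges, second order at corners), which absorbs the logarithmic blow‑up of $G$ and yields the crude bound $|\omega_N(v)G(v)|\le CN^2\log N$ on $D_{\mathrm{out}}:=[-N,N]^2\setminus D_{\mathrm{in}}$.

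On $D_{\mathrm{out}}$ I keep the term in the form $-N^{-2}\int_{D_{\mathrm{out}}}\omega_N G\,T_2$; the bound above together with $\int_{|v|_\infty\ge N/2}|T_2|\le\frac2N\int_{\mr^2}|v|\,|T_2|=O(1/N)$ gives a contribution $O(N^{-1}\log N)=o(1)$. On $D_{\mathrm{in}}$ I write $\omega_N(v)/N^2=1+\rho_N(v)$ with $|\rho_N(v)|\le C(|v|/N+|v|^2/N^2)$ and expand the integrand; its main part is
\[
\int_{D_{\mathrm{in}}}\log|v|\,T_2(v)\,dv-\int_{D_{\mathrm{in}}}\bigl(R(v)-R(0)\bigr)T_2(v)\,dv-R(0)\int_{D_{\mathrm{in}}}T_2(v)\,dv,
\]
while the remainder, collecting every term carrying the factor $\rho_N$ (and using $|R(v)|\le C+\log N$ on $D_{\mathrm{in}}$), is $O(N^{-1}\log N)$; this is where the moment hypotheses $\int_{\mr^2}|v|^k|T_2|<\infty$, $k=1,2,3$, are consumed, through $|v|\,|\log|v||\le C(1+|v|^2)$ and $|v|^2|\log|v||\le C(1+|v|^3)$. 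By dominated convergence — the integrand dominated for $|v|\to\infty$ by $|v|\,|T_2|\in L^1$, $\log|v|$ being locally integrable against $T_2$ — the first integral tends to $\int_{\mr^2}\log|v|\,T_2$ whenever this converges; the Lipschitz estimate makes the second $O(1/N)$; and $\int_{D_{\mathrm{in}}}T_2=\int_{\mr^2}T_2+O(1/N)$ by the tail bound.

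Collecting everything,
\[
\E\Wc=\int_{\mr^2}\log|v|\,T_2(v)\,dv+R(0)\Bigl(1-\int_{\mr^2}T_2\Bigr)+o(1),\qquad R(0)=\log\frac{N}{2\pi\eta(i)^2}\sim\log N.
\]
If $\int_{\mr^2}T_2=c\neq1$, the middle term is of order $(1-c)\log N$ and $\E\Wc$ diverges. If $\int_{\mr^2}T_2=1$ it vanishes; the hypothesis $1-\int_{[-N,N]^2}T_2=o((\log N)^{-1})$ — which, at cost $O(1/N)$, also holds with $[-N,N]^2$ replaced by $D_{\mathrm{in}}$ — turns the residual $R(0)$‑contributions into $o(1)$, so $\E\Wc\to\int_{\mr^2}\log|v|\,T_2$, which is the finite limit exactly when this integral converges (and when it diverges the same reduction shows $\E\Wc$ has no finite limit). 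This establishes the two bullets.

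The main obstacle I anticipate is not any single estimate but the organization of the cancellation: $R(v)$ itself grows like $\log N$, so one cannot pass to the limit term by term inside $\int_{[-N,N]^2}\omega_N(\log|v|-R(v))T_2$; the argument works only because $-R(v)$ is paired with the $+R(0)$ produced by the normalization in \eqref{wc2d2} \emph{before} any estimate is made, and because every leftover error must be shown to be $o((\log N)^{-1})$ in order to survive multiplication by $R(0)\sim\log N$ — precisely the role of the three moment conditions and of the decay rate imposed on $1-\int_{[-N,N]^2}T_2$. A secondary, more technical point is the behaviour of $G$ near the periodic copies of its logarithmic singularity on $\partial[-N,N]^2$, but this is defused by the vanishing there of the overlap weight $\omega_N$.
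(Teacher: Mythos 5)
Your proof is correct and follows essentially the same route as the paper's: the change of variables to $v=x-y$ with the overlap area $\omega_N(v)$, the expansion $G(v)=-\log|v|+\log\frac{N}{2\pi\eta(i)^2}+O\left(\frac{|v|}{N}\right)$ so that the divergent constant pairs with the normalization in \eqref{wc2d2} to produce $R(0)\bigl(1-\int T_2\bigr)$, and error control via the three moment conditions. Your only deviations are organizational refinements — you obtain the expansion from the scaling $G(v)=G_1(v/N)$ and the smoothness of the regular part of the unit-torus Green function instead of re-running the Kronecker limit formula asymptotics, and you explicitly split off the outer region $[-N,N]^2\setminus[-N/2,N/2]^2$, where the expansion of $E_N$ breaks down near the periodic copies of its singularity, handling it via the vanishing of $\omega_N$ there and the decay of $T_2$ (a point the paper passes over by applying \eqref{asE} on all of $[-N,N]^2$).
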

\begin{proof}
Returning to \eqref{ew2d} we have to compute
$$\lim_{N\to \infty} -\frac{1}{2\pi N^2}\int_{[0,N]^2 \times [0,N]^2 } E(x-y) T_2(x-y)\, dx \, dy +
\log \frac{N}{2\pi \eta(i)^2}.$$

Making the change of variables $(u,v)= (x+y,x-y)$, we have
$$\int_{[0,N]^2 \times [0,N]^2}E_N(x-y)   T_2(x-y)\, dx \, dy = \frac{1}{4} \int_{v\in [-N,N]^2}\int_{ u \in S_N(v)}E(v)  T_2(v)\,du \, dv,$$
where $S_N(v)=\{ x+y: x\in [0,N],\, y\in [0,N]^2,\, x-y=v\}$. We may compute that
$|S_N(v)|= 4N^2 - 2N|v_1|- 2N|v_2|+|v_1||v_2|,$
so
\begin{multline}\label{et1}
\int_{[0,N]^2\times [0,N]^2} E_N(x-y)
 T_2(x-y)\, dx \, dy \\ = \frac{1}{4}\int_{[-N,N]^2} \left(4N^2 - 2N|v_1|- 2N|v_2|+|v_1||v_2|\right) E_N(v) T_2(v)\, dv.\end{multline}

Next we return to \eqref{klf} where $f$ is given by \eqref{f} and perform an asymptotic analysis as $N\to \infty$.
We have
$$p^{1/2}-p^{-1/2}= e^{i\pi \frac{\overline{x}}{N} } - e^{-i \pi \frac{\overline{x}}{N}}=
2i \pi \frac{\overline{x}}{N}+O\(\frac{|x|^2}{N^2}\),$$
while, since $p=1+ O(\frac{|x|}{N})$ we may write (with $q=e^{-2\pi}$) 
$$
q^{1/12}\prod_{k\ge 1} (1-q^k p)(1-q^k/p)= \( q^{1/12} \prod_{k \ge 1} (1-q^k) \)  (1+ O(p-1))= \eta(i)^2 + O\left(\frac{|x|}{N}\right),$$ hence
$$f\left(\frac{\overline{x}}{N}, i\right) = 2i \pi \frac{\overline{x}}{N} \eta(i)^2 \left(1+O\left(\frac{|x|}{N}\right)\right).$$
Inserting into \eqref{klf} and combining with $e^{-\pi (\Im \frac{x}{N})^2 }=1+O(\frac{|x|^2}{N^2})$ we obtain
\begin{equation}\label{asE}
E_N(x)= -2\pi  \log |x|-2\pi \log \frac{2\pi \eta(i)^2 }{N}+O\left( \frac{|x|}{N}\right)\quad \text{as} \ N \to \infty.\end{equation}

Inserting this into \eqref{et1}, we are led to
\begin{multline*}
\int_{[0,N]^2\times [0,N]^2 } E_N(x-y) T_2(x-y)\, dx \, dy \\
=  \int_{[-N,N]^2 }   \left(N^2 - \hal N|v_1|- \hal N|v_2|+\frac{1}{4}|v_1||v_2|\right) \(  -2\pi  \log |v|-2\pi \log \frac{2\pi \eta(i)^2 }{N}+O\left( \frac{|v|}{N}\right)\) T_2(v) \, dv.\end{multline*}
Therefore
\begin{multline*} - \frac{1}{2\pi N^2} \int_{[0,N]^2 \times [0,N]^2} E_N(x-y) T_2(x-y) \, dx\, dy +  \log \frac{N}{2\pi \eta(i)^2} \\=  \log \frac{N}{2\pi \eta(i)^2}  \( 1-\int_{[-N,N]^2 }\(1 +O\left(\frac{|v|}{N}\right) +O\left(\frac{|v|^2}{N^2}\right)\) T_2(v)\,dv \)\\
+ \int_{[-N,N]^2} \(  \log |v| + O\left(\frac{|v|}{N}\right)  \)\(1 +O\left(\frac{|v|}{N}\right) +O\left(\frac{|v|^2}{N^2}\right) \) T_2(v) \,     dv.\end{multline*}
Using the assumption we have  \begin{eqnarray*}
& \int_{[-N,N]^2} \frac{|v|}{N}T_2(v)\, dv=o(1), \\
& \int_{[-N,N]^2 } \frac{|v|^2}{N^2} T_2(v)\, dv=o(1),\\ 
&\int_{[-N,N]^2 } \frac{|v|^3}{N^3}T_2(v)\, dv=o(1),\\  
&\int_{[-N,N]^2 } \frac{|v|}{N} \log |v|T_2(v)\, dv=o(1),  \\
&\int_{[-N,N]^2 } \frac{|v|^2}{N^2} \log |v|T_2(v)\, dv=o(1).\end{eqnarray*}
 It follows that, as $N \to \infty$,
\begin{multline*} - \frac{1}{2\pi N^2} \int_{[0,N]^2\times [0,N]^2 } E_N(x-y) T_2(x-y) \, dx\, dy + \log \frac{N}{2\pi \eta(i)^2} \\=  \log \frac{N}{2\pi \eta(i)^2}  \( 1-\int_{[-N,N]^2 } T_2(v)\,dv \)
+ \int_{[-N,N]^2}  \log |v|  T_2(v) \,     dv +o(1).\end{multline*}
The result then easily follows.\end{proof}

\subsection{Specific computations  in the plane}
We turn to computing that limit for two specific processes. The first one is 
the determinantal random point process with correlation kernel
\begin{equation}\label{kginibre}
K(x,y)= e^{-\frac{\pi }{2} (|x|^2 + |y|^2 - 2x\overline{y})}.\end{equation}
This process arises in random matrices as the local  limit of the complex  Ginibre ensemble, see e.g. \cite[Proposition 15.2.3]{forrester}, and thus it is sometimes called the Ginibre point process.
From the determinantal structure of the correlation functions, cf. \eqref{rodet}, 
we have $\ro_2(x,y)=1-|K(x,y)|^2=
1- e^{-\pi |x-y|^2}$ and $T_2(v)=e^{-\pi |v|^2} $.
This easily satisfies all the assumptions of Theorem \ref{th2} (in particular $\int T_2=1$) and we obtain
\begin{pro}
The determinantal process with kernel \eqref{kginibre} satisfies
$$\lim_{N\to \infty} \E \Wc= -\frac{1}{2}\left(\gamma+\log \pi\right).$$

\end{pro}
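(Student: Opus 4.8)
The plan is to apply Theorem~\ref{th2}, so the work reduces to verifying its hypotheses for $T_2(v)=e^{-\pi|v|^2}$ and then evaluating the resulting integral. From the determinantal structure \eqref{rodet} and the kernel \eqref{kginibre} one has $\ro_2(x,y)=1-|K(x,y)|^2=1-e^{-\pi|x-y|^2}$, hence $T_2(v)=e^{-\pi|v|^2}$ as already observed. The moment bounds $\int_{\mr^2}|v|^k|T_2(v)|\,dv<\infty$ for $k=1,2,3$ are immediate from the Gaussian decay; the normalization $\int_{\mr^2}T_2(v)\,dv=1$ is the standard Gaussian integral $\int_{\mr^2}e^{-\pi|v|^2}\,dv=1$; and since $1-\int_{[-N,N]^2}T_2(v)\,dv\le\int_{|v|\ge N}e^{-\pi|v|^2}\,dv$ decays faster than any power of $N$, the condition $1-\int_{[-N,N]^2}T_2(v)\,dv=o((\log N)^{-1})$ holds a fortiori. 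Finally $\int_{\mr^2}\log|v|\,e^{-\pi|v|^2}\,dv$ converges absolutely, the only singularity (at the origin) being logarithmic and hence integrable in the plane. Theorem~\ref{th2} then gives $\lim_{N\to\infty}\E\Wc=\int_{\mr^2}\log|v|\,e^{-\pi|v|^2}\,dv$.

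The remaining step is to compute that integral. Passing to polar coordinates and substituting $s=\pi r^2$, it becomes
\begin{equation*}
\int_{\mr^2}\log|v|\,e^{-\pi|v|^2}\,dv=2\pi\int_0^\infty r(\log r)\,e^{-\pi r^2}\,dr=\frac12\int_0^\infty(\log s-\log\pi)\,e^{-s}\,ds=\frac12\bigl(\Gamma'(1)-\log\pi\bigr),
\end{equation*}
and substituting $\Gamma'(1)=-\gamma$ yields $\lim_{N\to\infty}\E\Wc=-\frac12(\gamma+\log\pi)$, as claimed.

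Since every ingredient is a routine verification or a classical integral, there is no genuine obstacle here; the only points deserving a sentence of care are that the Gaussian $T_2$ has total mass \emph{exactly} $1$ (so that the divergent alternative in Theorem~\ref{th2} is excluded) and the standard identification $\int_0^\infty(\log s)\,e^{-s}\,ds=\Gamma'(1)=-\gamma$.
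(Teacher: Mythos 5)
Your proposal is correct and follows essentially the same route as the paper: verify the hypotheses of Theorem~\ref{th2} for $T_2(v)=e^{-\pi|v|^2}$ (which the paper dispatches in one line, while you spell out the moment bounds and the tail estimate) and then evaluate $\int_{\mr^2}\log|v|\,e^{-\pi|v|^2}\,dv$ by the same polar substitution $s=\pi r^2$ together with $\int_0^\infty e^{-s}\log s\,ds=-\gamma$. There is nothing to add.
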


This statement can be compared to a computation done in \cite{jancovici}, see also 
\cite[Ex.15.3.1(iv)]{forrester}. 

\begin{proof} According to Theorem \ref{th2} it suffices to compute
$$\int_{\mr^2}\log |v| e^{-\pi |v|^2}\, dv= \int_0^\infty \log r \, e^{-\pi r^2 } 2\pi r \, dr=\int_0^\infty \hal (\log s - \log \pi)e^{-s}ds ,$$ using the change of variables $s=\pi r^2$.
We have $\int_0^\infty e^{-s} \log s\, ds=-\gamma$, and the result follows.\end{proof}

The second one is the process of the zeros of a Gaussian analytic function (often denoted GAF).  
It consists of the random zeros of the analytic function  
$\sum_{n=0}^\infty \frac{\xi_n}{\sqrt{n!}} z^n$ when the $\xi_n$ are i.i.d Gaussians suitably normalized, and 
it is a stationary process in the plane.    The general background can be found e.g. in  \cite{Hou06}.

The second cluster function for the  process, when the density $\ro_1$ is taken to be $1$,  is given according to \cite{forresterh} by
$$T_2(x)=1-h\(\frac{\pi|x|^2}{2}\)$$
where $$h(x)= 1+ \hal \frac{d^2}{dx^2} \( x^2 (\coth x -1 )\)  .$$
It is easy to check that the assumptions of Theorem \ref{th2} are satisfied, and we deduce 
\begin{pro}
The  ``zeros of Gaussian analytic functions" process satisfies 
$$\lim_{N\to \infty} \E \Wc= - \frac{1}{2}(1+\log \pi)
.$$
\end{pro}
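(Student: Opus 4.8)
The plan is to apply Theorem~\ref{th2}: it remains to check its hypotheses and to evaluate $\int_{\mr^2}\log|v|\,T_2(v)\,dv$. Put $g(y):=y^2(\coth y-1)=\frac{2y^2}{e^{2y}-1}$, so that $h(y)=1+\frac12 g''(y)$ and hence $T_2(x)=1-h(\pi|x|^2/2)=-\frac12\,g''(\pi|x|^2/2)$. Since $y\mapsto \frac{y}{e^{2y}-1}$ is smooth on $[0,\infty)$ and, together with all its derivatives, is $O(e^{-2y})$ at $+\infty$, the function $g$ is smooth on $[0,\infty)$ with $g(y)=y-y^2+\frac{y^3}{3}+O(y^4)$ near $0$ (in particular $g(0)=0$, $g'(0)=1$) and $g^{(j)}(y)=O(e^{-2y})$ at infinity for every $j$. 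Thus $T_2$ is bounded, smooth and $O(e^{-c|x|^2})$ as $|x|\to\infty$, which immediately yields $\int_{\mr^2}|v|^k|T_2(v)|\,dv<\infty$ for $k=1,2,3$, the convergence of $\int_{\mr^2}\log|v|\,T_2(v)\,dv$ (the logarithmic singularity at $0$ being integrable in the plane), and $1-\int_{[-N,N]^2}T_2=O(e^{-cN^2})=o((\log N)^{-1})$. The remaining hypothesis $\int_{\mr^2}T_2=1$ drops out of the computation below.

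Introduce the Mellin transform
$$\Phi(s):=\int_0^\infty t^{s-1}\bigl(1-h(t)\bigr)\,dt=-\frac12\int_0^\infty t^{s-1}g''(t)\,dt,$$
holomorphic for $\mathrm{Re}(s)>0$ (near $0$, $1-h(t)=1+O(t)$; exponential decay at $+\infty$). Writing the $v$-integrals in polar coordinates and substituting $t=\pi|v|^2/2$ gives $\int_{\mr^2}T_2(v)\,dv=2\Phi(1)$ and
$$\int_{\mr^2}\log|v|\,T_2(v)\,dv=\Phi'(1)+\frac12\log\frac{2}{\pi}.$$
To evaluate $\Phi$, integrate by parts twice; for $\mathrm{Re}(s)>1$ the boundary terms vanish (using $g(t)\sim t$, $g'(t)\sim1$ near $0$), so that $\int_0^\infty t^{s-1}g''(t)\,dt=(s-1)(s-2)\int_0^\infty t^{s-3}g(t)\,dt$. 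Expanding $g(t)=2\sum_{k\ge1}t^2e^{-2kt}$ and integrating term by term (legitimate for real $s>1$ by monotone convergence) gives $\int_0^\infty t^{s-3}g(t)\,dt=2^{1-s}\Gamma(s)\zeta(s)$, hence
$$\Phi(s)=-\frac12\,(s-1)(s-2)\,2^{1-s}\,\Gamma(s)\,\zeta(s),\qquad \mathrm{Re}(s)>1.$$
Both sides are holomorphic for $\mathrm{Re}(s)>0$ — the simple pole of $\zeta$ at $s=1$ being cancelled by the factor $(s-1)$ — so the identity persists there. In particular $\Phi(1)=\frac12$, which confirms $\int_{\mr^2}T_2=1$.

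It remains to compute $\Phi'(1)$ from the closed form. Set $A(s):=(s-2)2^{1-s}\Gamma(s)$ and $B(s):=(s-1)\zeta(s)$, so $\Phi=-\frac12 AB$. Using $\zeta(s)=\frac{1}{s-1}+\gamma+O(s-1)$ one gets $B(1)=1$, $B'(1)=\gamma$; and $A(1)=-1$, $A'(1)=1+\log2-\Gamma'(1)=1+\log2+\gamma$ since $\Gamma'(1)=-\gamma$. Therefore
$$\Phi'(1)=-\frac12\bigl(A'(1)B(1)+A(1)B'(1)\bigr)=-\frac12\bigl(1+\log2+\gamma-\gamma\bigr)=-\frac12(1+\log2),$$
the Euler constant cancelling out. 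Substituting back, $\int_{\mr^2}\log|v|\,T_2(v)\,dv=-\frac12(1+\log2)+\frac12\log\frac{2}{\pi}=-\frac12(1+\log\pi)$, and Theorem~\ref{th2} gives $\lim_{N\to\infty}\E\Wc=-\frac12(1+\log\pi)$. The only points requiring care are the vanishing of the boundary terms in the two integrations by parts and the analytic continuation of the Mellin identity across $s=1$; everything else is routine bookkeeping.
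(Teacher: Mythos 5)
Your proof is correct, and it reaches the stated value by a genuinely different route from the paper. The paper verifies $\int_{\mr^2}T_2=1$ by a direct antiderivative evaluation and then computes $\int_{\mr^2}\log|v|\,T_2(v)\,dv$ by integrating $\log\frac{2u}{\pi}\,\frac{d^2}{du^2}\bigl(u^2(\coth u-1)\bigr)$ by parts on $[\ep,\infty)$ and tracking the cancellation of the $\log\ep$ divergences using $\coth x\sim 1/x$ and $\log\sinh x - x\to-\log 2$. You instead package both quantities into the Mellin transform $\Phi(s)$, identify it in closed form as $-\tfrac12(s-1)(s-2)2^{1-s}\Gamma(s)\zeta(s)$ via the expansion $g(t)=2t^2\sum_{k\ge1}e^{-2kt}$, continue analytically across the (cancelled) pole at $s=1$, and read off $\Phi(1)=\tfrac12$ and $\Phi'(1)=-\tfrac12(1+\log 2)$ by a product-rule expansion; I checked the local data $B(1)=1$, $B'(1)=\gamma$, $A(1)=-1$, $A'(1)=1+\log 2+\gamma$, the cancellation of $\gamma$, and the final bookkeeping $-\tfrac12(1+\log2)+\tfrac12\log\tfrac2\pi=-\tfrac12(1+\log\pi)$, all of which are right. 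The paper's computation is more elementary (pure calculus, no special functions), while yours replaces the delicate $\ep$-regularization by standard holomorphy arguments, makes the verification $\int T_2=1$ fall out of the same formula, and would generalize immediately to other moments $\int|v|^{2\sigma}T_2(v)\,dv$. The only presentational quibble is that you quote $\int_{\mr^2}\log|v|\,T_2(v)\,dv=\Phi'(1)+\tfrac12\log\tfrac2\pi$ (whose second coefficient is $\Phi(1)$) before establishing $\Phi(1)=\tfrac12$, but since you do establish it, this is harmless.
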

\begin{proof}
To check that we may apply Theorem \ref{th2} we first compute 
\begin{eqnarray*}
\int_{\mr^2} T_2(v)\, dv & = & \int_{\mr^2} \(1- h\(\frac{\pi |v|^2}{2}\)\) \, dv\\
& = & \int_0^\infty \( 1- h\(\frac{\pi r^2}{2}\) \) 2\pi r \, dr \\
& = & 2 \int_0^\infty    \( 1-h(u)\) \, du\\
& = & -  \left[ \frac{d}{dx} ( x^2 (\coth x -1))\right]_0^\infty \\
& = &  - \left[ 2x(\coth x-1) + x^2 (1-\coth^2 x)\right]_0^\infty \\
 & = & 1
 \end{eqnarray*}
 where we have used the  change of variables $u = \pi r^2/2$ and the asymptotic relation 
$ \coth x\sim \frac{1}{x}$ as $x \to 0$.
 It is also easy to check that the convergence of $\int T_2$ is exponential, hence fast enough, and we may apply Theorem \ref{th2}.
 
This yields
\begin{eqnarray*}
\lim_{N\to \infty} \E \Wc& =&  \int_{\mr^2} \log |v|T_2(v)\, dv\\
& = &  2\pi \int_0^\infty \log r\(1-h\(\frac{\pi r^2}{2}\)\) r\, dr \\
& = & 2 \int_0^\infty \log \sqrt{\frac{2u}{\pi}}\,  (1- h(u))\, du\\
& = & - \frac{1 }{2} \int_0^\infty\( \log\frac{ 2u}{\pi}  \)\frac{d^2}{du^2} (u^2 (\coth u -1)) \, du.\end{eqnarray*}

Let us now compute 
\begin{multline*}
\int_\ep^\infty \(\log \frac{2u}{\pi}\) \frac{d^2}{du^2} (u^2 (\coth u -1)) \, du\\
=  \left[  \(\log \frac{2x}{\pi}\) \frac{d}{dx} ( x^2 (\coth x -1) ) \right]_{\ep}^\infty - \int_{\ep}^\infty \frac{1}{u}  \frac{d}{du} ( u^2 (\coth u -1))\, du \\
=  - \log \frac{2\ep}{\pi}  \( 2\ep (\coth \ep -1) + \ep^2 (1- \coth^2 \ep)\)    - \left[ x ( \coth x -1) \right]_{\ep}^\infty  - \int_{\ep}^\infty (\coth u - 1) \, du\\
 = - \log \frac{2\ep}{\pi}  (1+O(\ep)) + 1+O(\ep) - \left[\log \sinh x - x \right]_{\ep}^\infty.\end{multline*}
 Taking the limit $\ep \to 0$, we conclude 
 $$\lim_{N \to \infty} \E \Wc = - \frac{1}{2}(1+\log \pi) .$$
 \end{proof} 
 It is well known (cf. \cite{ns} and references therein) that 
the  ``GAF  process" is more ``rigid"  than the ``Ginibre point process"  (cf. also recent work \cite{gnps}). We just demonstrated here  it has more order  via the renormalized energy.

\section{Optimization over determinantal processes}\label{optim}
As explained in the introduction and Section \ref{sec:background}, the question of minimizing $W$ is an important one, and open in dimension 2. It thus seems interesting to try to minimize $\lim_{N\to \infty} \E \Wc$ as expressed in \eqref{resth1} and \eqref{resth2}, over a subclass of processes.

In this section we show that we can characterize the minimizer of this expression over  the class of 
determinantal random point processes   whose correlation kernel $K(x,y)$ is Hermitian and 
translation invariant, i.e. 
$K(x,y)= k(x-y)$ for some function $k$.  For those processes, we have $T_2=k^2$. Note, however, that the important 
determinantal process with kernel \eqref{kginibre} is not in this class: while all its
correlation functions are translation invariant, the correlation kernel is not. 

We prove the following statement. The proof relies on a rearrangement inequality.
\begin{theo}\label{th3}
Let $\cal{K}$ be the class of determinantal processes  on the real line, respectively the plane, 
with self-adjoint  translation-invariant kernels $K(x,y)=k(x-y)$, and $k(v)\in L^2(\mathbb{R}^d)$, $d=1$ or $d=2$,  such that
\begin{enumerate}
\item[1)] $\rho_1(x)=k(0)=1$ and $\int_{\mathbb{R}^d} k^2(x)\, dx=1$;
\item[2)] $ \int_{\mathbb{R}^d} \log |x|k^2(x)\, dx<\infty $.
\end{enumerate}
Let ${\cal F}(k)=
\int_{\mr^d} \log |x|k^2(x)\, dx $. Then for
any process from $\cal K$ with correlation kernel $K(x,y)=k(x-y)$, we have
$${\mathcal F} (k)\ge {\cal F} ( \widehat{\indic_{B}}),$$
 where $B$ is the ball centered at $0$ of volume one, and \ $\widehat{\cdot} $ is the  Fourier transform.
Thus, on the real line ${\cal F}(k)$ is minimized over $\cal K$ by the $\beta=2$ sine process, while
on the plane the minimizing determinantal process has the kernel given by 
$k(v)=\widehat{\indic_B}(v)= \frac{J_1(2\sqrt{\pi} |v| ) }{\sqrt{\pi}|v|}$, where $J_1$ is a Bessel function of the first kind.
\end{theo}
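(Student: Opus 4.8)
The plan is to reduce the minimization to a rearrangement inequality for indicator functions, exploiting the constraint imposed by the determinantal structure. Recall that a self-adjoint kernel $K(x,y)=k(x-y)$ defines a determinantal point process if and only if the associated (locally trace-class) operator on $L^2(\mathbb R^d)$ satisfies $0\preceq K\preceq \mathrm{Id}$; since $K$ is the convolution operator with Fourier symbol $\widehat k$, this amounts to $\widehat k$ being real-valued with $0\le \widehat k(\xi)\le 1$ for a.e.\ $\xi$. Condition~1) then gives $\int_{\mathbb R^d}\widehat k=k(0)=1$ and, by Plancherel, $\int_{\mathbb R^d}\widehat k^{\,2}=\int_{\mathbb R^d}k^2=1$. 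Since $0\le\widehat k\le 1$ forces $\widehat k^{\,2}\le\widehat k$ pointwise, equality of these two integrals is only possible if $\widehat k(\xi)\in\{0,1\}$ for a.e.\ $\xi$, i.e.\ $\widehat k=\indic_A$ for some measurable $A$ with $|A|=\int\widehat k=1$; conversely every such $A$ gives an admissible process (the ``Fermi-sphere'' processes of \cite{t}). Hence it suffices to minimize $\mathcal F(\widehat{\indic_A})=\int_{\mathbb R^d}\log|x|\,|\widehat{\indic_A}(x)|^2\,dx$ over measurable sets $A\subset\mathbb R^d$ with $|A|=1$.

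Next I would rewrite this functional in ``heat-kernel'' form. Using the Frullani identity $\log|x|=\frac12\int_0^\infty t^{-1}\big(e^{-t}-e^{-t|x|^2}\big)\,dt$ together with $\int_{\mathbb R^d}|\widehat{\indic_A}|^2=|A|=1$ and the elementary bound $|\widehat{\indic_A}|\le |A|=1$ (which controls the $\log$-singularity at the origin), hypothesis~2) makes the resulting double integral absolutely convergent, so Fubini gives
$$\mathcal F(\widehat{\indic_A})=\frac12\int_0^\infty\frac{e^{-t}-I_A(t)}{t}\,dt,\qquad I_A(t):=\int_{\mathbb R^d}e^{-t|x|^2}\,|\widehat{\indic_A}(x)|^2\,dx.$$
Now $|\widehat{\indic_A}|^2$ has Fourier transform equal to the autocorrelation $\xi\mapsto|A\cap(A+\xi)|$, and $e^{-t|x|^2}$ has a Gaussian Fourier transform, so Parseval yields
$$I_A(t)=\Big(\frac\pi t\Big)^{d/2}\int_{\mathbb R^d}e^{-\pi^2|\xi|^2/t}\,|A\cap(A+\xi)|\,d\xi=\Big(\frac\pi t\Big)^{d/2}\int_A\int_A e^{-\pi^2|x-y|^2/t}\,dx\,dy.$$

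The crux is then the Riesz rearrangement inequality: for each fixed $t>0$ the function $z\mapsto e^{-\pi^2|z|^2/t}$ is radially symmetric and strictly decreasing, so with $B$ the ball centered at $0$ of volume $1$ (the symmetric decreasing rearrangement of $\indic_A$) one has $\int_A\int_A e^{-\pi^2|x-y|^2/t}\,dx\,dy\le \int_B\int_B e^{-\pi^2|x-y|^2/t}\,dx\,dy$, i.e.\ $I_A(t)\le I_B(t)$ for all $t>0$. Consequently $e^{-t}-I_A(t)\ge e^{-t}-I_B(t)$, and dividing by $t>0$ and integrating gives $\mathcal F(\widehat{\indic_A})\ge\mathcal F(\widehat{\indic_B})$, which is the asserted bound. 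It then remains only to identify $\widehat{\indic_B}$ explicitly: for $d=1$, $B=[-\tfrac12,\tfrac12]$ and $\widehat{\indic_B}(v)=\frac{\sin\pi v}{\pi v}$, which is exactly the kernel $K^{(2)}(v,0)$, so the minimizer is the $\beta=2$ sine process; for $d=2$, $B$ is the disk of radius $1/\sqrt\pi$, and the standard formula for the Fourier transform of the indicator of a disk gives $\widehat{\indic_B}(v)=\frac{J_1(2\sqrt\pi\,|v|)}{\sqrt\pi\,|v|}$.

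The only genuinely delicate step is the first one: establishing that a self-adjoint determinantal kernel must have symbol in $[0,1]$, and observing that this spectral constraint, combined with the two normalizations, collapses the whole admissible class $\mathcal K$ onto the family of indicator symbols $\widehat k=\indic_A$. Everything afterwards is a clean assembly of a classical integral representation, Plancherel, and the Riesz inequality; the remaining work is routine verification of integrability, supplied by hypothesis~2), together with the remark that $\mathcal F(\widehat{\indic_B})$ is itself finite (because $|\widehat{\indic_B}(v)|^2$ decays like $|v|^{-(d+1)}$ at infinity), so that the inequality $\mathcal F(k)\ge\mathcal F(\widehat{\indic_B})$ is meaningful.
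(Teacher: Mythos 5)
Your argument is correct, and its overall architecture coincides with the paper's: first the Macchi--Soshnikov reduction (spectrum of the convolution operator in $[0,1]$, hence $\widehat k\in[0,1]$, and the two normalizations $\int\widehat k=\int\widehat k^{\,2}=1$ force $\widehat k=\indic_A$ with $|A|=1$), then a Riesz rearrangement inequality applied to a family of symmetric decreasing kernels, integrated over the family parameter. Where you genuinely depart from the paper is in the regularization of the logarithm. The paper writes $\log|x|=\lim_{\a\to0}\a^{-1}(1-|x|^{-\a})$, invokes the distributional Fourier transforms of the Riesz potentials $|x|^{-\a}$ (a delta plus a power of $|\xi|$), and must then justify the Parseval identity against the autocorrelation $f_A(\xi)=|A\cap(A+\xi)|$ by a mollification argument, prove that $f_A$ is continuous, and finally pass to the limit $\a\to0$ by monotone convergence on the two regions $|x|<1$ and $|x|\ge1$. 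You instead use the Frullani/Gaussian subordination $\log|x|=\frac12\int_0^\infty t^{-1}(e^{-t}-e^{-t|x|^2})\,dt$, so that Parseval is applied only against Schwartz Gaussians and the rearrangement step is the textbook Riesz inequality for the kernel $e^{-\pi^2|z|^2/t}$ at each fixed $t$; no distribution theory, no continuity of $f_A$, and a single Fubini replaces the $\a\to0$ limit. The price is that all the delicacy is concentrated in that one Fubini: near $t=0$ the $t$-integral of $t^{-1}|e^{-t}-e^{-t|x|^2}|$ grows like $2\log|x|$ for large $|x|$ (and like $2|\log|x||$ near the origin, where $|\widehat{\indic_A}|\le1$ and local integrability of $\log$ save you), so absolute convergence of the double integral is exactly equivalent to hypothesis~2); you flag this correctly, and since $\log|x|\ge0$ for $|x|\ge1$ and $k^2\ge0$, the convergence in~2) is automatically absolute where it matters. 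Your closing remarks — that $e^{-t}-I_A(t)\ge e^{-t}-I_B(t)$ pointwise in $t$ and that $\mathcal F(\widehat{\indic_B})$ is finite because $|\widehat{\indic_B}(v)|^2=O(|v|^{-(d+1)})$ — complete the comparison and the identification of the minimizers in $d=1,2$ exactly as in the paper. In short: same reduction, same key inequality, but a cleaner and more self-contained implementation of the intermediate Fourier-analytic step.
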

\begin{remark} Condition 1) says that our processes have density 1, which we have assumed throughout,
and that $\lim_{N\to\infty} \E\Wc$ is finite, cf. Theorems \ref{th1} and \ref{th2}. The functional $\cal F$ coincides with
$\lim_{N\to\infty} \E \Wc$, given that the decay assumptions are satisfied. 
\end{remark}
\begin{remark} Numerical integration shows that in dimension 2, ${\cal F} ( \widehat{\indic_{B}})\approx
      -0.65$, which is greater than $\lim_{N\to\infty} \E\Wc$ for the Ginibre ensemble 
$(=-\tfrac 12(\gamma+\log \pi)\approx -0.86)$ and for the zeroes of the Gaussian analytic function
$(=-\tfrac 12(1+\log\pi)\approx -1.07)$. Thus, the two latter processes are ``more rigid''.
     \end{remark}

\begin{proof}[Proof of Theorem \ref{th3}] 
Let us denote by $f$ the inverse Fourier transform of $k$, and by $T_K$ the integral operator 
corresponding to $K$ via $T_K(\varphi)= \int K(x,y)\varphi(y)\, dy.$ We have
$$T_K(\varphi)=  \int k(x-y)\varphi(y)\, dy = k * \varphi.$$
By Macchi-Soshnikov's theorem, see \cite{Sos00}, any self-adjoint translation-invariant
correlation kernel of a determinantal process gives rise to an integral operator
with spectrum between $0$ and $1$. Hence, the spectrum of $T_K$ is in $[0,1]$. Since $T_K$ is also the convolution by $k$, this implies that $f=\check{k}$ takes values in $[0,1]$. Moreover $f \in L^2 $ with $\int |f|^2=\int k^2=1$, and $\int f=k(0)=1$  from assumption  1). 
A function $f $ with values in $[0,1]$ which satisfies $\int f=\int f^2=1$ can only be a characteristic function of a set $A$, denoted $f=\indic_A$, with $A$ measurable of measure 1. Writing $k_A$ for the corresponding $k$, we may write
$$k_A(x)= \int e^{- 2i \pi \xi \cdot x} \indic_{A}(\xi)\, d\xi.$$
There remains to optimize over $A$, measurable set  of measure 1,
\begin{equation}\label{ia}
 I(A):= \int_{\mr^d} k_A^2(x) \log |x|\, dx.
\end{equation}
Let  $A$ be measurable of measure 1, such that $I(A) < +\infty$.
Noting that $k_A $ is an $L^\infty$ (and also continuous) function, for every $\a>0$ the integrals
$$\int_{\mr^d} k_A^2(x) \frac{1-|x|^{-\alpha}}{\a} \, dx$$
then also converge, by comparison.
Given any $\tau\in \mr$, $(e^{\tau h}-1)/h$ converges to $\tau $ monotonically as $h \to 0$  (it suffices to check that
 this function  is increasing in $h$). It then follows that in each of  the domains $|x|<1$
and $|x|\ge 1$, we have
$\frac{1-|x|^{-\alpha}}{\a} \to \log |x|$ as $\a \to 0$, monotonically. Splitting the integrals as sums over these two regions,
it follows by monotone convergence theorem that
\begin{equation}
\label{cvmonotone}
I(A)= \lim_{\a\to 0}  \int_{\mr^d}  k_A^2(x) \frac{1-|x|^{-\alpha}}{\a} \, dx.\end{equation}
We then remark  (in view of the formula $\widehat{f*g}=\hat{f}\hat{g}$)  that
$k_A^2(x)$ is the Fourier transform of
$$f_A(x)=\int_{\mr^d}\indic_A(y)\indic_A (y-x)\, dy= |(A+x)\cap A|.$$
We next claim that $f_A$ is a continuous function.
 First, consider the case where $A$ is an open set.    Then, as $x\to x_0$ we have $\indic_A(y) \indic_A(y-x)
\to \indic_A (y) \indic_A(y-x_0)$ almost everywhere, by openness of $A$, while
$|\indic_A (y)\indic_A(y-x)|\le \indic_A(y) $ and $\indic_A \in L^1$. The claim is thus true by dominated
convergence theorem.
Second, if $A$ is a general measurable set, by outer regularity of the measure we may approximate
it by an open set $U$ such that $A\subset U$ and $|U\backslash A|<\ep$.
Then it is immediate that for any $x$,  $|f_A(x)-f_U(x)|= ||(A+x)\cap A|- |(U+x)\cap U||\le 2\ep$.
 Since $f_U$ is continuous and $\ep $ is arbitrary, it  follows that $f_A$
 has to be continuous too as a uniform limit of continuous functions. The claim is proved.
We also note that $f_A(0)=|A|=1$.

The next ingredient is that  in dimension 1
\begin{equation}\label{f1d}
\widehat{(1-|x|^{-\alpha})}= \delta- 2  \Gamma(-\alpha+1) \sin \frac{\pi \a}{2}  (2\pi |\xi|)^{\a-1}, \end{equation}
while in dimension 2
\begin{equation}\label{f2d}
 \widehat{(1-|x|^{-\alpha})}= \delta - \pi^{\a-1}\frac{\Gamma(\frac{2-\a}{2})}{\Gamma(\frac{\a}{2})} |\xi|^{\a-2}
\end{equation}
For a reference see e.g. \cite[Chapter 5]{edwards}, or \cite[page 113]{sch}.

Let us continue with the one-dimensional case.
We deduce from the above facts that
\begin{equation}\label{fourier3}
I_\alpha(A):=  \int_{\mr} k_A^2(x) \frac{1-|x|^{-\alpha}}{\a} \, dx= \frac{1}{\a}\int_{\mr}  f_A(\xi)\(
 \delta- 2 \Gamma(-\alpha+1)  \sin \frac{\pi \a}{2}  (2\pi |\xi|)^{\a-1} \)\, d\xi.\end{equation}
The relation can be justified
by convoluting $ \delta- 2 \Gamma(-\alpha+1) \sin \frac{\pi \a}{2}  (2\pi |\xi|)^{\a-1}$ with a Gaussian
kernel approximating $\delta$ at scale $\ep$,
using the fact that $\int \hat{f} g=\int f \hat{g}$ in the Schwartz class, the continuity of $f_A$
and then letting $\ep \to 0$ on both sides. Moreover, this argument shows that
$\int f_A(\xi)  |\xi|^{\a-1}\, d\xi $ is convergent.

Using $f_A(0)=1$ and Fubini's theorem, we may rewrite \eqref{fourier3} as
\begin{multline}\label{fourier4}
I_\alpha(A) = \frac{1}{\a}\( 1- 2 \Gamma(-\alpha+1) \sin \frac{\pi \a}{2}
\int_{\mr^2}  \indic_A(y)\indic_A(y-x) (2\pi |x|)^{\a-1}  \, dx\, dy\) \\
= \frac{1}{\a}- \frac{2}{\a} \Gamma(-\a +1) \sin \frac{\pi \a}{2}
\int_{\mr^2} \indic_A(y)\indic_A(z) (2\pi |y-z|)^{\a-1}\, dz\, dy.
\end{multline} Notice that $\frac{1}{\a}  \Gamma(-\alpha+1) \sin  \frac{\pi \a}{2} \sim \frac{\pi}{2}$ as $\a\to 0$
so for $\a$ small enough, $- \frac{2}{\a} \Gamma(-\alpha+1) \sin \frac{\pi \a}{2}(2\pi |y-z|)^{\a-1}$ is increasing in
$|y-z|$.
Now Riesz's rearrangement inequality (see \cite[Theorem 3.7]{ll})
 asserts that a quantity of the form of the right-hand side of \eqref{fourier4}
 is always decreased by changing $\indic_A$ into its symmetric rearrangement
 $(\indic_A)^*=\indic_{A^*}$.
This means that for all $\alpha$ small enough,
$I_\alpha(A) \ge I_\alpha(A^*)$. But  $I(A)= \lim_{\a\to 0} I_\alpha(A)$ hence the same is true also for $I$
i.e. $I(A) \ge I(A^*).$  The symmetric rearrangement $A^*$ of $A$ is the ball centered at $0$ and of volume $|A|=1$.
We  have thus
 found that $\int k_A^2(x) \log |x|\, dx$ is minimal when $A$
is the ball centered at $0$ and of volume 1.
In dimension one, the Fourier transform of $\indic_{[-\hal, \hal]}$ is $\frac{\sin \pi x}{\pi x}$, which corresponds to the determinantal process with $K(x,y)=\frac{ \sin \pi (x-y)}{\pi(x-y)}$, that is the sine process  (for $\beta=2$).

In dimension 2, the argument is exactly parallel, starting again from \eqref{f2d}. \end{proof}
\section{Computations of variance of $\Wc$}\label{sec4}
In Section \ref{sec3} we dealt with the expectation of $W$. 
In this section we turn to examining its variance in the sense of computing 
$\lim_{N\to \infty} \V(\Wc) $ for the same specific random
point processes.

In what follows we will need the formalism of (higher) cluster functions to efficiently
deal with the $k$-point correlation functions for $k=2,3,4$; we refer to \cite{tw,faris}
for details and further references on this formalism. 

For any nonempty subset $S=\{i_1, \dots, i_k\}$ of  $\{1, \dots, N\}$ we write
$\ro_S=\ro_k(x_{i_1}, \dots, x_{i_k})$, where $\ro_k$ is the $k$-point correlation function,
 and define the $n$-point cluster function as
\begin{equation}\label{deftn}
T_n(x_1, \dots, x_n)=\sum (-1)^{n-m} (m-1)! \ro_{S_1} \dots \ro_{S_m}\end{equation}
with the sum running over all partitions of  $\{1, \dots, n\}$ into nonempty subsets $S_1, \dots , S_m$.
From the $T_n$, the $\ro_n$ can be recovered through the reciprocal formula
\begin{equation}\label{reciprocal}
 \ro_n=\sum (-1)^{n-m}T_{S_1}\dots T_{S_m}.
\end{equation}
If a random point process is determinantal (cf. \eqref{rodet}) with correlation kernel $K$, 
then (see e.g. \cite{tw}) for any $k\ge 1$
\begin{equation}\label{tdet}
T_k(x_1, \dots, x_k)=\frac{1}{k}\,\sum_{\sigma\in \mathbf{S}_k}K(x_{\sigma(1)}, x_{\sigma(2)})\dots K( x_{\sigma(k)}, x_{\sigma(1)}),\end{equation}
where $\mathbf{S}_k$ denotes the symmetric group on $k$ symbols.

If the correlation functions of a point process are given by quaternion determinants
(cf. \eqref{qdet}) then (see e.g. \cite{tw}) for any $k\ge 1$
\begin{equation}\label{tqdet}
T_k(x_1, \dots, x_k)=\frac{1}{2k}\,Tr\sum_{\sigma\in \mathbf{S}_k}K(x_{\sigma(1)}, x_{\sigma(2)}) \dots K( x_{\sigma(k)}, x_{\sigma(1)}).\end{equation}

\begin{lem}\label{prform}
We have
$$\V\(\sum_{i\neq j}G_N(a_i-a_j)\)= I_1 + \dots + I_6$$ where
 \begin{align}%\label{var1}
 \label{t1} &
I_1=  2 \int_{[0,N]^2} G_N(x-y)^2 \, dx\, dy\\
  \label{t2}  & I_2= - 4 \int_{[0,N]^3} G_N(x-y)G_N(x-z) T_2(y,z)\, dx\, dy\, dz\\
  \label{t3} &
I_3   = 2 \int_{[0,N]^4}
 G_N(x-y)G_N(z-t) T_2(x,z)T_2(y,t) \, dx\, dy\, dz\, dt \\
 \label{t4} & I_4= - 2\int_{[0,N]^2} G_N(x-y)^2 T_2(x,y)\, dx\, dy\\
 \label{t5} &
I_5=  4 \int_{[0,N]^3 } G_N(x-y)G_N(x-z)T_3(x,y,z)\, dx\, dy\, dz\\
\label{t6} &
I_6= - \int_{[0,N]^4 } G_N(x-y)G_N(z-t)T_4(x,y,z,t)\, dx\, dy\, dz\, dt,\end{align}
where $G_N(x)= -\log \left|2\sin \frac{\pi x}{N}\right|$ in dimension 1, resp. 
$G_N(x)=\frac{1}{2\pi} E_N(x) $ defined in \eqref{defE} in dimension 2.

\end{lem}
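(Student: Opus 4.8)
The plan is to write $\V(S)=\E[S^2]-(\E[S])^2$ for $S=\sum_{i\neq j}G_N(a_i-a_j)$ (the sum over distinct configuration points lying in $[0,N]$, resp. $[0,N]^2$), reorganize $\E[S^2]$ through the correlation functions, and then insert the cluster expansion. First I would expand $S^2=\sum_{i\neq j}\sum_{k\neq l}G_N(a_i-a_j)G_N(a_k-a_l)$ and split the sum according to the coincidence pattern of the four indices, which are only constrained by $i\neq j$ and $k\neq l$: either $i,j,k,l$ are all distinct; or exactly one element of $\{i,j\}$ equals one element of $\{k,l\}$ (the four cases $i=k$, $i=l$, $j=k$, $j=l$, each leaving three distinct indices); or $\{i,j\}=\{k,l\}$ as unordered pairs (the two cases $i=k,j=l$ and $i=l,j=k$). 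Here one uses that $G_N$ is even — in dimension $1$ because $|2\sin(\pi x/N)|$ is even, in dimension $2$ because of the symmetry $(m,n)\mapsto(-m,-n)$ in the Eisenstein series defining $E_N$ — so that the four three-index cases all contribute the same quantity, and likewise the two two-index cases. Applying the correlation-function identity \eqref{corr} to each piece gives
\begin{multline*}
\E[S^2]=2\int_{[0,N]^2}G_N(x-y)^2\rho_2(x,y)\,dx\,dy+4\int_{[0,N]^3}G_N(x-y)G_N(x-z)\rho_3(x,y,z)\,dx\,dy\,dz\\
+\int_{[0,N]^4}G_N(x-y)G_N(z-t)\rho_4(x,y,z,t)\,dx\,dy\,dz\,dt,
\end{multline*}
while $(\E[S])^2=\int_{[0,N]^4}G_N(x-y)G_N(z-t)\rho_2(x,y)\rho_2(z,t)\,dx\,dy\,dz\,dt$. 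All these integrals converge absolutely, since the domains are bounded, $G_N$ and $G_N^2$ are locally integrable, and the $\rho_k$ are bounded.

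\textbf{The vanishing-average input.} The key observation is that $G_N$ is periodic with period lattice $(N\mz)^d$ and has zero average over a period — this is exactly \eqref{log0} in dimension $1$ and \eqref{zeroavE} in dimension $2$ — so that $\int_{[0,N]^d}G_N(x-w)\,dw=0$ for every $x$. Consequently, in any integral of a product of $G_N$- and $T$-factors over a Cartesian power of $[0,N]^d$, if some integration variable $w$ occurs inside exactly one factor $G_N(\,\cdot-w)$ and in no $T$-factor, the integral vanishes (integrate $w$ out first).

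\textbf{Substitution and collection.} I would then insert \eqref{reciprocal} with $\rho_1\equiv1$, i.e. $\rho_2(x,y)=1-T_2(x-y)$, $\rho_3=1-\bigl(T_2(x,y)+T_2(x,z)+T_2(y,z)\bigr)+T_3(x,y,z)$, and the analogous six-$T_2$ / three-$T_2T_2$ / four-$T_3$ / one-$T_4$ expansion of $\rho_4$, into $\V(S)=\E[S^2]-(\E[S])^2$, and collect. The ``constant'' ($T$-free) contributions vanish by the zero-average identity (and in any case $\rho_4-\rho_2\otimes\rho_2$ has no constant part), and by the same identity all $T_2$-terms of $\rho_3$ except the $T_2(y,z)$ one, together with all the ``non-crossing'' $T_2$-terms and all four $T_3$-terms of $\rho_4-\rho_2\otimes\rho_2$, decouple and vanish. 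What survives is precisely $2\int G_N^2$ and $-2\int G_N^2\,T_2$ from the two-index part, namely $I_1$ and $I_4$; $-4\int G_N(x-y)G_N(x-z)T_2(y,z)=I_2$ and $4\int G_N(x-y)G_N(x-z)T_3=I_5$ from the three-index part; and, from $\rho_4-\rho_2\otimes\rho_2$, the two crossing products $T_2(x,z)T_2(y,t)$ and $T_2(x,t)T_2(y,z)$, which coincide after the relabelling $z\leftrightarrow t$ and using evenness of $G_N$, giving $I_3$, together with $-\int G_N(x-y)G_N(z-t)T_4=I_6$. Summing yields $\V(S)=I_1+\dots+I_6$.

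\textbf{Main obstacle.} The only real difficulty is organizational: keeping track of the combinatorial multiplicities (the factors $2$ in $I_1,I_3,I_4$, the factors $4$ in $I_2,I_5$, the single $T_4$ in $I_6$) and of the signs coming from \eqref{deftn}--\eqref{reciprocal}, and then checking systematically that every one of the numerous leftover terms either decouples via the zero-average identity or cancels a term of $(\E[S])^2$. There is no analytic subtlety, since the window is bounded; the whole content is this bookkeeping, plus the observation that $G_N$ having mean zero on the torus $\T_N$ is exactly what annihilates the apparently non-decaying terms.
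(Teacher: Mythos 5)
Your proposal is correct and follows essentially the same route as the paper: expand the square by coincidence patterns of the indices (using evenness of $G_N$ to collapse the four three-index and two two-index cases), pass to correlation functions, substitute the cluster expansion, and kill the decoupled terms via $\int_{[0,N]^d}G_N=0$, with the surviving non-crossing product $\left(\int G_N T_2\right)^2$ cancelling against $(\E S)^2$. The only difference is presentational — you isolate the zero-average annihilation as a general principle, whereas the paper applies it term by term in \eqref{avect} — so there is nothing to add.
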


\begin{proof}
Expanding the square, we have
\begin{align}
\label{li1}& \(\sum_{i\neq j, a_i, a_j \in [0,N]} G_N(a_i- a_j)  \)^2 =
\sum_{i,j,k,l \ \mathrm{p.d.}} G_N(a_i- a_j)G_N(a_k- a_l) \\
\label{li2}& + \sum_{i,j,l\ \mathrm{p.d.}} G_N(a_i- a_j)G_N(a_i- a_l) +
\sum_{i,j,k \ \mathrm{p.d.}} G_N(a_i- a_j) G_N(a_k- a_i)\\
\label{li3}& +  \sum_{i,j,l \
\mathrm{p.d.}} G_N(a_i- a_j) G_N(a_j-a_l)+
 \sum_{i,j,k \
\mathrm{p.d.}} G_N(a_i- a_j) G_N(a_k-a_j)\\
\label{li4} & +
  \sum_{i\neq j} G_N(a_i-a_j)^2 + \sum_{i\neq j} G_N(a_i- a_j)  G_N(a_j-a_i),\end{align} where the sums are still taken over points in $[0,N]$, and p.d. stands for ``pairwise distinct''.
Since $G_N$ is even, it is clear  that all the sums in \eqref{li2} and \eqref{li3} are equal, and the sums in \eqref{li4} as well.
Using $k$-point correlation functions (cf. \eqref{corr}), we thus may write
\begin{multline}
\label{avecro}
\E \(\sum_{i\neq j, a_i, a_j \in [0,N]} G_N(a_i- a_j)\)^2\\
 = \int_{[0,N]^4}
 G_N(x-y)G_N(z-t) \ro_4(x,y,z,t) \, dx\, dy\, dz\, dt \\
 + 4\int_{[0,N]^3} G_N(x-y)\ro_3(x,y,z)\, dx\, dy\, dz+ 2\int_{[0,N]^2} G_N(x-y)^2 \ro_2(x-y)\, dx\, dy.
 \end{multline}

It is now convenient to express this in terms of the cluster functions $T_k$, using \eqref{reciprocal}, which yields
\begin{eqnarray*}
 \ro_2(x,y) & = & T_1(x)T_1(y)-T_2(x,y)=1-T_2(x,y)\\
 \ro_3(x,y,z)& =& 1-T_2(x,y)-T_2(x,z)- T_2(y,z)+T_3(x,y,z)\\
\ro_4(x,y,z,t)& =& 1-T_2(x,y) -T_2(x,z)-T_2(x,t) -T_2(y,z)-T_2(y,t)- T_2(z,t) \\
 & & + T_3(x,y,z)+T_3(x,y,t)+T_3(x,z,t)+T_3(y,z,t)\\
 & & + T_2(z,y)T_2(z,t)+T_2(x,z)T_2(y,t)+T_2(x,t)T_2(y,z) - T_4(x,y,z,t).\end{eqnarray*}
Substituting these relations into \eqref{avecro} and using that $\int_0^N G_N=0$, we obtain (writing the terms in the same order)
\begin{align} \label{avect}
& \E\(\sum_{i\neq j, a_i, a_j \in [0,N]} G_N(a_i- a_j)\)^2 =
\( \int_{[0,N]^2} G_N(x-y)T_2(x,y) \, dx\, dy \)^2 \\
\nonumber &  + 2 \int_{[0,N]^4]}
 G_N(x-y)G_N(z-t) T_2(x,z)T_2(y,t) \, dx\, dy\, dz\, dt \\
 \nonumber & - \int_{[0,N]^4 } G_N(x-y)G_N(z-t)T_4(x,y,z,t)\, dx\, dy\, dz\, dt\\
\nonumber & + 4 \int_{[0,N]^3 } G_N(x-y)G_N(x-z)T_3(x,y,z)\, dx\, dy\, dz\\
\nonumber & 
-4 \int_{[0,N]^4} G_N(x-y)G_N(x-z)T_2(y,z)\, dx\, dy\, dz\\
\nonumber & + 2 \int_{[0,N]^2} G_N(x-y)^2 \, dx\, dy- 2\int_{[0,N]^2} G_N(x-y)^2 T_2(x,y)\, dx\, dy.\end{align}
Similarly  (and as we have seen in the proof of Theorem \ref{th1}) we have
 $$\E\sum_{i\neq j}G_N(a_i-a_j)= - \int_{[0,N]^2} G_N(x-y)T_2(x,y)\, dx\, dy$$ and the result follows.\end{proof}

\subsection{The one-dimensional case}
\begin{theo} \label{th4}
For the sine-$\beta$ processes with $\beta=1,2,4$ as described above, we have 
$$\lim_{N\to \infty}\V (\Wc)=0.$$
\end{theo}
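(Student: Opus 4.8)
The quantity $\Wc$ equals, by \eqref{wc1d2}, $\frac1N\sum_{i\neq j}G_N(a_i-a_j)+\log N$ with $G_N(v)=-\log|2\sin\frac{\pi v}{N}|$ and the $\log N$ deterministic, so $\V(\Wc)=N^{-2}\,\V\bigl(\sum_{i\neq j}G_N(a_i-a_j)\bigr)=N^{-2}(I_1+\cdots+I_6)$ by Lemma~\ref{prform}. The plan is to show that each $N^{-2}I_k$ converges and that the six limits sum to zero. Two elementary facts about $G_N$ drive the computation: the Clausen expansion $G_N(v)=\sum_{m\ge1}m^{-1}\cos(2\pi mv/N)$ (cf. \eqref{clausen}), which puts $G_N$, in frequency, on the progression $\{m/N\}$ with zero mean over a period; and the bounds $\int_0^N|G_N(v)|^k\,dv=O(N)$ for $k=1,2$ together with $|G_N(v)|\le C\log N$ away from an $N^{-1}$-neighbourhood of $N\mz$. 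On the probabilistic side the inputs are $\int_{\mr}T_2=1$ (verified for $\beta=1,2,4$ inside the proof of Proposition~\ref{pro23}), the boundedness and continuity at $0$ of $\widehat{T_2}$, and decay estimates for the cluster functions $T_2,T_3,T_4$ discussed at the end.

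First I would isolate the $O(N^2)$ contributions, which come from $I_1,I_2,I_3$. For $I_1=2\int_{[0,N]^2}G_N(x-y)^2=2\int_{-N}^{N}(N-|v|)G_N(v)^2\,dv$, inserting the Clausen series for $G_N(v)^2$ and using the orthogonality $\int_{-N}^{N}(N-|v|)e^{2\pi ikv/N}\,dv=N^2\delta_{k,0}$ for $k\in\mz$ collapses the double sum to its diagonal, giving the exact value $I_1=\tfrac{\pi^2}{6}N^2$. For $I_2$ and $I_3$ I would expand every $G_N$ by its Clausen series and use $T_2(v)=\int\widehat{T_2}(\xi)e^{2\pi i\xi v}\,d\xi$, then carry out each $[0,N]$-integration, which yields a Dirichlet factor $D_N(\theta):=\int_0^N e^{2\pi i\theta x}\,dx$ with $D_N(k/N)=N\delta_{k,0}$ and $\tfrac1N|D_N(\cdot)|^2$ an approximate identity of width $N^{-1}$. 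In $I_2=-4\int_{[0,N]^3}G_N(x-y)G_N(x-z)T_2(y-z)$ the $x$-integration produces $D_N((m+m')/N)$, which forces $m'=-m$ exactly; what remains is $\int\widehat{T_2}(\xi)|D_N(\xi-m/N)|^2\,d\xi=N\widehat{T_2}(m/N)+o(N)$, so $I_2=-N^2\sum_{m\ne0}m^{-2}\bigl(\widehat{T_2}(m/N)+o(1)\bigr)\to-\tfrac{\pi^2}{3}N^2$ by dominated convergence. For $I_3$, grouping the $x,z$-integrations against $T_2(x-z)$ into $B_{m,m'}:=\int_{[0,N]^2}e^{2\pi i(mx+m'z)/N}T_2(x-z)\,dx\,dz$ (and the $y,t$-ones into $\overline{B_{m,m'}}$) gives the manifestly nonnegative expression $I_3=\tfrac12\sum_{m,m'\ne0}|m|^{-1}|m'|^{-1}|B_{m,m'}|^2$; for $m'=-m$ one has $B_{m,-m}=\int_{-N}^{N}(N-|v|)T_2(v)e^{2\pi imv/N}\,dv\sim N\widehat{T_2}(m/N)$, so the diagonal contributes $\tfrac12 N^2\sum_{m\ne0}\widehat{T_2}(m/N)^2 m^{-2}\to\tfrac{\pi^2}{6}N^2$, while for $m+m'\ne0$ the incomplete-period cancellation gives $|B_{m,m'}|\le C\min\bigl(N/|m+m'|,1\bigr)$ (using $|v|T_2(v)\in L^1$ for $\beta=1,2$, and the oscillation argument below for $\beta=4$), and summing shows the off-diagonal is $O((\log N)^2)=o(N^2)$. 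Hence $N^{-2}(I_1+I_2+I_3)\to\tfrac{\pi^2}{6}-\tfrac{\pi^2}{3}+\tfrac{\pi^2}{6}=0$.

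It remains to show $I_4=I_5=I_6=o(N^2)$, which is where decay of the cluster functions enters. For $\beta=2$ (determinantal), $|K^{(2)}(x,y)|\le C(1+|x-y|)^{-1}$, so by \eqref{tdet} the functions $|T_3|,|T_4|$ are dominated by sums of cyclic products $\prod_i(1+|x_{\sigma(i)}-x_{\sigma(i+1)}|)^{-1}$; together with $T_2(v)=O(|v|^{-2})$ this reduces each of $I_4,I_5,I_6$ (after a change of variables fixing one integration variable, which costs a factor $\le N$) to an integral over $[0,N]$ of a translate of $\bigl(\text{a power of }\log N\bigr)\times\bigl(\text{an }L^1\text{ function}\bigr)$, hence to $O(N\,\mathrm{polylog}\,N)=o(N^2)$. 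For $\beta=1$ the same scheme applies because \emph{every} entry of $K^{(1)}$ decays like $(1+|x-y|)^{-1}$ — for the off-diagonal entry this is the estimate $\int_0^{\pi|v|}\tfrac{\sin t}{t}\,dt-\tfrac{\pi}{2}=O(|v|^{-1})$ already used in the proof of Proposition~\ref{pro23} — so \eqref{tqdet} again bounds $T_3,T_4$ by cyclic products of inverse distances. For $\beta=4$ the entry $\tfrac1{2\pi}\int_0^{2\pi(x-y)}\tfrac{\sin t}{t}\,dt$ does not decay; here, exactly as in the $\beta=4$ part of the proof of Proposition~\ref{pro23} and the decomposition $T_2^{(4)}=\bigl(O(|v|^{-2})\bigr)-\tfrac14\,\mathrm{sgn}(v)\,\partial_v\tfrac{\sin2\pi v}{2\pi v}$ used there, I would split off the non-decaying oscillatory pieces and recover the missing decay by integrating by parts against $G_N$ and the remaining smooth factors (which vary on scale $N$), again landing on an $o(N^2)$ bound.

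The genuine obstacle is the decay analysis of the last paragraph for the Pfaffian processes $\beta=1,4$: the higher cluster functions are traces of cyclic products of $2\times2$ matrix kernels, so one cannot simply multiply scalar decay rates and must track which matrix entries actually occur, and for $\beta=4$ one must repeatedly use oscillation/integration by parts to compensate for the lone non-decaying entry while keeping every error term $o(N^2)$ uniformly in $N$. Making the limits in $I_2,I_3$ rigorous (uniformity in $m$ of the approximate-identity error and of the partial-period bound on $B_{m,m'}$) is routine but needs care near $\xi=\pm1$ for $\beta=4$, where $\widehat{T_2}$ is not uniformly continuous; there the smallness of $m^{-2}$ for $|m|\sim N$ saves the day. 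Once these estimates are in place, the exact cancellation $\tfrac{\pi^2}{6}-\tfrac{\pi^2}{3}+\tfrac{\pi^2}{6}=0$ of the leading terms — which uses only $\int T_2=1$ and is therefore identical for all three $\beta$ — finishes the proof.
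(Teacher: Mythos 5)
Your proposal is correct and rests on the same skeleton as the paper's proof: the decomposition of the variance via Lemma~\ref{prform}, the identification of $I_1,I_2,I_3$ to leading order followed by the exact cancellation of their $O(N^2)$ parts (which uses only $\int T_2=1$), and cluster-function decay plus integration by parts for the remaining terms $I_4,I_5,I_6$, including the special treatment of the non-decaying entry of $K^{(4)}$. Where you genuinely diverge is the engine used on $I_1,I_2,I_3$: you work in frequency, expanding $G_N$ by the Clausen series and exploiting the exact orthogonality $\int_{-N}^{N}(N-|v|)e^{2\pi ikv/N}dv=N^2\delta_{k,0}$ together with the Fej\'er-type approximate identity $\tfrac1N|D_N|^2$, whereas the paper stays in physical space and shows by a mean-value/domain-splitting argument that $\log|2\sin\pi(x-y+t/N)|$ may be replaced by $\log|2\sin\pi(x-y)|$ against $T_2(t)\,dt$. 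Your route buys explicit constants ($\tfrac{\pi^2}{6},-\tfrac{\pi^2}{3},\tfrac{\pi^2}{6}$), makes $I_3$ manifestly a nonnegative quadratic form in the $B_{m,m'}$, and isolates cleanly that only $\widehat{T_2}(0)=1$ and continuity of $\widehat{T_2}$ at $0$ matter; the price is that for $\beta=1,4$ the cluster function is only conditionally integrable, so $\widehat{T_2}$, the truncation to $[-N,N]$, and the dominated-convergence step must all be justified by the same oscillation arguments the paper deploys directly in real variables, and for $\beta=4$ the logarithmic singularity of $\widehat{T_2}$ must be absorbed by the $m^{-2}$ weights --- you correctly flag both points. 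Two small inaccuracies that do not affect the conclusion: $|v|T_2(v)$ is \emph{not} in $L^1(\mr)$ for $\beta=1,2$ (it is $O(|v|^{-1})$), but the truncated integral $\int_{-N}^N|v||T_2(v)|\,dv=O(\log N)$ is all you need; correspondingly the off-diagonal bound should read $|B_{m,m'}|\le C\min\bigl(N/|m+m'|,\log N\bigr)$ rather than with a ``$1$'', which still yields an $O(\mathrm{polylog}\,N)=o(N^2)$ off-diagonal contribution. Note also that the bound $|B_{m,m'}|\le CN/|m+m'|$ alone would only give $O(N^2)$ for the off-diagonal sum, so the second, uniform-in-$(m,m')$ bound is genuinely needed there.
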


\begin{proof}
Since we already know from Proposition \ref{pro23} that $\lim_{N\to \infty} \E\Wc$ exists, and in view of \eqref{wc1d2}, it suffices
to show that 
\begin{equation}\label{vardef}
\lim_{N\to \infty}\frac{1}{N^2}\(  \E\( \sum_{i\neq j, a_i, a_j\in [0,N]} G_N(a_i- a_j)\)^2 - \(\E\sum_{i\neq j, a_i, a_j\in [0,N]} G_N(a_i- a_j)\)^2\) =0.\end{equation}
%where \begin{equation}\label{defgn}
%G_N(x)=- \log \left|2\sin \frac{\pi x}{N}\right|.\end{equation}

We apply Lemma \ref{prform} and
 now deal with all the terms $I_1$ to $I_6$ in \eqref{t1}--\eqref{t6}.
First,  we have
\begin{equation}
\label{t1r}I_1= 2\int_{[0,N]^2} G_N(x-y)^2 \, dx\, dy= 2N^2 \int_{[0,1]^2} \( \log \left|2\sin \pi (x-y)\right|\)^2\, dx\, dy.\end{equation}

For $I_2$, using the explicit expression for $G_N$, making  the change of variables $x'=x/N,  y'=y/N, t=y-z$,  and recalling that $T_2$ is translation-invariant since the process is,\footnote{hence by abuse of notation we write $T_2(x,y)=T_2(x-y)$ as for Theorem \ref{th1}}
we find
\begin{multline}\label{t2r}
I_2= - 4 \int_{[0,N]^4} G_N(x-y)G_N(x-z) T_2(y-z)\, dx\, dy\, dz\\
= - 4 N^2 \int_{[0,1]^2}\int_{[N(y-1), Ny]}  \log |2\sin \pi (x-y)|\log \left|2\sin \pi 
\left(x-y+\frac{t}{N}\right)\right| T_2(t)\, dx\, dy\, dt.\end{multline}
One may like to think that
$\int f(x-y+\frac{t}{N}) T_2(t)\, dt \to f(x-y)$ since $\int T_2=1$. However, $\log |2\sin\cdot  |$ is not  regular enough to apply this reasoning and $\int T_2$ may converge only conditionally. First notice that the cluster functions given in \eqref{t21}-\eqref{t22}-\eqref{t24} satisfy
\begin{equation}
\label{assv1} |T_2(v)|=O\(\frac{1}{|v|}\), \end{equation}
\begin{equation}
\label{assv2}
\int_{|v|>M} T_2(v)\, dv=O\(\frac{1}{M}\).\end{equation}

Pick two exponents $a,b>0 $ with $a+b<1$. Let us examine the  $t$ integral in the right-hand side of \eqref{t2r}. Assume first that $[-N^b, N^b]\subset [N(y-1), Ny]$ and that  $|x-y|>N^{-a}$.  Note that for this to be satisfied it suffices that 
\begin{equation}\label{restric}
(x,y)\in S_{N,a}:=\{ (x, y) \in \mr^2: 
N^{-a}<y<1-N^{-a}, \  |x-y|>N^{-a}\}.\end{equation}
By the mean value formula, we may write for some $|\theta|<N^b$
\begin{equation*}\log \left|2\sin \pi \left(x-y+\frac{t}{N}\right) \right|- \log \left|2\sin \pi (x-y)\right|= \frac{\pi \cos \pi (x-y+\frac{\theta}{N}) }{N\sin \pi (x-y+\frac{\theta}{N})}=O(N^{a-1})\end{equation*} since  we assumed $|x-y|>N^{-a}$.
Thus
\begin{multline}\label{contribpr}\int_{[-N^b, N^b]} \log |2\sin \pi (x-y)|\log \left|2\sin \pi \left(x-y+\frac{t}{N}\right)\right| T_2(t)\, dt\\
=\int_{[-N^b, N^b]} \log^2|2\sin \pi (x-y)| T_2(t)\, dt +O(N^{a-1} \log N) \\
= \log |2\sin \pi(x-y)| \bigl(1- O(N^{-b})\bigr) +O(N^{a-1} \log N)= 
\log |2\sin \pi(x-y)| +O(N^{-b} \log N).\end{multline}
where we have used \eqref{assv1}, then \eqref{assv2} and $|x-y|>N^{-a}$.
We then claim that if $|x-y|>N^{-a}$ we have 
\begin{equation}\label{reste1}
\int_{|t|>N^b} \log |2\sin \pi (x-y)|\log \left|2\sin \pi \left(x-y+\frac{t}{N}\right)\right| T_2(t)\, dt= o(1).\end{equation}
Assuming this, and combining with \eqref{contribpr}
we obtain that 
\begin{multline*}\int_{(x,y)\in [0,1]^2\cap S_{N, a} } \int_{[N(y-1), Ny]}  \log |2\sin \pi (x-y)|\log \left|2\sin \pi \left(x-y+\frac{t}{N}\right)\right| T_2(t)\, dx\, dy\, dt \\=
\int_{(x,y)\in [0,1]^2\cap S_{N,a}}\log^2 |2\sin \pi (x-y)|\, dx\, dy+o(1).\end{multline*}
But it is easy to check, since the integrals converge and \eqref{assv1} holds, that the contributions of the set where \eqref{restric} does not hold are $o(1)$ as $N \to \infty$. We may thus conclude that 
\begin{equation}\label{resultt1} 
I_2 = - 4 N^2\int_{(x,y)\in [0,1]^2}
\log^2 |2\sin \pi (x-y)|\, dx\, dy +o(N^2).\end{equation}
To finish with this $I_2$ term, it remains to prove \eqref{reste1}.
For $\beta=1,2$ this is immediately true since $T_2(v)=O(|v|^{-2})$.
For $\beta =4$, we notice that the same argument that was used above to restrict to $|x-y|>N^{-a}$ can be used to restrict to $|x-y + \frac{t}{N}|>N^{-c}$ (note that the initial integral is symmetric in $y$ and $z$). 
Inserting the formula for $T_2$ \eqref{t24}, and neglecting the $O(1/t^2)$ part of $T_2$,  we thus have to prove that 
$$\int_{|t|>N^{b}, |x-y+\frac{t}{N}| >N^{-c}} 
- \frac{\partial }{\p t}\frac{\sin 2\pi t}{2\pi t} \frac{1}{2\pi}\int_0^{2\pi t} \frac{\sin t}{t} \log \left|2\sin \pi \left(x-y+\frac{t}{N}\right)\right|=o(1).$$
We integrate by parts and find that the boundary terms are negligible,  and there remains to show that 
$$\int_{|t|>N^{b}, |x-y+\frac{t}{N}| >N^{-c}} 
\frac{\sin 2\pi t}{2\pi t} 
\frac{\partial }{\p t} \( \log \left|2\sin \pi \left(x-y+\frac{t}{N}\right)\right|\frac{1}{2\pi} \int_0^{2\pi t} \frac{\sin t}{t}\) \, dt=o(1).$$
If the derivative falls on the second factor, we are back to the $O(1/t^2)$ situation which gives a negligible term, and for the other term we use
$$\frac{\p}{\p t} \log \left|2\sin \pi \left(x-y+\frac{t}{N}\right)\right|= O(N^{c-1})$$ by explicit computation, which gives that the integral is $O(N^{c-1} \log N)=o(1)$.
This completes the treatment of $I_2$.

We turn to $I_3$. Using a similar change of variables, we may write this term
\begin{multline*}
I_3= 2N^2 \int_{[0,1]^2} \int_{[N(x-1),Nx]}\int_{[N(y-1), Ny]}
\log \left|2\sin\pi (x-y)\right|\log \left|2\sin \pi \left(x-y+\frac{v-u}{N}\right)\right| \\ 
\times T_2(u)
T_2(v)\, dx\, dy\, du\, dv.
\end{multline*}
Very similar manipulations to those above show that  $\log \left|2\sin \pi \left(x-y+\frac{v-u}{N}\right)\right| $ can be replaced by $\log |2\sin \pi (x-y)|$ with a $o(N^2)$ correction. This leads us to 
\begin{equation*}
I_3=2N^2 \int_{[0,1]^2 }\log^2 |2\sin \pi(x-y)|\, dx\, dy+o(N^2).
\end{equation*}
For $I_4$, we have
\begin{equation*}
I_4=\int_{[0,N]^2} \log^2 \left|2\sin \frac{\pi(x-y)}{N}\right|T_2(x-y)\, dx\, dy=o(N^2).\end{equation*}
Indeed, we may take away a $\delta N$-neighborhood of the diagonal, outside of  which $\log^2 \left|2\sin  \frac{\pi(x-y)}{N}\right|$ is bounded by $\log^2 N$ and $\int |T_2|$ is  controlled by $\log N $, using \eqref{assv1}. Thus the whole integral is controlled by $N\log^3 N=o(N^2)$.

Adding  the above  results we find that
$$I_1+I_2+I_3+I_4=o(N^2).$$
It remains to show that $I_5$ and  $I_6$ also
give $o(N^2)$ contributions.
The expressions $I_5$ and $I_6$ are estimated using explicit formulas for cluster functions of the sine-$\beta$ processes. First returning to  \eqref{k2}--\eqref{k1} we see that for $\beta=1,2,$ the entries of  $K^{(2)}$ and $K^{(1)}$ are 
$O\bigl(\frac{1}{1+|x-y|}\bigr)$. Combining with \eqref{tdet}--\eqref{tqdet}, it follows that $T_3(x,y,z)= O\(\frac{1}{1+|(x-y)(y-z)(z-x)|}\)$  in both these cases.
For \eqref{k4}, we have 
 \begin{equation}\label{entryk4}
 K^{(4)}(x,y)= \left(\begin{array}{lr}
 O\(\frac{1}{1+|x-y|} \)& O \( \frac{1}{1+|x-y|}\)\\
 O(1)  &  O \( \frac{1}{1+|x-y|}\)\end{array}\right).\end{equation}
  We thus obtain $$T_3(x,y,z)=O\left(\frac{1}{1+|(x-y)(y-z)|}\right)+O\left( \frac{1}{1+|(y-z)(z-x)|}\right)+O\left(\frac{1}{1+|(x-y)(z-x)|}\right).$$
In \eqref{t5} we may first (as above) remove a small neighborhood of the diagonals, off of which $|G_N(x-y)|$ and $|G_N(y-z)|$ are bounded by $O(\log N)$. It then remains to estimate $$\int_{[0,N]^3} |T_3(x,y,z)|\, dx\, dy\, dz.$$
Replacing $T_3$ by its above estimates,  and changing variables to $x+y+z$ and successively two out of $x-y$, $y-z$, and $z-x$, we find $\int_{[0,N]^3} |T_3(x,y,z)|\, dx\, dy\, dz\le O(N \log^3 N)$, and $I_5=o(N^2)$.

We finally turn to $I_6$.
The formula for $T_4 $ is given by \eqref{tdet}--\eqref{tqdet}. Comparing to \eqref{k2}--\eqref{k1}, we see that for $\beta=1,2$, we have $$T_4(x,y,z,t)= O\left( \frac{1}{1+|(x-y)(y-z)(z-t)(t-z)|}\right).$$ The same reasoning as for $I_5$ gives $I_6=o(N^2)$.

For $\beta=4$, in the formula for $T_4$ obtained with \eqref{k4}, in view of \eqref{entryk4}, there are terms which a priori have insufficient decay: they are terms of the form
$$ \frac{\p}{\p x_1} \frac{\sin\pi(x_1-x_2)}{2\pi(x_1-x_2)} \frac{\p }{\p x_3}
\frac{\sin 2\pi(x_3-x_4)}{2\pi(x_3-x_4)} \int_0^{2\pi(x_2-x_3)}\frac{\sin t}{t}\, dt \int_0^{2\pi(x_4-x_1)}\frac{\sin t}{t}\, dt,
$$ where $(x_1,\dots, x_4)$ is a permutation of the variables $x,y,z,t$.
This leads to two different types of integrals
\begin{multline}\label{typea}
\int_{[0,N]^4}
\log \left|2\sin \frac{\pi(x-y)}{N}\right|
\log \left|2\sin \frac{\pi(z-t)}{N}\right|  \\ \times
\int_0^{2\pi(x-y)}\frac{\sin s}{s}\, ds\int_0^{2\pi(z-t)} \frac{\sin s}{s}\,ds \cdot \frac{\p }{\p x} \frac{\sin 2\pi(x-z)}{2\pi(x-z)}\frac{\p }{\p y} 
\frac{\sin 2\pi(y-t)}{2\pi(y-t)}\, dx\,dy\,dz\,dt\end{multline}
and
\begin{multline}\label{typeb}
\int_{[0,N]^4}
\log \left|2\sin \frac{\pi(x-y)}{N}\right|
\log \left|2\sin \frac{\pi(z-t)}{N}\right| \\ \times
\int_0^{2\pi(x-z)}\frac{\sin s}{s}\, ds\int_0^{2\pi(y-t)} \frac{\sin s}{s}\,ds \cdot \frac{\p }{\p x} \frac{\sin 2\pi(x-y)}{2\pi(x-y)}\frac{\p }{\p z}
\frac{\sin 2\pi(z-t)}{2\pi(z-t)}\, dx\,dy\,dz\,dt.\end{multline}

For \eqref{typea}, we may again restrict the domain to $|x-y|>N^a$ with $0<a<1$. 
Then, integrating by parts in $x$ gives boundary terms which are negligible, 
and a new integrand with extra decay, involving $\frac{\p}{\p x} \int_0^{2\pi(x-y)} \frac{\sin s}{s}\, ds$ or $\frac{\p }{\p x} \log \left|2\sin \frac{\pi(x-y)}{N}\right|$. This leads again to $o(N^2)$ contributions.

For \eqref{typeb}, we may first restrict the integral to $|x-z|>N^a$ and $|y-t|>N^a$, using arguments as above. Then, we may replace $
\int_0^{2\pi(x-z)}\frac{\sin s}{s}\, ds$ and $\int_0^{2\pi(y-t)} \frac{\sin s}{s}\,ds $ by $\frac{\pi}{2}sgn(x-z) $ and $\frac{\pi}{2}sgn(y-t)$ respectively, making only a $o(N^2)$ error.
Then note that the integrand in $x-y$ is a locally odd function, so we can remove domains $|x-y|<N^b$, $|z-t|<N^b$ from the integration domain. Finally, integration by parts in $x$ gives additional decay, yielding $o(N^2)$ contribution. 
We conclude that  $I_6=o(N^2)$, and the result follows.
\end{proof}

\begin{coro}For all point processes with finite $\lim_{N\to\infty} \E(\Wc)$ and $\lim_{N\to \infty} \V(\Wc)=0$, 
 $$\Wc \to \lim_{N\to \infty} \E\Wc \quad \text{as } \ N \to \infty$$
 in $L^2(\Omega)$ and thus in probability.\end{coro}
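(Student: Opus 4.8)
The plan is to reduce the statement to an elementary bias--variance decomposition, since all the substantive work has already been done in Propositions~\ref{pro23} and Theorem~\ref{th4} (and the corresponding two-dimensional statements). Write $L := \lim_{N\to\infty}\E\Wc$, which exists and is finite by hypothesis. First I would expand the second moment about the \emph{limiting} mean:
$$\E\bigl|\Wc - L\bigr|^2 = \E\bigl|(\Wc - \E\Wc) + (\E\Wc - L)\bigr|^2 .$$
Because $\E\Wc - L$ is, for each fixed $N$, a deterministic constant, the cross term equals $2(\E\Wc - L)\,\E[\Wc - \E\Wc] = 0$, so one is left with the exact identity
$$\E\bigl|\Wc - L\bigr|^2 = \V(\Wc) + \bigl(\E\Wc - L\bigr)^2 .$$

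Second, I would simply feed in the two hypotheses: $\V(\Wc)\to 0$ as $N\to\infty$, and $(\E\Wc - L)^2\to 0$ by the very definition of $L$ as $\lim_{N\to\infty}\E\Wc$. Hence the right-hand side tends to $0$, which is precisely the asserted $L^2(\Omega)$ convergence $\Wc\to L$. Finally, convergence in probability is deduced from $L^2$ convergence via Chebyshev's inequality: for any $\delta>0$,
$$\mathrm{Prob}\bigl\{|\Wc - L| > \delta\bigr\} \le \frac{1}{\delta^2}\,\E\bigl|\Wc - L\bigr|^2 \longrightarrow 0 .$$

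There is no real obstacle here; the only point worth stating carefully is that one must center at the limit $L$ rather than at the $N$-dependent expectation $\E\Wc$, so the bias term $(\E\Wc-L)^2$ must be carried along explicitly and is absorbed using the standing assumption that $\E\Wc$ converges. All the analytic difficulty is upstream, in establishing that $\lim_{N\to\infty}\E\Wc$ exists and that $\lim_{N\to\infty}\V(\Wc)=0$ for the processes considered.
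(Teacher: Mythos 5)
Your argument is correct and is exactly the "immediate" proof the paper has in mind (the paper offers no details, saying only that the proof is immediate): the bias--variance identity $\E|\Wc-L|^2=\V(\Wc)+(\E\Wc-L)^2$ together with Chebyshev's inequality. Nothing further is needed.
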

 The proof is immediate.
 Processes satisfying these assumptions and having
 different values for $\lim_{N\to \infty}\E\Wc$ are thus mutually singular, such as $\beta$-sine processes with different $\beta\in\{1,2,4\}$.

\subsection{The two-dimensional case}

\begin{theo}
For the determinantal random point process with kernel \eqref{kginibre} we have 
$$\lim_{N\to \infty}\V
(\Wc)=0.$$\end{theo}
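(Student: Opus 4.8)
The plan is to follow the proof of Theorem \ref{th4}, which in the two-dimensional Ginibre case becomes considerably simpler thanks to the Gaussian decay of the correlation functions. Since $\lim_{N\to\infty}\E\Wc=-\tfrac12(\gamma+\log\pi)$ is finite and, by \eqref{wc2d2}, $\Wc=\frac{1}{N^2}\sum_{i\neq j,\,a_i,a_j\in[0,N]^2}G_N(a_i-a_j)+\log\frac{N}{2\pi\eta(i)^2}$ with $G_N=\frac{1}{2\pi}E_N$, it suffices to prove that $\V\bigl(\sum_{i\neq j}G_N(a_i-a_j)\bigr)=o(N^4)$. I would apply Lemma \ref{prform} (the variables $x,y,z,t$ now ranging over $[0,N]^2$) and check that each $I_1,\dots,I_6$ in \eqref{t1}--\eqref{t6} is $o(N^4)$. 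The ingredients I expect to use repeatedly are: by \eqref{asE} together with the periodicity of $G_N$, $G_N(v)=-\log|v|+\log N+O(1)$ and $|\nabla G_N(v)|=O(|v|^{-1})$ for $v$ in the bulk $|v|\le N/2$, the remaining logarithmic singularities of $G_N$ in $[-N,N]^2$ lying on $\partial([-N,N]^2)$; the identity $I_1=2\int_{([0,N]^2)^2}G_N(x-y)^2\,dx\,dy=2c_0N^4$ for an explicit finite $c_0>0$; the fact that $T_2(v)=e^{-\pi|v|^2}$, so $\int_{\mr^2}T_2=1$ with all moments finite; and, from \eqref{tdet} applied to the kernel \eqref{kginibre} (for which $|K(x,y)|^2=e^{-\pi|x-y|^2}$), the bounds $|T_3(x,y,z)|\le 2e^{-\frac{\pi}{2}(|x-y|^2+|y-z|^2+|z-x|^2)}$ and $|T_4(x,y,z,t)|\le\sum_{C}e^{-\frac{\pi}{2}\sum_{(a,b)\in C}|a-b|^2}$, the sum running over the six $4$-cycles $C$ on $\{x,y,z,t\}$.

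The heart of the argument would be the cancellation $I_1+I_2+I_3=o(N^4)$, mirroring dimension one. In $I_2$ I would substitute $w=z-y$; the constraint $y-w\in[0,N]^2$ removes only a boundary layer of measure $O(N|w|)$ in $y$, which gives an $O(N^3)$ error after using $\int|w|T_2(w)\,dw<\infty$. Passing to $v=x-y$, setting $\lambda_N(v)=|([0,N]^2+v)\cap[0,N]^2|\le N^2$, and using $\int T_2=1$, one gets $I_2=-2I_1-4\int\lambda_N(v)\,G_N(v)\,\Phi_N(v)\,dv+o(N^4)$ with $\Phi_N(v)=\int(G_N(v-w)-G_N(v))T_2(w)\,dw$; splitting the last integral into $|v|\le1$ (where $\lambda_N\le N^2$ and $|G_N|,|\Phi_N|\le C(\log N+|\log|v||)$, contributing $O(N^2(\log N)^2)$) and $|v|>1$ (where the mean value theorem and the decay of $T_2$ give $|\Phi_N(v)|\le C/|v|$ while $|G_N(v)|\le C\log N$, contributing $O(N^3\log N)$, the boundary singularities being weighted away by $\lambda_N$) shows $I_2=-2I_1+o(N^4)$. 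For $I_3$ the analogous substitution $s=x-z$, $w=y-t$, then $v=x-y$, reduces the double convolution to a convolution against $T_2*T_2$, which again integrates to $1$ with finite first moment, so the same argument gives $I_3=I_1+o(N^4)$. Hence $I_1+I_2+I_3=o(N^4)$.

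For $I_4,I_5,I_6$ the exponential decay should make the bounds routine: $I_4$ is bounded by $2N^2\int_{\mr^2}G_N(v)^2T_2(v)\,dv=O(N^2(\log N)^2)$ using $G_N(v)^2\le C(\log N+|\log|v||)^2$; for $I_5$, inserting the bound on $|T_3|$, integrating out one vertex (a factor $N^2$) and changing variables to $v=x-y$, $u=x-z$ gives $|I_5|\le8N^2\bigl(\int_{\mr^2}|G_N(v)|e^{-\frac{\pi}{2}|v|^2}\,dv\bigr)^2=N^2\,O((\log N)^2)$; and for $I_6$, each of the six Gaussian terms bounding $|T_4|$ is handled by fixing one vertex, expressing the other three differences through three edge variables, and completing the square in the single auxiliary variable, turning the bound into $N^2\bigl(\int_{\mr^2}|G_N|e^{-c|\cdot|^2}\bigr)^2=N^2\,O((\log N)^2)$ for a suitable $c>0$. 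Summing, $I_1+\dots+I_6=o(N^4)$, and therefore $\lim_{N\to\infty}\V(\Wc)=0$.

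I expect the main obstacle, despite the structural similarity with Theorem \ref{th4}, to be the bookkeeping needed to extract the exact cancellation of the leading $\Theta(N^4)$ parts of $I_1,I_2,I_3$ while controlling the interplay of the $\log N$ growth of $G_N$ with its $-\log|v|$ diagonal singularity: one has to verify that the convolution errors $\Phi_N$ (and its $I_3$-analogue), the boundary layers coming from the finite box $[0,N]^2$, and the edge and corner singularities of the periodic Green's function $G_N$ all contribute only at order $N^3(\log N)^{O(1)}$.
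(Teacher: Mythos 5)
Your proposal is correct and follows essentially the same route as the paper: Lemma \ref{prform}, the cancellation of the leading $\Theta(N^4)$ parts of $I_1,I_2,I_3$ (the paper groups them as $I_2+2I_1$ and $I_3-I_1$ and uses the uniform continuity of $E_1$ away from $\mz^2$ together with the integrability of the logarithmic singularities, where you use $\int T_2=1$, the mean value theorem and the gradient bound on $G_N$), and direct Gaussian-decay bounds for $I_4,I_5,I_6$. The only real difference is that your treatment of the cancellation is more quantitative, producing explicit $O(N^3\log N)$ error terms where the paper argues qualitatively.
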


\begin{proof} The starting point is again Lemma \ref{prform}.
We note that in view of  \eqref{kginibre}, \eqref{rodet} and \eqref{deftn}, all the cluster functions for that process are exponentially 
decreasing when viewed as functions of pairwise distances between arguments. 
We start with the term \eqref{t2}. Replacing $G_N$ by $\frac{1}{2\pi} E_N$, using the translation invariance,  and changing variables as before ($y-z=u$), we find
\begin{multline*}
I_2+2I_1=\\ \frac{2N^2}{\pi}\int_{[0,1]^2}\int_{Ny-[0,N]^2}E_N(N(x-y))\( E_N(N(x-y))- E_N(N(x-y+u/N))\) T_2(u)dx\, dy\,  du.
\end{multline*}
Noting that in view of the definition of $E$ (cf. \eqref{eis}--\eqref{defE}) we have $E_N(Nx)=E_1(x)$ and from \eqref{asE}, $E_1$ behaves like $C\log|x|$ near $x=0$ (and similarly near 
points of the lattice $\mz^2$).
Given $\eta>0$ there thus exists $\delta>0$ such that 
 $$\int_{[0,1]^2 \cap \{|x-y- \mz^2|<\delta\}} \int_{Ny-[0,N]^2}E_1(x-y)\( E_1(x-y))- E_1(x-y+u/N)\) T_2(u)dx\, dy\,  du<\eta.$$
 On the other hand, still in view of the definition \eqref{eis}--\eqref{defE}, $E_1$ is uniformly continuous away from $\mz^2$, hence 
 we may write, as $N \to \infty$,  $$
 \int_{[0,1]^2 \backslash \{|x-y- \mz^2|<\delta\}} \int_{Ny-[0,N]^2}E_1(x-y)\( E_1(x-y))- E_1(x-y+u/N)\) T_2(u)dx\, dy\,  du=o(1).$$
 Since this is true for any $\eta$, it follows that 
$$I_2+ 2I_1=o(N^2).$$
Similarly,
\begin{multline*}
I_3 -I_1=
\\
\int_{[0,1]^2 }\int_{Nx-[0,N]^2}\int_{Ny-[0,N]^2} E_1(x-y)\(E_1\left(x-y+\frac{u-v}{N}\right)   - E_1(x-y)\) T_1(u)T_2(v)\, dx\, dy\, du\, dv.\end{multline*}
The same reasoning shows that this is $o(N^2)$.
For $I_4$,  the change of variables $x'=x/N$ and $y'=y/N$ yields
$$I_4 =-2N^2\int_{[0,1]^2} E_1(x-y)^2 T_2(N(x-y))\, dx\, dy.$$
We may take out a $\delta$ neighborhood of the diagonal and its translates by $\mz^2$, off of which $E_1(x-y)$ can be bounded by $C \log |x-y|$ and $T_2(\frac{x-y}{N})$ by $e^{-C N^2 \delta^2}$. The whole term is thus $o(N^2)$.

We turn to \eqref{t5}.
From \eqref{tdet} and \eqref{kginibre} we find that $|T_3(x,y,z)|\le e^{-C(|x-y|^2+|y-z|^2 + |x-z|^2)}.$
As above we have
$$I_5= 4\int_{[0,1]^3}E_1(x-y)E_1(x-z)T_2(Nx,Ny,Nz)\, dx\, dy\, dz
$$
and as above we may take out  $\delta$-neighborhoods $|x-y|<\delta$ or $|x-z|<\delta$ or $|y-z|<\delta$ (and their translates by $\mz^2$), outside of which the $E_1$ terms are bounded by $\log$'s 
and $T_3$ by $e^{-C N^2 \delta^2}$. The whole term is thus $o(N^2)$.

A very similar reasoning applies to $I_6$.

Combining all these, we find the result.
\end{proof}

\begin{remark} In view of the proof above, the same result holds for any process such 
that the cluster functions decay sufficiently fast away from diagonals, for example exponentially.
\end{remark}

\section{Miscellaneous computations}\label{sc:misc}
In this section we gather various additional computations of expectations and additional facts.
\subsection{Operations on processes}
In this subsection, we examine the effect on $\lim_{N\to \infty} \E\Wc$ of two common operations on independent processes: superposition and decimation (see \cite{dvj}).
\begin{pro}\label{pro26}
Let $\X_1,\dots , \X_M$ be $M$  independent translation invariant 
point processes with density $1$  and two-point correlation functions $\bigl\{\ro_2^{(i)}\bigr\}_{1\le i\le M}$, satisfying the assumptions of Theorem \ref{th1}. 

Assume that  $\lim_{N\to \infty}\E \Wc(\X_i)<\infty$ for $i=1,\dots,M$. Let $\X$ denote
the superposition of independent processes $\bar{\X}_i$, where $\bar{\X}_i$ denotes the image of the process $\X_i$ under the dilation by factor $M$ of the line. Then, with obvious notation,
$$\lim_{N\to \infty} \E\Wc  (\X)= \log M + \frac{1}{M} \sum_{i=1}^M  \lim_{N\to \infty} \E \Wc (\X_i).$$
\end{pro}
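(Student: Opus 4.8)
The plan is to express the two-point correlation function of the superposition $\X$ explicitly in terms of the $\rho_2^{(i)}$, and then reduce everything to Theorem \ref{th1}. First I would record the behaviour of correlation functions under the dilation $x\mapsto Mx$: if $\X_i$ has density $1$ with correlation functions $\rho_k^{(i)}$, then $\bar\X_i$ has density $1/M$ and $k$-point correlation function $M^{-k}\rho_k^{(i)}(\cdot/M)$; in particular $\bar\rho_1^{(i)}\equiv 1/M$ and, writing $\rho_2^{(i)}(x,y)=1-T_2^{(i)}(x-y)$, one has $\bar\rho_2^{(i)}(x,y)=M^{-2}\bigl(1-T_2^{(i)}((x-y)/M)\bigr)$. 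By independence of the $\bar\X_i$, counting ordered pairs of distinct points according to whether they lie in the same component or in two different ones gives
\[\rho_2^\X(x,y)=\sum_{i=1}^M\bar\rho_2^{(i)}(x,y)+\sum_{i\neq j}\bar\rho_1^{(i)}(x)\bar\rho_1^{(j)}(y)=\frac1{M^2}\sum_{i=1}^M\bigl(1-T_2^{(i)}((x-y)/M)\bigr)+\frac{M-1}{M}.\]
Subtracting from $1$, the constants (note $\tfrac1M+\tfrac{M-1}{M}=1$) and the cross terms cancel and one is left with the clean identity $T_2^\X(v)=\frac1{M^2}\sum_{i=1}^M T_2^{(i)}(v/M)$.

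The second step is to check that $\X$ satisfies the hypotheses of Theorem \ref{th1}. Boundedness of $T_2^\X$ is immediate from that of the $T_2^{(i)}$. For condition 2) and the condition $1-\int_{-\a_N}^{\a_N}T_2^\X=o((\log N)^{-1})$, I would fix one sequence $\{\a_N\}$ valid simultaneously for all $\X_i$ (possible since $M$ is finite and any sequence with $\log N\ll\a_N\ll N^{1/2-\ep}$ is admissible), observe that $\{\a_N/M\}$ is then also admissible, and note that after the substitution $w=v/M$ the relevant integrals for $\X$ become averages over $i$ of the corresponding integrals for $\X_i$ over the rescaled windows; these are $o(1)$, resp.\ $o((\log N)^{-1})$, by the hypotheses on the $\X_i$. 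In the same way $\int_{\mr}T_2^\X(v)\,dv=\frac1M\sum_i\int_{\mr}T_2^{(i)}(w)\,dw=1$ since each $\int T_2^{(i)}=1$ by Theorem \ref{th1} applied to $\X_i$, and $\int\log|v|\,T_2^\X(v)\,dv$ converges.

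Finally I would apply \eqref{resth1} to $\X$ and compute, via the substitution $w=v/M$ together with $\int T_2^{(i)}=1$,
\[\int_{\mr}\log|v|\,T_2^\X(v)\,dv=\frac1M\sum_{i=1}^M\int_{\mr}\bigl(\log M+\log|w|\bigr)T_2^{(i)}(w)\,dw=\log M+\frac1M\sum_{i=1}^M\int_{\mr}\log|w|\,T_2^{(i)}(w)\,dw.\]
Adding $\log 2\pi$ and inserting \eqref{resth1} for each $\X_i$ in the form $\int\log|w|\,T_2^{(i)}(w)\,dw=\lim_{N\to\infty}\E\Wc(\X_i)-\log 2\pi$, the $\log 2\pi$ terms average to a single $\log 2\pi$ which cancels against the one added for $\X$, yielding $\lim_{N\to\infty}\E\Wc(\X)=\log M+\frac1M\sum_{i=1}^M\lim_{N\to\infty}\E\Wc(\X_i)$. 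The only mildly delicate point is the bookkeeping in the middle step --- confirming that the decay hypotheses of Theorem \ref{th1} really do survive the rescaling and the superposition --- but since $M$ is fixed and finite this is routine.
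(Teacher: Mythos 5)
Your proof is correct and follows essentially the same route as the paper: the same decomposition of $\rho_2^{\X}$ into same-component and cross-component pairs, the same identity $T_2^{\X}(v)=\frac{1}{M^2}\sum_{i=1}^M T_2^{(i)}(v/M)$, and the same change of variables feeding into formula \eqref{resth1}. Your verification that the hypotheses of Theorem \ref{th1} survive the rescaling and superposition is in fact slightly more careful than the paper's, which simply asserts that ``the result follows easily''.
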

\begin{proof}
 Let $T_2^{(i)}$ be the  second cluster functions corresponding to $\X_i$.
 Let $\bar{\ro}_2^{(i)}$ now denote the second correlation function for the process $\bar{\X_i}$,  which  has density $1/M$.  We have
 $\bar{\ro}_2^{(i)}(x,y)= \frac{1}{M^2}\ro_2^{(i)}(\frac{x}{M}, \frac{y}{M})$.
 The process $\X$ clearly  has density $1$, and its second correlation function is
 $$\ro_2(x,y)= \sum_{i \neq j \in [1,M]} \ro_1^{(i)}  \ro_1^{(j)}
+ \sum_{i=1}^M \bar{\ro}_2 (x,y)   =M(M-1) \frac{1}{M^2}+ \frac{1}{M^2} \sum_{i=1}^M\ro_2^{(i)}\left(\frac{x}{M}, \frac{y}{M}\right).$$
We also denote by $T_2$  the corresponding second cluster function. We thus
 have  $$T_2(v)= \frac{1}{M} - \frac{1}{M^2} \sum_{i=1}^M \(1-  T_2^{(i)} \left(\frac{v}{M}\right)\)= 
\frac{1}{M^2} \sum_{i=1}^M T_2^{(i)} \left(\frac{v}{M}\right),$$ and it easily follows, with a change of variables,  that
$\int_{- \infty}^\infty T_2(v)\, dv= 1.$
In addition,
\begin{multline}
\int_{-\infty}^{\infty} T_2(v) \log |2\pi v|\, dv=
\int_{-\infty}^\infty    \frac{1}{M^2} \sum_{i=1}^M T_2^{(i)} \left(\frac{v}{M}\right)\log |2\pi v|\, dv\\
= \frac{1}{M}\sum_{i=1}^M \int_{-\infty}^\infty T_2^{(i)} (s) \log |2\pi  Ms|\, ds=
\log M + \frac{1}{M} \sum_{k=1}^M \int_{-\infty}^\infty T_2^{(i)}(v) \log|2\pi v|\, dv.
\end{multline}
The result follows easily using Theorem \ref{th1}.
\end{proof}

In dimension 2, reproducing the proof, but replacing the dilations by factor $M$ by dilations by factor $\sqrt{M}$, we obtain instead the result:
$$\lim_{N\to \infty} \E\Wc  (\X)= \hal\log M + \frac{1}{M} \sum_{i=1}^M  \lim_{N\to \infty} \E \Wc (X_i).$$

For example, superposing two independent processes with the same second correlation function 
leads to an increase of $\lim_{N\to \infty}\E \Wc$ by $\log 2$  in dimension 1 and 
$\hal \log 2 $ in dimension 2. Superposition can thus be seen as ``increasing the disorder".

We next turn to decimation.

One can define a `random decimation' of a process by erasing points at random with probability 1/2. The second correlation function  then transforms into
$$R_2(x,y)= \frac{1}{4}\ro_2 (x,y).$$ But the space needs to be rescaled by a factor 2 in order 
to maintain a  density one, so the correlation function after that is
$$\ro_2'(x,y)= \ro_2(2x, 2y).$$
It is clear in view of Theorems \ref{th1} and \ref{th2} that if a process has finite $\lim_{N\to \infty}\E\Wc$, its decimation will not, since the condition $\int T_2=1$ will be destroyed by this operation.

On the other hand, one can define a `deterministic decimation' by erasing every even (or odd)
point of an (ordered) random point configuration followed by rescaling of the space to keep
the density at $1$. While we cannot say anything about this operation in general, one can observe 
what it does in a couple of cases. 

It is known, see e.g. \cite[page 66]{agz}, that the $\beta=2$ sine process is the 
deterministic decimation of superposition of two $\beta=1$ sine processes, or symbolically
$$
(\text{sine } \beta=2) = \text{decimation}((\text{sine } \beta=1)\sqcup (\text{sine } \beta=1)).
$$
From Proposition~\ref{pro23} we know that $\lim_{N\to \infty} \E\Wc$ is $1-\gamma$ for the left-hand side,
and Proposition~\ref{pro26} says that $\lim_{N\to \infty} \E\Wc$ is $2-\gamma$ for 
$(\text{sine } \beta=1)\sqcup (\text{sine } \beta=1)$. Thus, the deterministic decimation decreased
the value of  $\lim_{N\to \infty} \E\Wc$ by 1. 

Similarly, 
$$
(\text{sine } \beta=4) = \text{decimation}((\text{sine } \beta=1)),
$$
and we see that the decimation decreased the value of $\lim_{N\to \infty} \E\Wc$ from $2-\gamma-\log 2$ to
$\frac32-\gamma-\log 2$.

 \subsection{Discrete $\beta=2$ sine process}\label{sec5}

The $\beta=2$ discrete sine process was first obtained in \cite{boo} as the bulk scaling limit
of the Plancherel measure for symmetric groups, and it was shown in \cite{bkmm} to be the
a universal local scaling limit for a broad family of discrete probabilistic models 
of random matrix type with $\beta=2$. The goal of this section is to compute 
$\lim_{N\to\infty}\E\Wc$ for the suitably scaled discrete sine process embedded into the real
line. By the construction, this provides an interpolation between the case of the perfect 
lattice, for which $\Wc\equiv 0$, and the case of the continuous $\beta=2$ sine process
treated in the previous sections. 

 Let $\ro \in (0,1)$. The discrete sine process with density $\ro$ is a random point process on $\mz$ with the correlation functions ($k\ge 1$)
 $$\ro_k(x_1, \dots, x_k)= \det \left[\frac{\sin \pi \ro(x_i-x_j)}{\pi(x_i-x_j)}\right]_{i,j=1}^k.$$
\begin{pro}
 Embed $\mz $ into $\mr$ via $n \mapsto \ro n$; this turns the discrete sine process of density $\rho$
into a random point process on $\mr$ with density $1$.
 For the latter process we have
\begin{equation}\label{formdiscsin}\lim_{N\to \infty}\E\Wc= \ro \log \ro + \frac{2}{\ro} \sum_{u=1}^\infty \( \frac{\sin (\ro \pi u)}{\pi u}\)^2 \log (2\pi \ro u).
\end{equation}
\end{pro}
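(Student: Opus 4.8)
The plan is to compute $\E\Wc$ directly from the definition \eqref{wc1d2}, working with the discrete correlation functions rather than invoking Theorem \ref{th1}, which does not apply here because the embedded process is carried by the lattice $\rho\mz$ and is not absolutely continuous. It is convenient to let $N\to\infty$ along $N=\rho M$ with $M\in\mz_{>0}$; the general case is analogous, the non‑integer part of $N/\rho$ changing the sums below only by $o(N)$. For such $N$ the points of the embedded process in $[0,N)$ occupy the $M$ sites $\rho\{0,1,\dots,M-1\}$, and since $\tfrac{\pi\rho(m-n)}{N}=\tfrac{\pi(m-n)}{M}$, the fact that the discrete sine process has two‑point correlation function equal to $\rho^2-\bigl(\tfrac{\sin\pi\rho(m-n)}{\pi(m-n)}\bigr)^2$ at sites $\rho m\neq\rho n$ gives
\begin{equation*}
\E\sum_{i\neq j,\ a_i,a_j\in[0,N)}\log\left|2\sin\tfrac{\pi(a_i-a_j)}{N}\right| \;=\; \sum_{0\le m\ne n\le M-1}\log\left|2\sin\tfrac{\pi(m-n)}{M}\right|\Bigl(\rho^2-\bigl(\tfrac{\sin\pi\rho(m-n)}{\pi(m-n)}\bigr)^2\Bigr).
\end{equation*}

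The first step is to split the right‑hand side by the difference $w=m-n$, each value $1\le|w|\le M-1$ occurring $M-|w|$ times. The ``constant part'' $\rho^2\sum_{0\le m\ne n\le M-1}\log|2\sin\tfrac{\pi(m-n)}{M}|$ is exact: grouping by $w$ and using $\log|2\sin\tfrac{\pi w}{M}|=\log|2\sin\tfrac{\pi(M-w)}{M}|$ it equals $\rho^2 M\log M$, which is just the product formula $\prod_{w=1}^{M-1}2\sin\tfrac{\pi w}{M}=M$, equivalently \eqref{wdez} (i.e.\ $\Wc(\mz)=0$). The ``cluster part'' is $-2\sum_{w=1}^{M-1}(M-w)\,\log\bigl|2\sin\tfrac{\pi\rho w}{N}\bigr|\,f(w)$, where $f(w):=\bigl(\tfrac{\sin\pi\rho w}{\pi w}\bigr)^2=O(w^{-2})$. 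For this I would use the exact splitting $\log\bigl|2\sin\tfrac{\pi\rho w}{N}\bigr|=\log(2\pi\rho w)-\log N+g_M(w)$ with $g_M(w):=\log\bigl|\tfrac{\sin(\pi w/M)}{\pi w/M}\bigr|\le 0$.

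The second step is the asymptotics. Dividing by $N=\rho M$, the three pieces of the cluster part behave as follows. The $\log(2\pi\rho w)$ piece converges to $\tfrac{2}{\rho}\sum_{w\ge1}\log(2\pi\rho w)f(w)$ by dominated convergence, the triangular weight $1-w/M$ contributing only $O(M^{-1}(\log M)^2)$. The $-\log N$ piece equals $-(1-\rho)\log N+o(1)$, because $\sum_{w=1}^{M-1}(M-w)f(w)=\tfrac{\rho(1-\rho)}{2}M+O(\log M)$ by the classical identity $\sum_{w\ge1}\tfrac{\sin^2(\pi\rho w)}{(\pi w)^2}=\tfrac{\rho(1-\rho)}{2}$ (the Fourier expansion of a Bernoulli polynomial). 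The $g_M(w)$ piece is $o(1)$, using $|g_M(w)|=O((w/M)^2)$ for $w\le M/2$ and, writing $w=M-j$ near the endpoint, $|g_M(M-j)|\le C+\log M$ together with $f(M-j)=O(M^{-2})$. Collecting: the cluster part contributes $\tfrac{2}{\rho}\sum_{w\ge1}\log(2\pi\rho w)f(w)-(1-\rho)\log N+o(1)$ to $\E\Wc$, the constant part contributes $-\rho^2M\log M/N=-\rho\log N+\rho\log\rho$, and the definition \eqref{wc1d2} adds $+\log N$. The coefficients of $\log N$ sum to $-\rho-(1-\rho)+1=0$, leaving exactly
\begin{equation*}
\lim_{N\to\infty}\E\Wc=\rho\log\rho+\frac{2}{\rho}\sum_{w=1}^\infty\Bigl(\frac{\sin\pi\rho w}{\pi w}\Bigr)^2\log(2\pi\rho w),
\end{equation*}
which is \eqref{formdiscsin}.

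The main obstacle is precisely this cancellation of divergent $\log N$ terms: one cannot pass to the limit term by term in the cluster sum, since each summand depends on $N$, so one must extract the $-\log N$ coming from $\log\bigl|2\sin\tfrac{\pi\rho w}{N}\bigr|\sim\log(2\pi\rho w)-\log N$ over the $\Theta(M)$ bulk terms and recognize — via $\sum_w f(w)=\rho(1-\rho)/2$ — that it compensates exactly the $\log M$ of the $\rho^2$ constant part together with the bare $+\log N$ of the definition. Everything else, namely the triangular weight $M-w$ and the logarithmic singularity of $\log|2\sin\tfrac{\pi w}{M}|$ near $w=M$, is routine once one uses that the cluster function decays like $w^{-2}$, which forces the corresponding contributions to be $O(M^{-1}\log M)$ or smaller.
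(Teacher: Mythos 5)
Your proposal is correct and follows essentially the same route as the paper: split $\ro_2=\ro^2-\bigl(\tfrac{\sin\pi\ro w}{\pi w}\bigr)^2$, evaluate the $\ro^2$ part exactly via $\Wc(\mz)=0$ (your product formula $\prod_{w=1}^{M-1}2\sin\tfrac{\pi w}{M}=M$ is the same identity), replace $\log\bigl|2\sin\tfrac{\pi\ro w}{N}\bigr|$ by $\log(2\pi\ro w)-\log N$ in the cluster part, and use $\sum_{w\ge1}\tfrac{\sin^2(\pi\ro w)}{\pi^2w^2}=\tfrac{\ro(1-\ro)}{2}$ to see the $\log N$ coefficients cancel. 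Your treatment of the error term $g_M(w)$ near $w=M$ and of the triangular weight is somewhat more explicit than the paper's, but the decomposition and the key identities are identical.
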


\begin{proof}
For the calculation, we assume that $\frac{N}{\ro}$ is an integer (the same argument however 
should hold without this assumption by examining more carefully error terms).
The calculation is then can be viewed as a discrete version of that of Theorem \ref{th1}.
First, by definition \eqref{wc1d2} of $\Wc$, we have
\begin{equation*}
\E \Wc= - \frac{1}{N} \sum_{i\neq j \in [1, N/\ro]} \ro_2 (i,j)
 \log \left|2\sin \frac{\pi \ro (i-j)}{N} \right|+\log N\end{equation*}
 and since $\ro_2(i,j)=\ro^2  - \left( \frac{\sin \pi \ro (i-j)}{\pi(i-j)}\right)^2$ 
we find
 \begin{multline}\label{Wcsin}
 \E \Wc= \frac{1}{N}  \sum_{i \neq j \in [1, N/\ro] } \( \frac{\sin \pi \ro (i-j)}{\pi(i-j)}   \)^2    \log \left|2\sin \frac{\pi \ro (i-j)}{N}\right| + \log N \\ - \frac{\ro^2}{N} \sum_{i \neq j \in [1, N/\ro] } \log \left|2\sin \frac{\pi \ro (i-j)}{N}\right| .\end{multline}
  Let us first examine the contribution of the last sum.
  From the knowledge of $\Wc(\mz)=0$, we know that for $K\in \mz$
 we have
 $$\lim_{K\to \infty}  \frac{1}{K}\sum_{i\neq j \in [1,K]} \log \left|2\sin \frac{ \pi (i-j)}{K}\right|+\log K=0.$$
 Applying this to $K = N/\ro$ we find that
\begin{equation}\label{knr}
\frac{\ro^2}{N} \sum_{i \neq j \in [1, N/\ro] } \log \left|2\sin \frac{\pi (i-j)}{N/\ro}\right| = - \ro \log N/\ro + o(1).\end{equation}
We next turn to the first two terms in \eqref{Wcsin}.  As in Theorem \ref{th1}, we expect only the near diagonal terms to contribute, so that
$$\log \left|2\sin \frac{\pi \ro(i-j)}{N}\right|\sim \log \left|\frac{2\pi \ro (i-j)}{N}\right|=\log |2\pi \ro (i-j)|- \log N.$$
Inserting this and \eqref{knr} into \eqref{Wcsin} we find
\begin{equation}\label{Wcsin2}
\E \Wc\sim\frac{1}{N}  \sum_{i \neq j \in [1, N/\ro] } 
\left( \frac{\sin \pi \ro (i-j)}{\pi(i-j)}   \right)^2   
( \log \left|2\pi \ro (i-j)\right| -\log N ) -
\ro \log N/\ro + \log N +o(1).
\end{equation}
 We first focus on
 \begin{align}\label{lonn}
 \frac{- \log N }{N}  \sum_{i \neq j \in [1, N/\ro] } \( \frac{\sin \pi \ro (i-j)}{\pi(i-j)}   \)^2 &= & - \frac{\log N}{N} \sum_{u=1}^{N/\ro -1} (N/\ro - u) \frac{2\sin^2 (\pi \ro u)}{\pi^2 u^2}\\ \nonumber &=& - \frac{\log N}{\ro} \sum_{u=1}^{N/\ro -1}
 \frac{1-\cos 2\pi \ro u}{\pi^2 u^2}+o(1).\end{align}
Indeed, we can bound $ \sum_{i=1}^{N/\ro } u \frac{2 \sin^2 (\pi \ro u)}{\pi^2 u^2} $ by $\sum_{i=1}^{N/\ro} \frac{1}{u}=O(\log N/\ro)$ and this multiplied by $\frac{\log N}{N}$ is negligible as $N\to + \infty$.
The last sum then appears as a Fourier series and can be computed explicitly, which leads to
\begin{equation}\label{rm1} - \frac{\log N}{\ro} \sum_{u=1}^{N/\ro -1}
 \frac{1-\cos 2\pi \ro u}{\pi^2 u^2}= \frac{\log N}{\ro} \(\frac{\pi^2}{6}- \frac{1}{12}(12 \pi^2 \ro^2 - 12 \pi \ro + 2\pi^2)\) = (\ro -1)\log N.\end{equation}
We next turn to
\begin{equation}\label{rm2}\frac{1}{N}  \sum_{i \neq j \in [1, N/\ro] }
 \( \frac{\sin \pi \ro (i-j)}{\pi(i-j)}   \)^2    \log |2\pi \ro (i-j)|= \frac{2}{N} \sum_{u=1}^{N/\ro-1} (N/\ro -u) \(\frac{ \(
\sin \pi \ro u\)^2}{\pi^2 u^2}\)^2\log (2\pi \ro u).\end{equation}
 Again, the term containing $u$ can be neglected since it is bounded by $\frac{1}{N} \sum_{u=1}^{N/\ro} \frac{1}{u} \log (2\pi \ro u)\le O(\frac{\log^2 N}{N})=o(1)$.
 Combining \eqref{Wcsin2}--\eqref{rm2} and letting $N \to \infty$, we finally arrive at \eqref{formdiscsin}.
\end{proof}

The graph of $\lim_{N\to \infty}\E\Wc$ in \eqref{formdiscsin} is presented in Fig. \ref{fig2}, it shows a function decreasing 
from $1-\gamma$ at $\ro =0$ to $0$ at $\ro =1$, as expected.
\begin{figure}\label{fig2}
\begin{center}
\includegraphics[height=7cm]{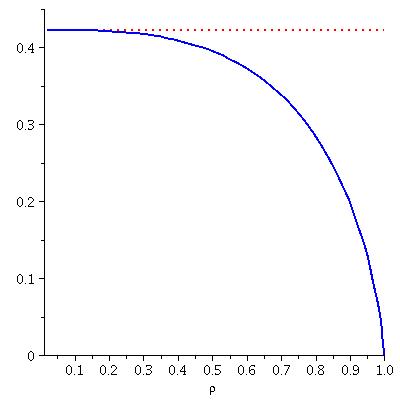}\caption{Numerical evaluation of $\lim_{N\to \infty}\E \Wc$ for the discrete sine process}\end{center}
\end{figure}

\noindent
{\sc Alexei Borodin}\\
Massachusetts Institute of Technology\\
Department of Mathematics\\
77 Massachusetts Avenue, 
Cambridge, MA 02139-4307, USA.\\
{\tt borodin@math.mit.edu}
\\ \\
{\sc Sylvia Serfaty}\\
UPMC Univ  Paris 06, UMR 7598 Laboratoire Jacques-Louis Lions,\\
 Paris, F-75005 France ;\\
 CNRS, UMR 7598 LJLL, Paris, F-75005 France \\
 \&  Courant Institute, New York University\\
251 Mercer st, NY NY 10012, USA\\
{\tt serfaty@ann.jussieu.fr}


\begin{thebibliography}{BKMM}
\bibitem[AGZ]{agz}G. W. Anderson, A. Guionnet, O. Zeitouni, {\it  An introduction to random matrices.} Cambridge University Press,  2010.
\bibitem[BBH]{bbh} F. Bethuel, H. Brezis, F. H\'elein, {\it Ginzburg-Landau Vortices}. Birkh\"auser, 1994.

\bibitem[BKMM]{bkmm} J. Baik, T. Kriecherbauer,  K. T.-R. McLaughlin, P. D. Miller, 
{\it Discrete orthogonal polynomials.
Asymptotics and applications.} Annals of Mathematics Studies \textbf{164}, Princeton University Press, Princeton, NJ, 2007.
\bibitem[B]{Bor11} A. Borodin, {\it Determinantal point processes}, {\tt arXiv:0911.1153.}

\bibitem[BOO]{boo} A. Borodin, A. Okounkov, G. Olshanski,  {\it Asymptotics of Plancherel measures for symmetric groups}, 
 J. Amer. Math. Soc. {\bf  13} (2000), no. 3, 481-515.
\bibitem[DVJ]{dvj} D. J. Daley, D. Verey-Jones, {\it An introduction to the theory of point processes}. Springer, 1988.
\bibitem[Dy]{dyson}F. Dyson, {\it Statistical theory of the energy levels of a complex system}, Part I, {  J. Math. Phys.} {\bf  3}, 140--156
(1962); Part II, ibid. 157--165; Part III, ibid. 166--175
\bibitem[DM]{dysonmehta} F. J. Dyson, M. L.  Mehta,  {\it Statistical theory of the energy levels of complex systems}, Part IV, { J. Mathematical Phys.} {\bf  4} (1963), 701--712.

\bibitem[Ed]{edwards} R. E. Edwards, {\it Functional Analysis}. 1965.

\bibitem[Fa]{faris}
W. G. Faris, {\it Combinatorics and cluster expansions}, {Probability Surveys} {\bf  7}, (2010), 157--206.
\bibitem[Fo]{forrester}  P. J. Forrester, {\it Log-gases and random matrices}. London Mathematical Society Monographs Series \textbf{34},
 Princeton University Press, 2010.

\bibitem[FH]{forresterh} P. J. Forrester, G. Honner,  {\it Exact statistical properties of the zeros of complex random polynomials}, { 
J. Phys. A} {\bf  32} (1999), no. 16, 296--2981. 
\bibitem[GNPS]{gnps} S. Ghosh, F. Nazarov, Y. Peres, M. Sodin, in preparation.

\bibitem[HKPV]{Hou06} J.~B.~Hough, M.~Krishnapur, Y.~Peres and B.~Vir\'ag,
{\it Zeros of Gaussian analytic functions and determinantal point processes}.  University Lecture Series \textbf{51},
 American Mathematical Society, Providence, RI, 2009.
\bibitem[Ja]{jancovici} B.~Jancovici, {\it Exact results for the two-dimensional one-component plasma},
Phys. Rev. Lett. \textbf{46} (1981), 263--280. 
\bibitem[Jo]{Joh05} K.~Johansson, {\it Random matrices and determinantal
processes}, Mathematical statistical physics, 1--55, Elsevier B. V., Amsterdam, 2006.
\bibitem[La]{lang}  S. Lang, {\it Elliptic functions}. Springer-Verlag, New York, 1987.
\bibitem[Le]{lewin} L. Lewin, {\it Polylogarithms and associated functions} North-Holland, 1981.


\bibitem[LL]{ll} E. Lieb, M. Loss, {\it Analysis}. Graduate Studies in Mathematics \textbf{14}, American Mathematical Society, 1997.
\bibitem[K\"on05]{Kon05} W.~K\"onig, {\it Orthogonal polynomial ensembles in probability theory},
Probab. Surveys 2 (2005) 385--447, {\tt arXiv:math/0403090}.
\bibitem[Ly]{Lyo03} R.~Lyons,  {\it Determinantal probability measures},
{ Publ. Math. Inst. Hautes Etudes Sci.} {\bf 98} (2003) 167--212,
{\tt arXiv:math/0204325}.
\bibitem[Me]{mehta}  M. L. Mehta, {\it Random matrices}. Third edition,  Elsevier/Academic Press, 2004.

\bibitem[NS]{ns} F. Nazarov, M.  Sodin, {\it Random complex zeroes and random nodal lines,} 
Proceedings of the International Congress of Mathematicians. Volume III,  (2010),  1450?1484.
\bibitem[PBM1]{pbm1} A. P. Prudnikov, Y. A. Brychkov, O. I. Marichev, {\it Integrals and series, vol.~1: Elementary functions}. Gordon and Breach Sci. Publ., 1986.


\bibitem[PBM2]{pbm2}A. P. Prudnikov, Y. A. Brychkov, O. I. Marichev, {\it Integrals and series, vol.~2: Special functions}. Gordon and Breach Sci. Publ., 1986.
\bibitem[ST]{safftotik}   E. Saff, V. Totik, {\it
Logarithmic potentials with external fields}. Springer-Verlag, 1997.
\bibitem[SS1]{ss1} E. Sandier, S. Serfaty, {\it From the Ginzburg-Landau model to vortex lattice problems}, {\tt arXiv:1011.4617}.
\bibitem[SS2]{ma2d} E. Sandier, S. Serfaty, {\it 2D Coulomb gases and the renormalized energy}, {\tt arXiv:1201.3503}.
\bibitem[SS3]{ma1d} E. Sandier, S. Serfaty, {\it 1D log-gases and the renormalized energy}, in preparation.
\bibitem[SS4]{livre} E. Sandier, S. Serfaty, {\it Vortices in the Magnetic Ginzburg-Landau Model}, Birkh\"auser, 2007.

\bibitem[Sch]{sch} L. Schwartz,  {\it Th\'eorie des distributions, tome 2}. Hermann, 1951.

\bibitem[So1]{Sos00} A.~Soshnikov,  {\it Determinantal random point fields},
{ Russian Math. Surveys} {\bf 55} (2000) 923--975, {\tt arXiv: math/0002099}.
\bibitem[So2]{Sos06} A.~Soshnikov, {\it Determinantal Random Fields}, in:
Encyclopedia of Mathematical Physics, pp. 47--53, Oxford: Elsevier,
2006.
\bibitem[TSZ]{t} S. Torquato, A. Scardicchio,  C. E. Zachary, {\it  Point processes in arbitrary
dimension from fermionic gases, random matrix theory, and number theory,}  J. Stat.
Mech. Theory Exp. {\bf 11}  (2008), P11019.

\bibitem[TW]{tw}C. Tracy, H. Widom, {\it Correlation Functions,
 Cluster Functions, and Spacing Distributions for Random Matrices}, {J. Stat. Phys.} {\bf 92} (1998), No 5-6, 809--835.
\bibitem[VV]{VV} B.~Valk\'o, B.~Vir\'ag,  {\it Continuum limits of random matrices and the Brownian carousel},
 Invent. Math. {\bf 177} (2009), no. 3, 463--508.

\end{thebibliography}
\end{document}